\numberwithin{theorem}{section}
\crefname{assumption}{Assumption}{Assumptions}
\crefname{remark}{Remark}{Remarks}
\crefname{example}{Example}{Examples}
\title{Auxiliary space theory for the analysis of\\ iterative methods for semidefinite linear systems\thanks{Submitted to arXiv.
\funding{This work was supported by the KAUST Baseline Research Fund.}
}}
\author{
Jongho Park\thanks{Applied Mathematics and Computational Sciences Program, Computer, Electrical and Mathematical Science and Engineering Division, King Abdullah University of Science and Technology~(KAUST), Thuwal 23955, Saudi Arabia
 (\email{jongho.park@kaust.edu.sa}, \email{jinchao.xu@kaust.edu.sa}).}
 \and
Jinchao Xu\footnotemark[2]
}
\begin{document}

\maketitle
\begin{abstract}
We present an auxiliary space theory that provides a unified framework for analyzing various iterative methods for solving linear systems that may be semidefinite. 
By interpreting a given iterative method for the original system as an equivalent, yet more elementary, iterative method for an auxiliary system defined on a larger space, we derive sharp convergence estimates using elementary linear algebra.  
In particular, we establish identities for the error propagation operator and the condition number associated with iterative methods, which generalize and refine existing results.  
The proposed auxiliary space theory is applicable to the analysis of numerous advanced numerical methods in scientific computing.  
To illustrate its utility, we present three examples---subspace correction methods, Hiptmair--Xu preconditioners, and auxiliary grid methods---and demonstrate how the proposed theory yields refined analyses for these cases.
\end{abstract}

\begin{keywords}
Auxiliary spaces, Iterative methods, Semidefinite linear systems, Subspace correction methods, Hiptmair--Xu preconditioners, Auxiliary grid methods
\end{keywords}

\begin{AMS}
65F08,  
65F10,  
65J05,  
65N55   
\end{AMS}

\section{Introduction}
\label{Sec:Introduction}
This paper concerns iterative methods for solving the following general linear system defined on a finite-dimensional vector space~$V$:
\begin{equation}
\label{model}
A u = f,
\end{equation}
where $A \colon V \to V$ is a symmetric positive semidefinite~(semi-SPD) linear operator, and $f$ belongs to the range of $A$.
We denote by $u$ a solution of~\eqref{model}.
More precisely, we propose an auxiliary space theory, based on the idea of the auxiliary space method~\cite{Xu:1996}, which provides a unified framework for analyzing a broad class of iterative methods for solving linear systems of the form~\eqref{model}.

The auxiliary space method, proposed in~\cite{Xu:1996}, is a framework for designing iterative methods for linear systems based on an auxiliary space.
This approach has led to the development of several notable numerical methods. 
For example, in~\cite{GWX:2016,Xu:1996,ZX:2014}, efficient multigrid methods for problems defined on unstructured grids were developed by using more structured auxiliary spaces.
In~\cite{HX:2007,HX:2008}, preconditioners for problems in $H(\operatorname{curl})$ and $H(\operatorname{div})$, now known as the Hiptmair--Xu preconditioners, were developed based on the auxiliary space method.
In these preconditioners, a vector equation in three dimensions is reduced to solving four scalar Poisson equations, which are significantly easier to handle.
Other relevant works include~\cite{KV:2009,KV:2012}.
There have also been efforts to extend the idea of the Hiptmair--Xu preconditioners to broader settings, such as the de Rham complex and exterior calculus~\cite{BBH:2020,GNV:2018}.
Additional applications of the auxiliary space method can be found in~\cite{CHH:2018,CWWY:2015,HXZ:2013,LX:2016}.

Beyond its role in designing efficient iterative methods, the auxiliary space method has also been utilized from a theoretical perspective.
In~\cite{Chen:2011}, subspace correction methods~\cite{Xu:1992}, which provide a unified framework for various iterative methods such as domain decomposition and multigrid methods (see, e.g.,~\cite{TW:2005}), were analyzed within the framework of the auxiliary space method.
In particular, the Xu--Zikatanov identity~\cite{XZ:2002}, which gives a sharp estimate for the convergence rate of the successive subspace correction method~(see also~\cite{Brenner:2013,CXZ:2008}), was elegantly proven using the auxiliary space method.
In~\cite{XZ:2017}, the auxiliary space method was used to provide a unified understanding of various algebraic multigrid methods.

In this paper, we focus on the theoretical aspect of the auxiliary space method.
We consider the general iterative method
\begin{equation}
\label{iterative}
u^{m+1} = u^{m} + B(f - Au^{m}),
\quad m \geq 0,
\end{equation}
where $B \colon V \to V$ is a linear operator.
Specifically, we introduce an auxiliary space theory that generalizes existing frameworks~\cite{Nepomnyaschikh:1992,Xu:1992,XZ:2017} and naturally accommodates semidefinite problems.
While the analysis of iterative methods for solving semidefinite linear systems has been extensively studied in the literature (e.g.,~\cite{Cao:2008,LWXZ:2006,LWXZ:2008,WLXZ:2008}), often relying on specialized techniques to address singularities, our approach addresses such cases in a uniform and elementary manner, without invoking advanced mathematical tools.
Moreover, we provide sharp estimates for the error propagation operator and the condition number associated with iterative methods, in the sense that we derive exact identities for them (cf.~\cite{Brenner:2013,XZ:2002}).
An important corollary of our results is a sharp convergence analysis for general iterative methods, as well as for the preconditioned conjugate gradient method applied to semidefinite problems.
We note that a sharp theory for general iterative methods in symmetric positive definite~(SPD) settings was developed in~\cite{XZ:2017}, and the conjugate gradient method for singular problems has been studied in~\cite{Hayami:2018,Kaasschieter:1988}.

As applications of the auxiliary space theory developed in this work, we revisit the convergence analysis of subspace correction methods~\cite{Xu:1992,XZ:2002}, the Hiptmair--Xu preconditioners~\cite{HX:2007,HX:2008}, and auxiliary grid methods~\cite{GWX:2016,Xu:1996,ZX:2014}.
All of these are analyzed in a unified and more refined manner than in existing literature, using the auxiliary space theory.
In particular, our framework provides a natural and straightforward derivation of the Xu--Zikatanov identity for semidefinite problems~\cite{LWXZ:2008,WLXZ:2008}.

The remainder of this paper is organized as follows.  
In \cref{Sec:Iterative}, we summarize the abstract theory of iterative methods for SPD linear systems.  
In \cref{Sec:Aux}, we develop an auxiliary space theory for SPD systems, including equivalences among iterative methods and analyses of error propagation operators and condition numbers.  
In \cref{Sec:Iterative_singular}, we obtain an abstract theory of iterative methods for semi-SPD linear systems as a consequence of the auxiliary space theory.
In \cref{Sec:Aux_singular}, we present the most general result, namely an auxiliary space theory for semi-SPD linear systems.  
In \cref{Sec:MSC}, we apply the auxiliary space theory to establish convergence of subspace correction methods for semi-SPD systems.  
In \cref{Sec:HX,Sec:Grid}, we reinterpret and analyze the Hiptmair--Xu preconditioners and auxiliary grid methods within the framework of the proposed auxiliary space theory.  
Finally, in \cref{Sec:Conclusion}, we conclude the paper.

\subsection{Preliminaries}
We summarize the notation used throughout this paper. 
Let $V$ be a finite-dimensional vector space equipped with an inner product $(\cdot, \cdot)$ and the induced norm $\| \cdot \|$. 
For any subspace $W \subset V$, we denote by $W^\perp$ its orthogonal complement. 
For a linear operator $T \colon V \to V$, the null space and range are denoted by $\mathcal{N}(T)$ and $\mathcal{R}(T)$, respectively. 
The adjoint of $T$ is denoted by $T^t \colon V \to V$. 

A symmetric linear operator $T \colon V \to V$ is called SPD if
\begin{equation}
\label{SPD}
(Tv, v) > 0, \quad v \in V \setminus \{ 0 \}.
\end{equation}
It is called semi-SPD if the strict inequality $>$ in~\eqref{SPD} is replaced with $\geq$.  
Equivalently,
\begin{equation*}
(Tv, v) > 0, \quad v \in \mathcal{R}(T) \setminus \{ 0 \}.
\end{equation*}


A semi-SPD linear operator $T \colon V \to V$ induces a bilinear form and an associated seminorm on $V$ given by
\[
(v, w)_T = (Tv, w), \quad |v|_T = (Tv, v)^{1/2}, \quad v, w \in V.
\]
For a linear operator $S \colon V \to V$, its operator seminorm is defined by
\begin{equation}
\label{operator_seminorm}
|S|_T = \sup_{v \in V,\ |v|_T = 1} |Sv|_T.
\end{equation}
If $T$ is SPD, then $| \cdot |_T$ is a norm on $V$, denoted by $\| \cdot \|_T$.\

Following standard convention in the literature (see, e.g.,~\cite{Xu:1992}), we use the notation $x \lesssim y$ and $y \gtrsim x$ to indicate that there exists a constant $C > 0$, independent of important parameters, such that $x \leq C y$.
We also write $x \eqsim y$ when $x \lesssim y$ and $x \gtrsim y$.

\section{Iterative methods for definite linear systems}
\label{Sec:Iterative}
For completeness, we summarize the abstract theory of iterative methods for SPD linear systems introduced in~\cite[Section~4]{XZ:2017}.
In the linear system~\eqref{model}, we assume that $A$ is SPD.
The notion of convergence for general iterative methods is defined as follows.

\begin{definition}
\label{Def:convergence}
An iterative method $\{ u^m \}$ for solving the SPD linear system~\eqref{model} is said to be convergent if $u^m$ converges to $u$ for any initial guess $u^0 \in V$.
\end{definition}

We see that the iterative method~\eqref{iterative} satisfies the following error propagation:
\begin{equation}
\label{error_propagation}
u - u^{m+1} = (I - BA)(u - u^m), \quad m \geq 0.
\end{equation}
From~\eqref{error_propagation}, we obtain the following simple criterion for the convergence of~\eqref{iterative}.

\begin{lemma}
\label{Lem:spectral_radius}
In the linear system~\eqref{model}, suppose that $A$ is SPD.
Then the iterative method~\eqref{iterative} converges if and only if
\begin{equation*}
\rho(I - BA) < 1,
\end{equation*}
where $\rho(\cdot)$ denotes the spectral radius.
\end{lemma}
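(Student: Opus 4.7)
The plan is to reduce the convergence question to the behavior of powers of the error propagation operator $E := I - BA$, and then invoke the standard linear-algebra fact that $E^m \to 0$ precisely when $\rho(E)<1$.

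First, I would iterate the identity~\eqref{error_propagation} to obtain
\[
u - u^m = E^m (u - u^0), \quad m \geq 0.
\]
Thus convergence of $\{u^m\}$ for \emph{every} initial guess $u^0 \in V$ is equivalent to $E^m v \to 0$ for every $v \in V$, i.e.\ to the operator-norm convergence $E^m \to 0$ as $m \to \infty$.

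Next, I would establish the equivalence $E^m \to 0 \iff \rho(E) < 1$. For the sufficient direction, passing to a Jordan normal form $E = P J P^{-1}$ gives $E^m = P J^m P^{-1}$, and each Jordan block associated with an eigenvalue $\lambda$ satisfies $\|J_k^m\| \to 0$ whenever $|\lambda| < 1$, since the entries of $J_k^m$ are polynomial-times-$\lambda^{m-j}$ expressions. Hence $\rho(E)<1$ forces $E^m \to 0$. For the necessary direction, suppose $\rho(E) \geq 1$ and pick an eigenvalue $\lambda$ of $E$ with $|\lambda|\geq 1$. If $\lambda$ is real, any nonzero real eigenvector $v$ yields $E^m v = \lambda^m v \not\to 0$; if $\lambda$ is non-real, writing the corresponding complex eigenvector as $v = v_1 + i v_2$ with $v_1, v_2 \in V$ and applying $E^m$ to $v_1$ and $v_2$ separately shows that at least one of the real sequences $\{E^m v_1\}$, $\{E^m v_2\}$ fails to tend to zero (since $E^m v = \lambda^m(v_1+iv_2)$ and $|\lambda^m| \geq 1$). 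In either case, choosing $u^0 = u - v_1$ (or $u - v_2$) yields a nonconvergent iteration, contradicting convergence.

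Combining the two steps, the iteration converges for every initial guess if and only if $E^m \to 0$, if and only if $\rho(I - BA) < 1$. The only mildly delicate point is the handling of complex eigenvalues in the necessity direction, since $I - BA$ need not be symmetric with respect to $(\cdot,\cdot)$ (even when $A$ and $B$ are SPD, their product need not be), so one cannot simply appeal to an orthonormal eigenbasis; the Jordan-form argument or, equivalently, Gelfand's spectral-radius formula $\lim_m \|E^m\|^{1/m} = \rho(E)$ handles this uniformly.
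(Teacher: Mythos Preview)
Your argument is correct: iterating the error propagation identity yields $u-u^m=(I-BA)^m(u-u^0)$, and the standard Jordan-form (or Gelfand) characterization gives $(I-BA)^m\to 0$ if and only if $\rho(I-BA)<1$. The paper states this lemma without proof, treating it as a well-known consequence of~\eqref{error_propagation}, so your explicit write-up is entirely in line with what the authors have in mind.
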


To analyze the convergence rate of the iterative method~\eqref{iterative}, it is convenient to consider the following symmetrized scheme:
\begin{equation}
\label{iterative_sym} 
\begin{aligned}
u^{m+\frac{1}{2}} &= u^{m} + B(f - Au^{m}),\\
u^{m+1} &= u^{m+\frac{1}{2}} + B^t(f - Au^{m+\frac{1}{2}}),
\end{aligned}
\quad m \geq 0.
\end{equation}
The error propagation for~\eqref{iterative_sym} reads
\begin{equation*}
u - u^{m+1} = (I - B^t A)(I - B A)(u - u^{m}) = (I - \bar{B} A)(u - u^m),
\quad m \geq 0,
\end{equation*}
where the symmetrized operator $\bar{B} \colon V \to V$ is defined by
\begin{equation}
\label{symmetrized}
\bar{B} = B + B^t - B^t A B.
\end{equation}
The following theorem characterizes the convergence rate of~\eqref{iterative} in terms of the symmetrized operator $\bar{B}$~\cite[Theorem~4.1]{XZ:2017}.

\begin{theorem}
\label{Thm:error_propagation}
In the linear system~\eqref{model}, suppose that $A$ is SPD.
Then the following are equivalent:
\begin{enumerate}[(i)]
\item The symmetrized iterative method~\eqref{iterative_sym} converges.
\item The symmetrized operator $\bar{B}$ given in~\eqref{symmetrized} is SPD.
\end{enumerate}
Furthermore, if either of the above conditions holds, then the iterative method~\eqref{iterative} converges, and it satisfies
\begin{equation*}
\|I - B A\|_A^2 =
1 - \left( \sup_{v \in V,\ \|v\|_A = 1} ( \bar{B}^{-1} v, v ) \right)^{-1}.
\end{equation*}
\end{theorem}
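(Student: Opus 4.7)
The plan is to reduce everything to the analysis of the $A$-self-adjoint operator $I - \bar{B}A$. The first step is a direct algebraic expansion verifying the factorization $(I - B^{t} A)(I - BA) = I - \bar{B}A$; together with the routine identification of $I - B^{t}A$ as the $A$-adjoint of $I - BA$ (since the $A$-adjoint of $BA$ is $A^{-1}(BA)^{t}A = B^{t}A$), this gives $I - \bar{B}A = S^{*} S$, where $S := I - BA$ and $*$ denotes the $A$-adjoint. Two immediate consequences drive the rest of the proof: $I - \bar{B}A$ is $A$-self-adjoint, and $((I - \bar{B}A)v, v)_{A} = \|Sv\|_{A}^{2} \geq 0$ for every $v$, so the $A$-spectrum of $I - \bar{B}A$ lies in $[0, \infty)$ and hence that of $\bar{B}A$ lies in $(-\infty, 1]$.

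For the equivalence (i) $\Leftrightarrow$ (ii), I would apply \cref{Lem:spectral_radius} to the symmetrized scheme, whose error propagation operator is $I - \bar{B}A$. Convergence is equivalent to $\rho(I - \bar{B}A) < 1$, which, combined with the nonnegativity of its spectrum established above, reduces to requiring every $A$-eigenvalue of $\bar{B}A$ to be strictly positive. Since $A$ is SPD, $\bar{B}A$ is similar via $A^{1/2}$ to the symmetric operator $A^{1/2} \bar{B} A^{1/2}$, so its $A$-spectrum is positive precisely when $\bar{B}$ is SPD, yielding the equivalence.

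For the identity, I would chain three elementary reductions. First, $\|S\|_{A}^{2} = \|S^{*}S\|_{A} = \|I - \bar{B}A\|_{A}$. Second, since $I - \bar{B}A$ is $A$-self-adjoint with $A$-spectrum in $[0,1)$ under (ii), $\|I - \bar{B}A\|_{A} = 1 - \lambda_{\min}^{A}(\bar{B}A)$, where $\lambda_{\min}^{A}$ denotes the smallest $A$-eigenvalue. Third, invert: $\lambda_{\min}^{A}(\bar{B}A) = 1/\lambda_{\max}^{A}((\bar{B}A)^{-1})$, and rewrite the resulting $A$-Rayleigh quotient using $((\bar{B}A)^{-1}v, v)_{A} = (A \cdot A^{-1}\bar{B}^{-1}v, v) = (\bar{B}^{-1}v, v)$ to obtain $\sup_{\|v\|_{A} = 1}(\bar{B}^{-1}v, v)$. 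Convergence of the original iteration~\eqref{iterative} then follows at once from $\|I - BA\|_{A} < 1$, since the supremum in the identity is a finite positive number under (ii).

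The step I expect to require the most care is this final conversion of an $A$-Rayleigh quotient for $(\bar{B}A)^{-1}$ into one written purely in terms of $(\bar{B}^{-1}v, v)$ and $\|v\|_{A}$: the manipulation is short but rests on careful bookkeeping between the two inner products $(\cdot,\cdot)$ and $(\cdot,\cdot)_{A}$ and requires invertibility of both $A$ and $\bar{B}$, so the SPD hypothesis on $\bar{B}$ from (ii) is used precisely here. The other mildly delicate point is making sure the spectrum argument in the equivalence handles the boundary value $0$ correctly, which is what forces the strict inequality in $\rho(I - \bar{B}A) < 1$ to correspond exactly to the strict positivity in the definition of SPD.
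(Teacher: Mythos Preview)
Your argument is correct. Note, however, that the paper does not actually prove \cref{Thm:error_propagation}: it is quoted from~\cite[Theorem~4.1]{XZ:2017} as part of the background summary in \cref{Sec:Iterative}, so there is no in-paper proof to compare against. Your approach---identifying $I-\bar{B}A=(I-BA)^{*}(I-BA)$ in the $A$-inner product, reading off nonnegativity of its $A$-spectrum, and then translating the $A$-Rayleigh quotient of $(\bar{B}A)^{-1}$ into $(\bar{B}^{-1}v,v)$---is the standard route and matches what one finds in the cited reference.
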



One straightforward corollary of \cref{Thm:error_propagation} is the invertibility of $B$, as summarized in \cref{Cor:iterative}.

\begin{corollary}
\label{Cor:iterative}
In the linear system~\eqref{model}, suppose that $A$ is SPD.
If the symmetrized operator $\bar{B}$ defined in~\eqref{symmetrized} is SPD, then $B$ is nonsingular.
\end{corollary}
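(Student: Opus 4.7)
The plan is to show directly that $\mathcal{N}(B) = \{0\}$, which in the finite-dimensional setting is equivalent to the nonsingularity of $B$. The guiding observation is that the quadratic form associated with $\bar{B} = B + B^t - B^t A B$ degenerates on any null vector of $B$: all three terms in the defining identity vanish when tested against a vector that is killed by $B$.

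Concretely, I will pick an arbitrary $v \in \mathcal{N}(B)$ and compute $(\bar{B}v, v)$. Using $Bv = 0$, the first term $(Bv, v)$ is zero, the second term $(B^tv, v) = (v, Bv)$ is zero by definition of the adjoint, and the third term $(B^t A B v, v) = (ABv, Bv)$ is zero because the argument of $A$ is zero. Hence $(\bar{B}v, v) = 0$, and the SPD hypothesis on $\bar{B}$ (together with the strict inequality in~\eqref{SPD}) forces $v = 0$. This yields $\mathcal{N}(B) = \{0\}$ and completes the proof.

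There is essentially no obstacle here: the argument is a three-line calculation once the right test vector is selected. The conceptual content is that the quadratic form of $\bar{B}$ at $v$ may be rewritten as $2(Bv, v) - |Bv|_A^2$, so its strict positivity on $V \setminus \{0\}$ is incompatible with $v$ lying in the null space of $B$. An alternative route would be to invoke \cref{Thm:error_propagation}: the symmetrized iterative method converges, so $\|I - BA\|_A < 1$, which implies that $BA$ has no zero eigenvalue and is therefore invertible; since $A$ is SPD and hence invertible, $B$ must be invertible as well. The direct quadratic-form calculation is, however, shorter and makes no appeal to the convergence theory.
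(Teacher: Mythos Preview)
Your direct argument is correct. The paper does not spell out a proof but presents the result as ``a straightforward corollary of \cref{Thm:error_propagation}'', i.e., the alternative route you yourself sketch at the end: if $\bar{B}$ is SPD then $\|I-BA\|_A<1$, so $BA$ is nonsingular, and since $A$ is SPD, $B$ is nonsingular.

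Your primary argument bypasses the convergence theory entirely and works at the level of the quadratic form, needing only the definition~\eqref{symmetrized} and the SPD hypothesis. This is shorter and more self-contained than the route the paper indicates, and in fact shows slightly more: it does not use that $A$ is SPD (or even semi-SPD), only that $A$ is symmetric so that $\bar{B}$ is symmetric. The paper's route, on the other hand, ties the invertibility of $B$ back to the convergence statement it is used alongside, which is conceptually natural in context. Either way the content is minimal; your proof is fine.
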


When both linear operators $A$ and $B$ are SPD, the $B$-preconditioned conjugate gradient method can be applied in place of the iterative method~\eqref{iterative} to solve~\eqref{model}, which is given as follows.  
\begin{equation}
\label{PCG}
\begin{aligned}
&\text{Given $u^0 \in V$, set $r^0 = f - A u^0$ and $p^0 = B r^0$. For $m \geq 0$:} \\
&\alpha_m = \frac{(r^m, r^m)}{(A p^m, p^m)}, \quad
u^{m+1} = u^m + \alpha_m p^m, \quad
r^{m+1} = r^m - \alpha_m A p^m, \\
&\beta_m = \frac{(B r^{m+1}, r^{m+1})}{(B r^m, r^m)}, \quad
p^{m+1} = B r^{m+1} + \beta_m p^m.
\end{aligned}
\end{equation}
It is well known (see, e.g.,~\cite{Saad:2003}) that~\eqref{PCG} satisfies
\begin{equation}
\label{PCG_estimate}
\| u - u^m \|_A \leq 2 \left( \frac{\sqrt{\kappa(BA)} - 1}{\sqrt{\kappa(BA)} + 1} \right)^m \| u - u^0 \|_A,
\quad m \geq 0,
\end{equation}
where the condition number $\kappa(BA)$ is given by
\begin{equation*}
\kappa(BA) = \frac{\lambda_{\max}(BA)}{\lambda_{\min}(BA)},
\end{equation*}
and $\lambda_{\min}$ and $\lambda_{\max}$ denote the minimum and maximum eigenvalues, respectively.  
Hence, in this case, it suffices to estimate the extremal eigenvalues of $BA$; see \cref{Thm:condition_number}.

\begin{theorem}
\label{Thm:condition_number}
In the iterative method~\eqref{iterative}, suppose that $A$ and $B$ are SPD.
Then we have
\begin{subequations}
\label{kappa}
\small
\begin{align}
\label{kappa_lambda_min}
\lambda_{\min}(B A)
&= \inf_{v \in V,\ \| v \|_{A} = 1} (B A v, v)_{A}
= \left( \sup_{v \in V,\ \| v \|_A = 1 } (B^{-1} v, v) \right)^{-1}, \\
\label{kappa_lambda_max}
\lambda_{\max}(B A)
&= \sup_{v \in V,\ \| v \|_{A} = 1} (B A v, v)_{A}
= \left( \inf_{v \in V,\ \| v \|_A = 1 } (B^{-1} v, v) \right)^{-1}.
\end{align}
\end{subequations}
\end{theorem}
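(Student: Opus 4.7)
The plan is to exploit that, when $A$ and $B$ are both SPD, the composition $BA$ is self-adjoint with respect to the $A$-inner product, so the ordinary Rayleigh--Ritz theorem applies in that geometry. The two chains of equalities in~\eqref{kappa} then split naturally into a spectral-characterization step and a change-of-variables step, and neither requires machinery beyond what has already been developed in the excerpt.

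First I would verify by direct calculation that $(BAv, w)_A = (v, BAw)_A$ for all $v,w \in V$, using only $A^t = A$ and $B^t = B$. Combined with $(BAv,v)_A = (B(Av), Av) > 0$ for $v \ne 0$ (since $B$ is SPD and $Av \ne 0$), this shows that $BA$ is $A$-SPD. The spectral theorem in the $A$-inner product then yields
\begin{equation*}
\lambda_{\min}(BA) = \inf_{v \ne 0}\frac{(BAv,v)_A}{(v,v)_A},
\qquad
\lambda_{\max}(BA) = \sup_{v \ne 0}\frac{(BAv,v)_A}{(v,v)_A},
\end{equation*}
and by homogeneity these give the first equality on each line of~\eqref{kappa}.

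For the second equality, I would observe that $\lambda$ is an eigenvalue of $BA$ if and only if $Av = \lambda B^{-1} v$, so the extremal eigenvalues of $BA$ coincide with the supremum and infimum of the generalized Rayleigh quotient $(Av, v)/(B^{-1}v, v)$. Imposing the normalization $\|v\|_A = (Av,v)^{1/2} = 1$ then converts these extrema into the inverse-inf and inverse-sup expressions in~\eqref{kappa_lambda_min} and~\eqref{kappa_lambda_max}. To justify the generalized Rayleigh quotient identity rigorously, I would pass to the operator $B^{1/2} A B^{1/2}$, which is SPD in the standard inner product and similar to $BA$ (hence has the same eigenvalues); under the substitution $v \mapsto B^{1/2}v$, its standard Rayleigh quotient transforms exactly into $(Av, v)/(B^{-1}v, v)$.

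The main obstacle is essentially bookkeeping: one must keep track of the fact that the first equality is a Rayleigh quotient statement in the $A$-inner product, while the second is derived from a Rayleigh quotient in the standard inner product, and verify that the substitution $v \mapsto B^{1/2}v$ is compatible with both normalizations. Beyond this, no delicate estimates, no new identities, and no additional assumptions are required; the proof should be short and self-contained.
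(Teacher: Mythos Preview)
Your proposal is correct. The paper states \cref{Thm:condition_number} without proof, presenting it as part of the summary of known results from~\cite[Section~4]{XZ:2017}, so there is no paper proof to compare against; your argument via the $A$-self-adjointness of $BA$ and the similarity $BA \sim B^{1/2}AB^{1/2}$ is the standard route and is entirely adequate here.
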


\section{Auxiliary space theory for definite linear systems}
\label{Sec:Aux}
In this section, we present an abstract auxiliary space theory for SPD linear systems. 
Given an iterative method for solving a linear system, we construct an equivalent iterative method on an auxiliary space. 
This equivalence allows us to estimate the convergence rate and the condition number of the original method in terms of properties of the auxiliary space. 
Parts of this section have been discussed in~\cite[Section~2]{Chen:2011} and~\cite[Section~6]{XZ:2017}.

We begin by presenting \cref{Lem:aux_space_lemma}, referred to as the \textit{auxiliary space lemma}.
The idea of this lemma can be traced back to~\cite{Xu:1992}.
This result is closely related to the fictitious space method~\cite{Nepomnyaschikh:1992} and the auxiliary space method~\cite{Xu:1996}, and generalizes the additive version introduced in~\cite[Lemma~2.4]{XZ:2002}. 
It also appears in~\cite{ABMXZ:2014,Chen:2011}.

\begin{lemma}[auxiliary space lemma]
\label{Lem:aux_space_lemma}
Let $V$ and $\undertilde{V}$ be finite-dimensional vector spaces and let $\Pi \colon \undertilde{V} \to V$ be a surjective linear operator.
Let $\undertilde{B} \colon \undertilde{V} \to \undertilde{V}$ be a SPD linear operator, and define
\begin{equation*}
B = \Pi \undertilde{B} \Pi^t \colon V \to V.
\end{equation*}
Then $B$ is SPD.
Moreover, it satisfies
\begin{equation*}
( B^{-1} v, v ) = \inf_{\undertilde{v} \in \undertilde{V},\ \Pi \undertilde{v} = v } ( \undertilde{B}^{-1} \undertilde{v}, \undertilde{v} ),
\quad v \in V.
\end{equation*}
\end{lemma}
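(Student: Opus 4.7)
The plan is to handle the two claims separately: first that $B$ is SPD, then the Rayleigh-type identity for $B^{-1}$, which I will obtain by exhibiting an explicit minimizer and using a two-term orthogonality decomposition.

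For the SPD claim, I would first note symmetry by a direct adjoint computation, $B^t = (\Pi \undertilde{B} \Pi^t)^t = \Pi \undertilde{B}^t \Pi^t = B$. Positivity then comes from $(Bv,v) = (\undertilde{B} \Pi^t v, \Pi^t v) \geq 0$, with equality forcing $\Pi^t v = 0$. The surjectivity of $\Pi$ is used here precisely once: since $\mathcal{R}(\Pi) = V$, the elementary identity $\mathcal{N}(\Pi^t) = \mathcal{R}(\Pi)^\perp$ yields $\mathcal{N}(\Pi^t) = \{0\}$, so $\Pi^t v = 0$ implies $v = 0$. In particular $B$ is invertible, which is required for the second statement to even make sense.

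For the identity, I would guess the minimizer. Formally the constrained problem $\min (\undertilde{B}^{-1} \undertilde{v}, \undertilde{v})$ subject to $\Pi \undertilde{v} = v$ has Lagrange stationarity $\undertilde{v} = \undertilde{B} \Pi^t \lambda$, and the constraint then gives $B \lambda = v$, so the natural candidate is
\begin{equation*}
\undertilde{w} = \undertilde{B} \Pi^t B^{-1} v.
\end{equation*}
A direct check shows $\Pi \undertilde{w} = B B^{-1} v = v$, so $\undertilde{w}$ is feasible, and
\begin{equation*}
(\undertilde{B}^{-1} \undertilde{w}, \undertilde{w}) = (\Pi^t B^{-1} v, \undertilde{B} \Pi^t B^{-1} v) = (B^{-1} v, v).
\end{equation*}
Hence the right-hand side is at most the left.

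The main (and only slightly substantive) step is the reverse inequality, and the key is to observe that $\undertilde{w}$ lies in the $\undertilde{B}^{-1}$-orthogonal complement of $\mathcal{N}(\Pi)$. Indeed, for any $\undertilde{e} \in \mathcal{N}(\Pi)$,
\begin{equation*}
(\undertilde{B}^{-1} \undertilde{w}, \undertilde{e}) = (\Pi^t B^{-1} v, \undertilde{e}) = (B^{-1} v, \Pi \undertilde{e}) = 0.
\end{equation*}
For an arbitrary feasible $\undertilde{v}$, write $\undertilde{v} = \undertilde{w} + \undertilde{e}$ with $\undertilde{e} = \undertilde{v} - \undertilde{w} \in \mathcal{N}(\Pi)$. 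Expanding and using the orthogonality above gives
\begin{equation*}
(\undertilde{B}^{-1} \undertilde{v}, \undertilde{v}) = (\undertilde{B}^{-1} \undertilde{w}, \undertilde{w}) + (\undertilde{B}^{-1} \undertilde{e}, \undertilde{e}) \geq (B^{-1} v, v),
\end{equation*}
since $\undertilde{B}^{-1}$ is SPD. Taking the infimum over feasible $\undertilde{v}$ completes the proof. The only thing to watch is that the infimum is actually attained (by $\undertilde{w}$), so it may be written as a minimum; beyond that, the argument uses only elementary linear algebra and the surjectivity of $\Pi$ in exactly one place (invertibility of $B$).
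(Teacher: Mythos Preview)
Your proof is correct and follows the standard route: exhibit the explicit candidate $\undertilde{w} = \undertilde{B}\Pi^t B^{-1}v$, verify feasibility and the value $(B^{-1}v,v)$, then use $\undertilde{B}^{-1}$-orthogonality of $\undertilde{w}$ to $\mathcal{N}(\Pi)$ to get the Pythagorean lower bound. The paper does not actually supply a proof of this lemma, instead citing it as known from the literature (Xu 1992, Nepomnyaschikh 1992, Xu--Zikatanov 2002, Chen 2011); your argument is precisely the classical one found in those references, so there is no substantive difference to discuss.
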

\subsection{Auxiliary system}
We consider the linear system~\eqref{model} defined on the finite-dimensional vector space $V$, where the linear operator $A$ is SPD.
Let \( \undertilde{V} \) be another finite-dimensional vector space, referred to as the \textit{auxiliary space}, and let \( \Pi \colon \undertilde{V} \to V \) be a surjective linear operator.  
The adjoint \( \Pi^t \colon V \to \undertilde{V} \) is  then injective.  
We now consider the following linear system, referred to as the \textit{auxiliary system}, defined on the auxiliary space \( \undertilde{V} \):
\begin{equation}
\label{model_aux}
\undertilde{A} \undertilde{u} = \undertilde{f},
\quad \text{where} \quad
\undertilde{A} = \Pi^t A \Pi \colon \undertilde{V} \to \undertilde{V},\
\undertilde{f} = \Pi^t f \in \undertilde{V}.
\end{equation}
We denote by $\undertilde{u} \in \undertilde{V}$ a solution to~\eqref{model_aux}.
Note that $\undertilde{A}$ is semi-SPD with $\mathcal{N} (\undertilde{A}) = \mathcal{N} (\Pi)$ and $\mathcal{R} (\undertilde{A}) = \mathcal{R} (\Pi^t)$.
The following proposition establishes the equivalence between the two linear systems~\eqref{model} and~\eqref{model_aux}.

\begin{proposition}
\label{Prop:model_equiv}
In the linear system~\eqref{model}, suppose that $A$ is SPD.
The two linear systems~\eqref{model} and~\eqref{model_aux} are equivalent in the following sense:
For $u \in V$ and $\undertilde{u} \in \undertilde{V}$, if $u$ solves~\eqref{model} and $\Pi \undertilde{u} = u$, then $\undertilde{u}$ solves~\eqref{model_aux}.
Conversely, if $\undertilde{u}$ solves~\eqref{model_aux} and $\Pi \undertilde{u} = u$, then $u$ solves~\eqref{model}.
\end{proposition}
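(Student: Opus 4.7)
The plan is to verify both implications by direct algebraic substitution, with the only nontrivial ingredient being the injectivity of $\Pi^t$ guaranteed by the surjectivity of $\Pi$. No spectral or variational properties of $A$ are actually needed beyond the definitions involved in~\eqref{model_aux}.

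For the forward direction, I would start from $Au = f$ together with $\Pi\undertilde{u} = u$ and unfold the definition $\undertilde{A} = \Pi^t A \Pi$: applying $\Pi^t A$ to $\Pi\undertilde{u}$ gives
\begin{equation*}
\undertilde{A}\undertilde{u} = \Pi^t A \Pi \undertilde{u} = \Pi^t A u = \Pi^t f = \undertilde{f},
\end{equation*}
which is precisely the statement that $\undertilde{u}$ solves~\eqref{model_aux}. This step uses only the definitions of $\undertilde{A}$ and $\undertilde{f}$.

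For the reverse direction, I would begin from $\undertilde{A}\undertilde{u} = \undertilde{f}$ and $\Pi\undertilde{u} = u$ and run the same chain in reverse to obtain $\Pi^t A u = \Pi^t A \Pi \undertilde{u} = \undertilde{A}\undertilde{u} = \undertilde{f} = \Pi^t f$, hence $\Pi^t (Au - f) = 0$. Here I would invoke the observation recorded immediately before the proposition: surjectivity of $\Pi$ implies that $\Pi^t$ is injective, so $\mathcal{N}(\Pi^t) = \{0\}$, and therefore $Au = f$. The main conceptual point to emphasize, rather than any genuine obstacle, is that the whole equivalence hinges on this injectivity; the SPD hypothesis on $A$ plays no role in the proof itself and is only retained to match the setting of the surrounding section.
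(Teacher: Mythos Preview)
Your proof is correct and matches the paper's approach exactly: the paper's entire proof is the single sentence ``It is clear from the injectivity of $\Pi^t$,'' and you have simply unpacked that sentence. Your remark that the SPD hypothesis on $A$ is not actually used is also accurate.
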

\begin{proof}
It is clear from the injectivity of $\Pi^t$.
\end{proof}

\cref{Prop:model_equiv} shows that the two problems~\eqref{model} and~\eqref{model_aux} are equivalent.
Specifically, solving one of the systems provides a solution to the other.
Therefore, to solve~\eqref{model}, we may equivalently solve~\eqref{model_aux}, and vice versa.

\subsection{Iterative methods on the auxiliary space}
Given an iterative method for solving~\eqref{model}, we show that there exists a corresponding ``equivalent" iterative method for solving the auxiliary system~\eqref{model_aux}.
We first consider the general iterative method~\eqref{iterative}.
We also consider the following iterative method for solving the auxiliary system~\eqref{model_aux}:
\begin{subequations}
\label{iterative_aux}
\begin{equation}
\undertilde{u}^{m+1} = \undertilde{u}^{m} + \undertilde{B} (\undertilde{f} - \undertilde{A} \undertilde{u}^{m} ),
\quad m \geq 0,
\end{equation}
where $\undertilde{B} \colon \undertilde{V} \to \undertilde{V}$ is a linear operator satisfying
\begin{equation}
B = \Pi \undertilde{B} \Pi^t.
\end{equation}
\end{subequations}
The following proposition states the equivalence relation between two iterative methods~\eqref{iterative} and~\eqref{iterative_aux}.

\begin{proposition}
\label{Prop:iterative_equiv}
In the linear system~\eqref{model}, suppose that $A$ is SPD.
The iterative methods~\eqref{iterative} and~\eqref{iterative_aux} are equivalent in the following sense:
Given $u^0 \in V$, if $\{ u^m \} \subset V$ is the sequence generated by the iterative method~\eqref{iterative}, then there exists a sequence $\{ \undertilde{u}^m \} \subset \undertilde{V}$ satisfying~\eqref{iterative_aux} and
\begin{equation}
\label{Prop1:iterative_equiv}
u^m = \Pi \undertilde{u}^m,
\quad m \geq 0.
\end{equation}
Conversely, given $\undertilde{u}^0 \in \undertilde{V}$, if $\{ \undertilde{u}^m \} \subset \undertilde{V}$ is the sequence generated by the iterative method~\eqref{iterative_aux}, then the sequence $\{ u^m \} \subset V$, defined by~\eqref{Prop1:iterative_equiv}, satisfies~\eqref{iterative}.
\end{proposition}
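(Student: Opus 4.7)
The plan is to treat the two directions as essentially one algebraic identity together with a choice of initial lift. The key computation is that, for any $\undertilde{v} \in \undertilde{V}$,
\begin{equation*}
\Pi \undertilde{B}(\undertilde{f} - \undertilde{A}\undertilde{v})
= \Pi \undertilde{B}(\Pi^t f - \Pi^t A \Pi \undertilde{v})
= \Pi \undertilde{B} \Pi^t (f - A \Pi \undertilde{v})
= B (f - A \Pi \undertilde{v}),
\end{equation*}
where I use only the definitions $\undertilde{A} = \Pi^t A \Pi$, $\undertilde{f} = \Pi^t f$ and the factorization $B = \Pi \undertilde{B} \Pi^t$. This is the single identity that drives both directions.

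For the converse direction (starting from $\undertilde{u}^0 \in \undertilde{V}$ and $\{\undertilde{u}^m\}$ satisfying~\eqref{iterative_aux}), I would apply $\Pi$ to the auxiliary recursion and invoke the identity above to obtain
\begin{equation*}
\Pi \undertilde{u}^{m+1} = \Pi \undertilde{u}^m + B(f - A \Pi \undertilde{u}^m),
\end{equation*}
which is exactly~\eqref{iterative} for the sequence $u^m := \Pi \undertilde{u}^m$.

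For the forward direction (starting from $u^0 \in V$ and $\{u^m\}$ satisfying~\eqref{iterative}), I would use the surjectivity of $\Pi$ to pick any preimage $\undertilde{u}^0 \in \undertilde{V}$ with $\Pi \undertilde{u}^0 = u^0$, and then define $\undertilde{u}^{m+1} := \undertilde{u}^m + \undertilde{B}(\undertilde{f} - \undertilde{A} \undertilde{u}^m)$ recursively. The relation $u^m = \Pi \undertilde{u}^m$ then follows by induction on $m$: assuming it holds at step $m$, the same identity as above gives $\Pi \undertilde{u}^{m+1} = u^m + B(f - A u^m) = u^{m+1}$.

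There is no real obstacle; the only mildly subtle point is that the lifted sequence $\{\undertilde{u}^m\}$ is not unique, since $\mathcal{N}(\Pi)$ may be nontrivial and so the choice of $\undertilde{u}^0$ is not canonical. This is precisely why the statement only asserts existence of a lift rather than uniqueness, and it is compatible with the phrasing of~\eqref{Prop1:iterative_equiv}. No further ingredients beyond surjectivity of $\Pi$ and the block factorizations defining $\undertilde{A}$, $\undertilde{f}$, $B$ are needed.
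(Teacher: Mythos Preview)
Your proof is correct. The converse direction matches the paper's (which simply calls it ``immediate''), and both arguments rest on the same algebraic identity $\Pi \undertilde{B}(\undertilde{f} - \undertilde{A}\undertilde{v}) = B(f - A\Pi\undertilde{v})$.

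For the forward direction, however, your construction differs from the paper's. You lift only the initial point $u^0$ to some $\undertilde{u}^0$ and then \emph{define} the rest of the sequence by running the auxiliary iteration~\eqref{iterative_aux}; the projection property $\Pi\undertilde{u}^m = u^m$ then follows by a clean induction. The paper instead lifts each $u^m$ independently to some $\undertilde{\bar{u}}^m$, observes that these lifts fail to satisfy~\eqref{iterative_aux} by an element $\undertilde{\phi}^{m+1}\in\mathcal{N}(\Pi)$, and then corrects by setting $\undertilde{u}^m = \undertilde{\bar{u}}^m - \sum_{j=1}^m \undertilde{\phi}^j$. Your route is shorter and avoids the correction step entirely; the paper's route makes the nonuniqueness of the lift (which you note at the end) more visible, since the kernel elements $\undertilde{\phi}^j$ appear explicitly. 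Either way the content is the same.
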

\begin{proof}
We prove only the forward implication; the converse is immediate.
We assume that the sequence $\{ u^m \}$ is generated by the iterative method~\eqref{iterative} for a given $u^0 \in V$.
Since $\Pi$ is surjective, for each $m \geq 0$, there exists $\undertilde{\bar{u}}^m \in \undertilde{V}$ such that $u^m = \Pi \undertilde{\bar{u}}^m$.
Then~\eqref{iterative} implies
\begin{equation*}
\Pi \undertilde{\bar{u}}^{m+1} = \Pi \undertilde{\bar{u}}^{m} + B (f - A \Pi \undertilde{\bar{u}}^{m})
= \Pi \undertilde{\bar{u}}^{m} + \Pi \undertilde{B} (\undertilde{f} - \undertilde{A} \undertilde{\bar{u}}^{m}).
\end{equation*}
It follows that there exists a sequence $\{ \undertilde{\phi}^m \} \subset \mathcal{N} (\Pi)$ such that
\begin{equation*}
\undertilde{\bar{u}}^{m+1} = \undertilde{\bar{u}}^{m} + \undertilde{B} (\undertilde{f} - \undertilde{A} \undertilde{\bar{u}}^{m}) + \undertilde{\phi}^{m+1},
\quad m \geq 0.
\end{equation*}
Now, if we define
\begin{equation*}
\undertilde{u}^m = \undertilde{\bar{u}}^m - \sum_{j=1}^{m} \undertilde{\phi}^j,
\quad m \geq 0,
\end{equation*}
then it is straightforward to verify that~\eqref{iterative_aux} and~\eqref{Prop1:iterative_equiv} hold.
\end{proof}

Recall that the error propagation operator for the iterative method~\eqref{iterative} is given by $I - BA$; see~\eqref{error_propagation}.
Similarly, the error propagation operator for the iterative method~\eqref{iterative_aux} is $\undertilde{I} - \undertilde{B} \undertilde{A}$ where $\undertilde{I}$ denotes the identity operator on $\undertilde{V}$.  
In \cref{Prop:error_propagation}, we show that the operator norm of $I - BA$ coincides with the operator seminorm of $\undertilde{I} - \undertilde{B}\undertilde{A}$.
We recall that the operator seminorm $| \cdot |_T$ was defined in~\eqref{operator_seminorm}.

\begin{proposition}
\label{Prop:error_propagation}
In the linear system~\eqref{model}, suppose that $A$ is SPD.
In the iterative methods~\eqref{iterative} and~\eqref{iterative_aux}, we have
\begin{equation*}
\| I - B A \|_A = | \undertilde{I} - \undertilde{B} \undertilde{A} |_{\undertilde{A}}.
\end{equation*}
\end{proposition}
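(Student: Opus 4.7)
The plan is to reduce the identity to two elementary facts: an algebraic intertwining of the error propagators by $\Pi$, and an isometry relation between the seminorm $|\cdot|_{\undertilde{A}}$ on $\undertilde{V}$ and the norm $\|\cdot\|_A$ on $V$. Combined with surjectivity of $\Pi$, these immediately match the two suprema that define $\|I - BA\|_A$ and $|\undertilde{I} - \undertilde{B}\undertilde{A}|_{\undertilde{A}}$.

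First I would verify the intertwining identity
\[
(I - BA)\Pi = \Pi(\undertilde{I} - \undertilde{B}\undertilde{A}),
\]
which follows by substituting $B = \Pi \undertilde{B} \Pi^t$ and $\undertilde{A} = \Pi^t A \Pi$ on the right-hand side. Next, for every $\undertilde{v} \in \undertilde{V}$ the adjointness of $\Pi$ yields
\[
\|\Pi \undertilde{v}\|_A^2 = (A \Pi \undertilde{v}, \Pi \undertilde{v}) = (\Pi^t A \Pi \undertilde{v}, \undertilde{v}) = (\undertilde{A} \undertilde{v}, \undertilde{v}) = |\undertilde{v}|_{\undertilde{A}}^2,
\]
so $\|\Pi \undertilde{v}\|_A = |\undertilde{v}|_{\undertilde{A}}$ for all $\undertilde{v} \in \undertilde{V}$. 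Because $\Pi$ is surjective, this isometry relation forces the image of the set $\{\undertilde{v} \in \undertilde{V} : |\undertilde{v}|_{\undertilde{A}} = 1\}$ under $\Pi$ to coincide with $\{v \in V : \|v\|_A = 1\}$.

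Chaining these three facts, I would then write
\[
\|I - BA\|_A = \sup_{|\undertilde{v}|_{\undertilde{A}} = 1} \|(I - BA)\Pi \undertilde{v}\|_A = \sup_{|\undertilde{v}|_{\undertilde{A}} = 1} \|\Pi(\undertilde{I} - \undertilde{B}\undertilde{A}) \undertilde{v}\|_A = |\undertilde{I} - \undertilde{B}\undertilde{A}|_{\undertilde{A}},
\]
where the first equality uses surjectivity together with the isometry to change variables $v = \Pi \undertilde{v}$, the second uses the intertwining identity, and the last applies the isometry one more time. The only potential subtlety, rather than a genuine obstacle, is that $\undertilde{A}$ is merely semi-SPD: elements of $\mathcal{N}(\Pi) = \mathcal{N}(\undertilde{A})$ have zero seminorm and are mapped to $0$, so they lie outside both unit sets and cause no harm in either supremum.
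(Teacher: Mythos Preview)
Your proof is correct and is precisely the argument the paper has in mind: the paper's proof reads only ``It is clear from the surjectivity of $\Pi$,'' and your intertwining identity, isometry relation, and change of variables via surjectivity are exactly the steps that make this clarity explicit.
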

\begin{proof}
It is clear from the surjectivity of $\Pi$.
\end{proof}

In \cref{Thm:error_propagation_aux}, which follows directly from \cref{Thm:error_propagation,Lem:aux_space_lemma}, we demonstrate that the error propagation operator $I - BA$ of~\eqref{iterative} can be characterized in terms of the auxiliary system~\eqref{model_aux}.

\begin{theorem}
\label{Thm:error_propagation_aux}
In the linear system~\eqref{model}, suppose that $A$ is SPD.
If the symmetrized operator
\begin{equation}
\label{symmetrized_aux}
\undertilde{\bar{B}} = \undertilde{B} + \undertilde{B}^t - \undertilde{B}^t \undertilde{A} \undertilde{B} \colon \undertilde{V} \to \undertilde{V}
\end{equation}
is SPD, then the iterative method~\eqref{iterative} converges, and it satisfies
\begin{equation*}
\| I - B A \|_A^2 = 1 - \left( \sup_{v \in V,\ \| v \|_A = 1} \inf_{\undertilde{v} \in \undertilde{V},\ \Pi \undertilde{v} = v} (\undertilde{\bar{B}}^{-1} \undertilde{v}, \undertilde{v} ) \right)^{-1}.
\end{equation*}
\end{theorem}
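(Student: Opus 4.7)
The plan is to reduce this statement to a direct composition of two results that are already available in the excerpt: the SPD error propagation identity in \cref{Thm:error_propagation}, and the auxiliary space lemma (\cref{Lem:aux_space_lemma}). The only algebraic bridge needed is the observation that the symmetrized operator $\bar{B}$ on $V$ is itself obtained from $\undertilde{\bar{B}}$ on $\undertilde{V}$ by the same sandwich construction $B = \Pi \undertilde{B} \Pi^t$.

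First, I would verify the identity
\begin{equation*}
\bar{B} = \Pi \, \undertilde{\bar{B}} \, \Pi^t.
\end{equation*}
This is a one-line computation: using $B = \Pi \undertilde{B} \Pi^t$, $B^t = \Pi \undertilde{B}^t \Pi^t$, and $\undertilde{A} = \Pi^t A \Pi$, expand the definition $\bar{B} = B + B^t - B^t A B$ and factor $\Pi$ on the left and $\Pi^t$ on the right to recover $\undertilde{B} + \undertilde{B}^t - \undertilde{B}^t \undertilde{A} \undertilde{B} = \undertilde{\bar{B}}$ in the middle.

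Second, since $\undertilde{\bar{B}}$ is assumed SPD and $\Pi$ is surjective, \cref{Lem:aux_space_lemma} applies with $\undertilde{\bar{B}}$ in place of $\undertilde{B}$. This immediately yields that $\bar{B}$ is SPD and that
\begin{equation*}
(\bar{B}^{-1} v, v) = \inf_{\undertilde{v} \in \undertilde{V},\ \Pi \undertilde{v} = v} (\undertilde{\bar{B}}^{-1} \undertilde{v}, \undertilde{v}), \quad v \in V.
\end{equation*}
In particular, the SPD hypothesis on $\bar{B}$ required by \cref{Thm:error_propagation} is satisfied, so the iterative method~\eqref{iterative} converges and
\begin{equation*}
\|I - BA\|_A^2 = 1 - \left( \sup_{v \in V,\ \|v\|_A = 1} (\bar{B}^{-1} v, v) \right)^{-1}.
\end{equation*}
Substituting the infimum formula for $(\bar{B}^{-1} v, v)$ into this identity produces the stated $\sup$--$\inf$ expression and completes the proof.

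There is no real obstacle here; the content of the result is in the bookkeeping rather than in any genuinely new estimate. The only point that requires a moment of care is ensuring that the algebraic identity $\bar{B} = \Pi \undertilde{\bar{B}} \Pi^t$ holds exactly, so that the auxiliary space lemma can be invoked verbatim; after that, \cref{Thm:error_propagation} and \cref{Prop:error_propagation} do all of the analytic work.
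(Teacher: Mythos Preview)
Your proof is correct and matches the paper's approach exactly: the paper states that \cref{Thm:error_propagation_aux} ``follows directly from \cref{Thm:error_propagation,Lem:aux_space_lemma},'' and your argument supplies precisely those steps, with the algebraic identity $\bar{B} = \Pi \undertilde{\bar{B}} \Pi^t$ serving as the bridge between them.
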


Similarly, if the operator $\undertilde{B}$ is SPD, which implies by \cref{Lem:aux_space_lemma} that $B$ is also SPD, the extremal eigenvalues of $BA$ can be characterized in terms of the auxiliary system~\eqref{model_aux}; see \cref{Thm:condition_number_aux}.
We remark that \cref{Thm:condition_number_aux} can be viewed as a refined version of the classical fictitious space lemma~\cite{Nepomnyaschikh:1992,Xu:1996}.

\begin{theorem}
\label{Thm:condition_number_aux}
In the linear system~\eqref{iterative}, suppose that $A$ is SPD.
If the operator $\undertilde{B}$ given in~\eqref{iterative_aux} is SPD, then we have
\begin{align*}
\lambda_{\min} (B A) &= \left( \sup_{ v \in V,\ \| v \|_A = 1} \inf_{\undertilde{v} \in \undertilde{V},\ \Pi \undertilde{v} = v } ( \undertilde{B}^{-1} \undertilde{v}, \undertilde{v} ) \right)^{-1}, \\
\lambda_{\max} (B A) &= \left( \inf_{ v \in V,\ \| v \|_A = 1} \inf_{ \undertilde{v} \in \undertilde{V},\ \Pi \undertilde{v} = v } ( \undertilde{B}^{-1} \undertilde{v}, \undertilde{v} ) \right)^{-1}.
\end{align*}
\end{theorem}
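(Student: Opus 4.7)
The plan is to derive this result as a direct composition of two tools already available in the paper: \cref{Thm:condition_number}, which gives the extremal eigenvalues of $BA$ in terms of the quadratic form $(B^{-1}v,v)$, and \cref{Lem:aux_space_lemma}, which expresses $(B^{-1}v,v)$ as an infimum over preimages in the auxiliary space.

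First I would verify the hypotheses needed for these tools. Since $\undertilde{B}$ is assumed SPD and $\Pi \colon \undertilde{V} \to V$ is surjective, the auxiliary space lemma (\cref{Lem:aux_space_lemma}) applies with $B = \Pi \undertilde{B} \Pi^t$ and yields that $B$ is SPD on $V$. Together with the standing assumption that $A$ is SPD, this puts us in the setting of \cref{Thm:condition_number}, so both identities \eqref{kappa_lambda_min} and \eqref{kappa_lambda_max} are at our disposal.

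Next I would simply substitute. Applying the second equality in \cref{Lem:aux_space_lemma} pointwise for each $v \in V$ gives
\begin{equation*}
(B^{-1} v, v) = \inf_{\undertilde{v} \in \undertilde{V},\ \Pi \undertilde{v} = v} (\undertilde{B}^{-1} \undertilde{v}, \undertilde{v}).
\end{equation*}
Taking the supremum over $\{v \in V : \|v\|_A = 1\}$ on both sides and inverting, via \eqref{kappa_lambda_min}, yields the claimed formula for $\lambda_{\min}(BA)$; taking the infimum and inverting, via \eqref{kappa_lambda_max}, yields the formula for $\lambda_{\max}(BA)$. No further manipulation is required, since the inner infimum is already a well-defined function of $v$ (the set of preimages $\Pi^{-1}(\{v\})$ is nonempty by surjectivity of $\Pi$, and $(\undertilde{B}^{-1}\,\cdot\,,\cdot)$ is a positive definite quadratic form on $\undertilde{V}$, so the infimum is attained).

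There is essentially no obstacle here, as the theorem is a clean corollary of the two cited results. The only step that requires a little care is making sure the order of the $\sup/\inf$ operations is correctly inherited from \cref{Thm:condition_number} after substitution—in particular, that the outer quantifier in the $\lambda_{\max}$ formula is an $\inf$ over $v$ of an $\inf$ over $\undertilde{v}$, which correctly collapses (if one wishes to observe it) to a single infimum over $\{\undertilde{v} \in \undertilde{V} : \|\Pi \undertilde{v}\|_A = 1\}$, whereas the $\lambda_{\min}$ formula genuinely keeps the $\sup$–$\inf$ structure and cannot be collapsed. Noting this distinction is the only substantive remark; otherwise the proof is a one-line chain of substitutions.
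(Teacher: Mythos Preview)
Your proposal is correct and matches the paper's approach: the paper presents \cref{Thm:condition_number_aux} as an immediate consequence of \cref{Thm:condition_number} and \cref{Lem:aux_space_lemma}, exactly as you describe. No explicit proof is given in the paper beyond the remark that the SPD-ness of $\undertilde{B}$ implies, via \cref{Lem:aux_space_lemma}, that $B$ is SPD, after which \cref{Thm:condition_number} applies directly.
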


We note that the assumptions on $\undertilde{\bar{B}}$ and $\undertilde{B}$ in \cref{Thm:error_propagation_aux,Thm:condition_number_aux}, respectively, can be relaxed.  
As stated in \cref{Thm:error_propagation_aux_general,Thm:condition_number_aux_general}, it is not necessary for these operators to be SPD on the entire space $\undertilde{V}$.  
It suffices that they are SPD on a subspace $\undertilde{W} \subseteq \undertilde{V}$ satisfying $\undertilde{W} \supset \mathcal{R}(\undertilde{A})$.

\begin{theorem}
\label{Thm:error_propagation_aux_general}
In the linear system~\eqref{model}, suppose that $A$ is SPD.
Let $\undertilde{W}$ be a subspace of $\undertilde{V}$ with $\undertilde{W} \supset \mathcal{R} (\undertilde{A})$, and let $\undertilde{Q} \colon \undertilde{V} \to \undertilde{W}$ be the orthogonal projection onto $\undertilde{W}$.
If the symmetrized operator
\begin{equation}
\label{symmetrized_aux_general}
\undertilde{\bar{B}}_{\undertilde{Q}}
= \undertilde{Q} ( \undertilde{B} + \undertilde{B}^t - \undertilde{B}^t \undertilde{A} \undertilde{B} ) \undertilde{Q}^t 
\colon \undertilde{W} \to \undertilde{W}
\end{equation}
is SPD, then the iterative method~\eqref{iterative} converges, and it satisfies
\begin{equation*}
\| I - B A \|_A^2 = 1 - \left( \sup_{v \in V,\ \| v \|_A = 1} \inf_{\undertilde{v} \in \undertilde{W},\ \Pi \undertilde{v} = v} (\undertilde{\bar{B}}_{\undertilde{Q}}^{-1} \undertilde{v}, \undertilde{v} ) \right)^{-1}.
\end{equation*}
\end{theorem}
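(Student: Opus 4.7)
The plan is to reduce the claim to \cref{Thm:error_propagation_aux} by treating the subspace $\undertilde{W}$ itself as a smaller auxiliary space. I would set $\undertilde{\Pi} := \Pi \undertilde{Q}^t \colon \undertilde{W} \to V$ and $\undertilde{B}_{\undertilde{W}} := \undertilde{Q} \undertilde{B} \undertilde{Q}^t \colon \undertilde{W} \to \undertilde{W}$, so that $\undertilde{Q}^t$ is the inclusion $\undertilde{W} \hookrightarrow \undertilde{V}$, $\undertilde{Q} \undertilde{Q}^t = I_{\undertilde{W}}$, and $\undertilde{Q}^t \undertilde{Q}$ is the orthogonal projection of $\undertilde{V}$ onto $\undertilde{W}$. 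I would then verify that the triple $(\undertilde{W}, \undertilde{\Pi}, \undertilde{B}_{\undertilde{W}})$ satisfies the hypotheses of \cref{Thm:error_propagation_aux} for the original system $Au = f$, and invoke that theorem.

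The key algebraic fact to establish is that the hypothesis $\undertilde{W} \supset \mathcal{R}(\undertilde{A}) = \mathcal{R}(\Pi^t)$ implies $\undertilde{Q}^t \undertilde{Q} \Pi^t = \Pi^t$ (and by adjunction $\Pi \undertilde{Q}^t \undertilde{Q} = \Pi$), together with $\undertilde{Q}^t \undertilde{Q} \undertilde{A} \undertilde{Q}^t \undertilde{Q} = \undertilde{A}$, since $\mathcal{R}(\undertilde{A})$ and $\mathcal{R}(\undertilde{A}^t)$ both lie in $\undertilde{W}$. Three consequences follow. First, $\undertilde{\Pi}$ is surjective: for $v \in V$, choose $\undertilde{v} \in \undertilde{V}$ with $\Pi \undertilde{v} = v$ and take $\undertilde{w} = \undertilde{Q} \undertilde{v}$, so that $\undertilde{\Pi} \undertilde{w} = \Pi \undertilde{Q}^t \undertilde{Q} \undertilde{v} = v$. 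Second, $B$ is correctly reproduced:
\[
\undertilde{\Pi} \undertilde{B}_{\undertilde{W}} \undertilde{\Pi}^t = \Pi \undertilde{Q}^t \undertilde{Q} \undertilde{B} \undertilde{Q}^t \undertilde{Q} \Pi^t = \Pi \undertilde{B} \Pi^t = B.
\]
Third, the induced auxiliary operator on $\undertilde{W}$ is $\undertilde{\Pi}^t A \undertilde{\Pi} = \undertilde{Q} \undertilde{A} \undertilde{Q}^t$, and its symmetrized preconditioner satisfies
\[
\undertilde{B}_{\undertilde{W}} + \undertilde{B}_{\undertilde{W}}^t - \undertilde{B}_{\undertilde{W}}^t (\undertilde{Q} \undertilde{A} \undertilde{Q}^t) \undertilde{B}_{\undertilde{W}} = \undertilde{Q}(\undertilde{B} + \undertilde{B}^t - \undertilde{B}^t \undertilde{A} \undertilde{B}) \undertilde{Q}^t = \undertilde{\bar{B}}_{\undertilde{Q}},
\]
which is SPD by assumption.

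With the hypotheses of \cref{Thm:error_propagation_aux} matched, applying that theorem yields convergence of~\eqref{iterative} together with
\[
\|I - BA\|_A^2 = 1 - \left( \sup_{v \in V,\ \|v\|_A = 1} \inf_{\undertilde{v} \in \undertilde{W},\ \undertilde{\Pi} \undertilde{v} = v} (\undertilde{\bar{B}}_{\undertilde{Q}}^{-1} \undertilde{v}, \undertilde{v}) \right)^{-1},
\]
which coincides with the stated identity since $\undertilde{\Pi} \undertilde{v} = \Pi \undertilde{v}$ for $\undertilde{v} \in \undertilde{W}$. The only delicate point is the bookkeeping around $\undertilde{Q}^t \undertilde{Q}$; the containment $\undertilde{W} \supset \mathcal{R}(\undertilde{A})$ is exactly what allows this projector to be absorbed whenever it meets $\Pi^t$, $\Pi$, or $\undertilde{A}$, and is precisely where the relaxation in this theorem buys its generality over \cref{Thm:error_propagation_aux}.
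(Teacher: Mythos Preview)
Your proposal is correct and follows essentially the same approach as the paper: reduce to \cref{Thm:error_propagation_aux} via the substitutions $\undertilde{V} \leftarrow \undertilde{W}$, $\Pi \leftarrow \Pi \undertilde{Q}^t$, $\undertilde{B} \leftarrow \undertilde{Q}\undertilde{B}\undertilde{Q}^t$. The paper states these substitutions tersely, whereas you spell out the verification that $\Pi \undertilde{Q}^t$ is surjective, that $B$ is recovered, and that the symmetrized operator on $\undertilde{W}$ coincides with $\undertilde{\bar{B}}_{\undertilde{Q}}$; but the argument is the same.
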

\begin{proof}
We observe that $\Pi \undertilde{Q}^t \colon \undertilde{W} \to V$ is surjective, and that
\begin{equation*}
\bar{B} = (\Pi \undertilde{Q}^t) \undertilde{\bar{B}}_{\undertilde{Q}} (\Pi \undertilde{Q}^t)^t.
\end{equation*}
Therefore, \cref{Thm:error_propagation_aux} applies with the substitutions
\begin{equation*}
\undertilde{V} \leftarrow \undertilde{W}, 
\qquad \Pi \leftarrow \Pi \undertilde{Q}^t, 
\qquad \undertilde{B} \leftarrow \undertilde{Q} \undertilde{B} \undertilde{Q}^t,
\end{equation*}
which yields the desired result.
\end{proof}

\begin{theorem}
\label{Thm:condition_number_aux_general}
In the linear system~\eqref{iterative}, suppose that $A$ is SPD.
Let $\undertilde{W}$ be a subspace of $\undertilde{V}$ with $\undertilde{W} \supset \mathcal{R} (\undertilde{A})$, and let $\undertilde{Q} \colon \undertilde{V} \to \undertilde{W}$ be the orthogonal projection onto $\undertilde{W}$.
If the operator
\begin{equation}
\label{B_aux_general}
\undertilde{B}_{\undertilde{Q}} = \undertilde{Q} \undertilde{B} \undertilde{Q}^t \colon \undertilde{W} \to \undertilde{W}
\end{equation}
is SPD, then we have
\begin{align*}
\lambda_{\min} (B A) &= \left( \sup_{ v \in V,\ \| v \|_A = 1} \inf_{\undertilde{v} \in \undertilde{W},\ \Pi \undertilde{v} = v } ( \undertilde{B}_{\undertilde{Q}}^{-1} \undertilde{v}, \undertilde{v} ) \right)^{-1}, \\
\lambda_{\max} (B A) &= \left( \inf_{ v \in V,\ \| v \|_A = 1} \inf_{ \undertilde{v} \in \undertilde{W},\ \Pi \undertilde{v} = v } ( \undertilde{B}_{\undertilde{Q}}^{-1} \undertilde{v}, \undertilde{v} ) \right)^{-1}.
\end{align*}
\end{theorem}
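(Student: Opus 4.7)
The plan is to mimic exactly the reduction used in the proof of \cref{Thm:error_propagation_aux_general}: we substitute the triple $(\undertilde{V},\Pi,\undertilde{B})$ by the reduced triple $(\undertilde{W}, \Pi\undertilde{Q}^t, \undertilde{B}_{\undertilde{Q}})$ and then invoke \cref{Thm:condition_number_aux} on that reduced setup. The whole argument therefore reduces to checking three compatibility facts.

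First I would record the structural identity that makes the reduction work. Since $\undertilde{Q}\colon\undertilde{V}\to\undertilde{W}$ is the orthogonal projection, its adjoint $\undertilde{Q}^t\colon\undertilde{W}\to\undertilde{V}$ is the inclusion, and the composition $\undertilde{Q}^t\undertilde{Q}\colon\undertilde{V}\to\undertilde{V}$ is the orthogonal projection (as a self-map of $\undertilde{V}$) onto $\undertilde{W}$. Because $\mathcal{R}(\Pi^t)=\mathcal{R}(\undertilde{A})\subset\undertilde{W}$, this projection fixes $\Pi^t$, i.e.\ $\undertilde{Q}^t\undertilde{Q}\Pi^t=\Pi^t$, and by taking adjoints $\Pi\undertilde{Q}^t\undertilde{Q}=\Pi$. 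Consequently
\begin{equation*}
B = \Pi\undertilde{B}\Pi^t
  = (\Pi\undertilde{Q}^t)(\undertilde{Q}\undertilde{B}\undertilde{Q}^t)(\Pi\undertilde{Q}^t)^t
  = (\Pi\undertilde{Q}^t)\,\undertilde{B}_{\undertilde{Q}}\,(\Pi\undertilde{Q}^t)^t.
\end{equation*}

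Next I would verify that the operator $\Pi\undertilde{Q}^t\colon\undertilde{W}\to V$ is surjective: given $v\in V$, surjectivity of $\Pi$ yields some $\undertilde{w}\in\undertilde{V}$ with $\Pi\undertilde{w}=v$, and then $(\Pi\undertilde{Q}^t)(\undertilde{Q}\undertilde{w})=\Pi\undertilde{w}=v$. Combined with the assumption that $\undertilde{B}_{\undertilde{Q}}$ is SPD on $\undertilde{W}$, this is exactly the setup required by \cref{Thm:condition_number_aux}.

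Finally I would apply \cref{Thm:condition_number_aux} with the substitutions
\begin{equation*}
\undertilde{V}\leftarrow\undertilde{W},\qquad \Pi\leftarrow\Pi\undertilde{Q}^t,\qquad \undertilde{B}\leftarrow\undertilde{B}_{\undertilde{Q}}.
\end{equation*}
The constraint $(\Pi\undertilde{Q}^t)\undertilde{v}=v$ for $\undertilde{v}\in\undertilde{W}$ is identical to $\Pi\undertilde{v}=v$ because $\undertilde{Q}^t$ is the inclusion, so the resulting variational formulas for $\lambda_{\min}(BA)$ and $\lambda_{\max}(BA)$ are precisely those stated in the theorem. I do not anticipate any real obstacle: the only non-trivial point is the inclusion $\mathcal{R}(\Pi^t)\subset\undertilde{W}$, which is exactly what lets $\undertilde{Q}^t\undertilde{Q}$ pass transparently through $\Pi^t$ and thereby preserve the factorization of $B$; everything else is bookkeeping.
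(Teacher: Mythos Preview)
Your proposal is correct and matches the paper's approach: the paper does not write out a separate proof for \cref{Thm:condition_number_aux_general}, but the intended argument is exactly the substitution $\undertilde{V}\leftarrow\undertilde{W}$, $\Pi\leftarrow\Pi\undertilde{Q}^t$, $\undertilde{B}\leftarrow\undertilde{Q}\undertilde{B}\undertilde{Q}^t$ already exhibited in the proof of \cref{Thm:error_propagation_aux_general}, applied this time to \cref{Thm:condition_number_aux}. Your verification of the three compatibility facts (the factorization of $B$, surjectivity of $\Pi\undertilde{Q}^t$, and the equivalence of the constraints) is precisely what is needed.
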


\section{Iterative methods for semidefinite linear systems}
\label{Sec:Iterative_singular}
In this section, we extend the abstract theory of iterative methods from \cref{Sec:Iterative} to the semi-SPD setting. 
While related results are available in~\cite{LWXZ:2008,WLXZ:2008}, we derive the semi-SPD theory as a direct consequence of the auxiliary space framework developed in \cref{Sec:Aux}.

In the linear system~\eqref{model}, we assume that $A$ is semi-SPD.
We consider the restriction of~\eqref{model} on $\mathcal{R} (A)$, namely,
\begin{subequations}
\label{model_singular}
\begin{equation}
A_Q u_Q = f,
\end{equation}
where $Q \colon V \to \mathcal{R} (A)$ is the orthogonal projection onto $\mathcal{R} (A)$, and
\begin{equation}
A_Q = Q A Q^t \colon \mathcal{R} (A) \to \mathcal{R} (A).
\end{equation}
\end{subequations}
Since $Q^t$ is the natural embedding from $\mathcal{R}(A)$ into $V$, we omit it whenever there is no ambiguity.
Note that~\eqref{model_singular} is an SPD linear system on $\mathcal{R} (A)$.
The unique solution of~\eqref{model_singular} is denoted by $u_Q \in \mathcal{R}(A)$.

The main observation is that the original linear system~\eqref{model} can be regarded as an auxiliary system for~\eqref{model_singular}. 
Indeed, since $A = Q^t A_Q Q$, the system~\eqref{model} serves as an auxiliary system for~\eqref{model_singular} with the surjective map $Q \colon V \to \mathcal{R}(A)$. 
Therefore, as a corollary of \cref{Prop:model_equiv}, we obtain the equivalence between~\eqref{model} and~\eqref{model_singular}, as summarized in \cref{Prop:model_equiv_singular}.

\begin{proposition}
\label{Prop:model_equiv_singular}
In the linear system~\eqref{model}, suppose that $A$ is semi-SPD.
The two linear systems~\eqref{model} and~\eqref{model_singular} are equivalent in the following sense:
For $u \in V$ and $u_Q \in \mathcal{R} (A)$, if $u$ solves~\eqref{model} and $Q u = u_Q$, then $u_Q$ solves~\eqref{model_singular}.
Conversely, if $u_Q$ solves~\eqref{model_singular} and $Q u = u_Q$, then $u$ solves~\eqref{model}.
\end{proposition}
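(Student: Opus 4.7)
The plan is to obtain \cref{Prop:model_equiv_singular} as a direct corollary of \cref{Prop:model_equiv}, by reversing the roles of the ``base'' and ``auxiliary'' systems. Specifically, the SPD system~\eqref{model_singular} on $\mathcal{R}(A)$ should play the role of the base system, while the semi-SPD system~\eqref{model} on $V$ should play the role of the auxiliary system, with the surjective orthogonal projection $Q \colon V \to \mathcal{R}(A)$ serving as the connecting map.

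The first step is to verify the hypotheses required to invoke \cref{Prop:model_equiv} under the substitutions $V \leftarrow \mathcal{R}(A)$, $\undertilde{V} \leftarrow V$, $A \leftarrow A_Q$, and $\Pi \leftarrow Q$. Concretely: (i) $A_Q$ is SPD on $\mathcal{R}(A)$, which follows from the definition of semi-SPD restricted to the range; (ii) $Q$ is surjective onto $\mathcal{R}(A)$ by construction; and (iii) $f \in \mathcal{R}(A) = \mathcal{R}(A_Q)$, using that $A_Q$ is invertible on $\mathcal{R}(A)$ by (i).

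The second step is to confirm the algebraic identities $A = Q^t A_Q Q$ and $f = Q^t f$, which ensure that the auxiliary system associated with~\eqref{model_singular} via $Q$ is exactly~\eqref{model}. Both identities follow from $\mathcal{R}(A)^\perp = \mathcal{N}(A)$: the operator $A$ vanishes on $\mathcal{R}(A)^\perp$ and coincides with $A_Q$ on $\mathcal{R}(A)$, and $Q^t$ acts as the natural inclusion of $\mathcal{R}(A)$ into $V$, fixing any element already in $\mathcal{R}(A)$.

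With these verifications complete, the equivalence in \cref{Prop:model_equiv} translates verbatim into the stated claim. I do not anticipate any substantial technical obstacle; the argument is essentially a bookkeeping exercise. The only subtle point worth noting is that non-uniqueness of $u$ (up to elements of $\mathcal{N}(A)$) causes no difficulty, since $A$ annihilates $\mathcal{N}(A)$ and therefore any lift $u \in V$ of $u_Q$ under $Q$ automatically satisfies $Au = f$ whenever $u_Q$ solves~\eqref{model_singular}.
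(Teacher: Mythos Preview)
Your proposal is correct and follows precisely the route the paper takes: the paragraph preceding \cref{Prop:model_equiv_singular} already records that $A = Q^t A_Q Q$ makes~\eqref{model} the auxiliary system of the SPD problem~\eqref{model_singular} under the surjection $Q$, and the proposition is then stated as an immediate corollary of \cref{Prop:model_equiv}. Your verifications (i)--(iii) and the identity $f = Q^t f$ simply spell out this passage in more detail.
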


We now consider iterative methods of the form~\eqref{iterative} for the semi-SPD system~\eqref{model}. 
We begin with a precise definition of convergence for such methods. 
Since the solution to~\eqref{model} is generally nonunique, the notion of convergence in \cref{Def:convergence} is not applicable. 
Motivated by \cref{Prop:model_equiv_singular}, we introduce \cref{Def:convergence_singular} as an appropriate notion of convergence for iterative methods applied to semi-SPD linear systems.

\begin{definition}
\label{Def:convergence_singular}
An iterative method $\{ u^m \}$ for solving the semi-SPD linear system~\eqref{model} is said to be convergent if $Q u^m$ converges to $Q u$ for any initial guess $u^0 \in V$.
\end{definition}

Based on this definition, we obtain the following characterization of convergence.

\begin{proposition}
\label{Prop:convergence}
An iterative method $\{ u^m \}$ for solving the semi-SPD linear system~\eqref{model} is convergent if and only if $| u - u^m |_A$ converges to $0$ for any initial guess $u^0 \in V$.
\end{proposition}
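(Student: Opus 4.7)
The plan is to reduce the seminorm convergence on $V$ to norm convergence on the finite-dimensional subspace $\mathcal{R}(A)$, where $A_Q$ is SPD.

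First, I would record the elementary identity
\[
|v|_A^2 = (Av, v) = (Q^t A_Q Q v, v) = (A_Q Q v, Q v) = \|Q v\|_{A_Q}^2, \quad v \in V,
\]
which follows from the factorization $A = Q^t A_Q Q$ together with $Q^t$ being the natural embedding. Applying this with $v = u - u^m$ gives $|u - u^m|_A = \|Q u - Q u^m\|_{A_Q}$.

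Next, I would observe that, by \cref{Prop:model_equiv_singular}, $A_Q$ is SPD on the finite-dimensional space $\mathcal{R}(A)$. Hence $\|\cdot\|_{A_Q}$ is a genuine norm on $\mathcal{R}(A)$, and on this finite-dimensional space it is equivalent to the ambient norm $\|\cdot\|$. Combining this equivalence with the identity above, $|u - u^m|_A \to 0$ is equivalent to $\|Q u - Q u^m\|_{A_Q} \to 0$, which is equivalent to $Q u^m \to Q u$ in $V$. By \cref{Def:convergence_singular}, this is precisely the convergence of the iterative method.

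The proof is essentially bookkeeping, so I do not foresee any obstacle; the only mild subtlety is justifying that $|\cdot|_A$ on $V$ matches $\|\cdot\|_{A_Q}$ on $\mathcal{R}(A)$ via $Q$, which is immediate from the decomposition $V = \mathcal{R}(A) \oplus \mathcal{N}(A)$ together with the fact that $A$ vanishes on $\mathcal{N}(A)$.
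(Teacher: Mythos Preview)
Your argument is correct. The paper does not supply an explicit proof of \cref{Prop:convergence}, treating it as an immediate consequence of \cref{Def:convergence_singular} and the surrounding setup; your write-up is precisely the natural formalization of that implicit reasoning. One tiny quibble: the SPD-ness of $A_Q$ is asserted in the text right after~\eqref{model_singular}, not in \cref{Prop:model_equiv_singular}, so you may want to adjust that citation.
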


\Cref{Prop:convergence} indicates that, to analyze the convergence of~\eqref{iterative}, 
it is sufficient to study the operator seminorm $| I - B A |_A$.

To utilize the auxiliary space theory developed in \cref{Sec:Aux}, we introduce the following iterative method for solving~\eqref{model_singular}, which can be regarded as a restriction of the iterative method~\eqref{iterative} to the range of~$A$:
\begin{subequations}
\label{iterative_singular}
\begin{equation}
u_Q^{m+1} = u_Q^{m} + B_Q (f - A_Q u_Q^{m}),
\quad m \geq 0,
\end{equation}
where $A_Q$ was given in~\eqref{model_singular}, and
\begin{equation}
B_Q = Q B Q^t \colon \mathcal{R}(A) \to \mathcal{R}(A).
\end{equation}
\end{subequations}
Thanks to \cref{Prop:iterative_equiv}, we deduce that the iterative method~\eqref{iterative} converges in the sense of \cref{Def:convergence_singular} if and only if the iterative method~\eqref{iterative_singular} converges.
Moreover, \cref{Prop:error_propagation} implies
\begin{equation}
\label{error_propagation_singular}
| I - B A |_A = \| I_Q - B_Q A_Q \|_{A_Q},
\end{equation}
where $I_Q$ denotes the identity operator on $\mathcal{R} (A)$.

Now, as a special case of \cref{Thm:error_propagation_aux}, we obtain the following sharp estimate for $| I - BA |_A$.
Applications of \cref{Thm:error_propagation_singular} to the analysis of the Gauss--Seidel method and general matrix splitting methods~\cite{Cao:2008,LWXZ:2006} can be found in~\cite{WLXZ:2008}.

\begin{theorem}
\label{Thm:error_propagation_singular}
In the linear system~\eqref{model}, suppose that $A$ is semi-SPD.
If the symmetrized operator $\bar{B}$ given in~\eqref{symmetrized} is SPD, then the iterative method~\eqref{iterative} converges, and it satisfies
\begin{equation*}
|I - B A |_A^2 =
1 - \left( \sup_{ v \in \mathcal{R} (A),\ |v|_A = 1}  \inf_{\phi \in \mathcal{N} (A) }( \bar{B}^{-1} (v + \phi), v + \phi ) \right)^{-1}.
\end{equation*}
\end{theorem}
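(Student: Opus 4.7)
The plan is to recognize that Section~4 has already set up~\eqref{model} as the auxiliary system for its SPD restriction~\eqref{model_singular} via the surjection $Q \colon V \to \mathcal{R}(A)$, so that \cref{Thm:error_propagation_aux} applies directly. Under the dictionary $\undertilde{V} \leftarrow V$, $\Pi \leftarrow Q$, $\undertilde{A} \leftarrow A$, $\undertilde{B} \leftarrow B$, the identity $A = Q^t A_Q Q$ realizes $A$ as the aux-space operator over $A_Q$, the preconditioner $B_Q = Q B Q^t$ matches the construction $\Pi \undertilde{B} \Pi^t$, and the symmetrized aux operator $\undertilde{\bar{B}}$ in~\eqref{symmetrized_aux} reduces verbatim to $\bar{B}$ in~\eqref{symmetrized}. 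Hence the SPD hypothesis on $\bar{B}$ stated in the theorem is exactly the hypothesis of \cref{Thm:error_propagation_aux} in this setting.

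Applying \cref{Thm:error_propagation_aux} to the SPD problem~\eqref{model_singular} on $\mathcal{R}(A)$ yields convergence of the restricted iteration~\eqref{iterative_singular} together with
\begin{equation*}
\| I_Q - B_Q A_Q \|_{A_Q}^2
= 1 - \left( \sup_{v \in \mathcal{R}(A),\ \|v\|_{A_Q} = 1}\ \inf_{\undertilde{v} \in V,\ Q\undertilde{v} = v} ( \bar{B}^{-1} \undertilde{v}, \undertilde{v} ) \right)^{-1}.
\end{equation*}
The left-hand side equals $|I - BA|_A^2$ by~\eqref{error_propagation_singular}. Moreover, \cref{Prop:iterative_equiv} (applied with the above dictionary) says that if $\{ u^m \}$ satisfies~\eqref{iterative}, then $\{ Q u^m \}$ satisfies~\eqref{iterative_singular} starting from $Q u^0$; combined with the convergence of~\eqref{iterative_singular}, this gives $Q u^m \to u_Q = Q u$, that is, convergence of~\eqref{iterative} in the sense of \cref{Def:convergence_singular}.

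To finish, I would rewrite the right-hand side in the form claimed. Since $Q^t$ is the inclusion of $\mathcal{R}(A)$ into $V$, one has $\| v \|_{A_Q} = | v |_A$ for $v \in \mathcal{R}(A)$. The constraint set $\{ \undertilde{v} \in V : Q \undertilde{v} = v \}$ equals the affine subspace $v + \mathcal{N}(Q) = v + \mathcal{N}(A)$, using $\mathcal{N}(Q) = \mathcal{R}(A)^\perp = \mathcal{N}(A)$ by the symmetry of $A$. Parametrizing $\undertilde{v} = v + \phi$ with $\phi \in \mathcal{N}(A)$ produces the stated identity. The only real subtlety is carefully matching operators in the aux-space dictionary; once that identification is in place, every remaining step is a direct invocation of results already proved, so I do not anticipate any serious obstacle.
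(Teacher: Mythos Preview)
Your proof is correct and follows essentially the same approach as the paper: both apply \cref{Thm:error_propagation_aux} with the substitutions $V \leftarrow \mathcal{R}(A)$, $\undertilde{V} \leftarrow V$, $\Pi \leftarrow Q$, $A \leftarrow A_Q$, $\undertilde{B} \leftarrow B$, then invoke~\eqref{error_propagation_singular} and rewrite the constraint $Q\undertilde{v} = v$ as $\undertilde{v} = v + \phi$ with $\phi \in \mathcal{N}(A)$. Your version is slightly more explicit in verifying that $\undertilde{\bar{B}}$ reduces to $\bar{B}$ and in spelling out the convergence argument via \cref{Prop:iterative_equiv}, but the structure is identical.
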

\begin{proof}
In \cref{Thm:error_propagation_aux}, we set
\begin{equation}
\label{aux_setting_singular}
\begin{aligned}
V \leftarrow \mathcal{R} (A), \quad
\undertilde{V} \leftarrow V, \quad
\Pi \leftarrow Q, \quad
A \leftarrow A_Q, \quad
\undertilde{B} \leftarrow B.
\end{aligned}
\end{equation}
Then we obtain that the iterative method~\eqref{iterative_singular} converges, which is equivalent to that the iterative method~\eqref{iterative} converges, and that
\begin{equation*}
\| I_Q - B_Q A_Q \|_{A_Q}^2 = 1 - \left( \sup_{v_Q \in \mathcal{R} (A),\ \| v_Q \|_{A_Q} = 1} \inf_{v \in V,\ Qv = v_Q} (\bar{B}^{-1} v, v ) \right)^{-1}.
\end{equation*}
The proof is completed by invoking~\eqref{error_propagation_singular} and noting that $Qv = v_Q$ if and only if there exists $\phi \in \mathcal{N}(A)$ such that $v = v_Q + \phi$.
\end{proof}

Next, we consider the $B$-preconditioned conjugate gradient method for solving the semi-SPD linear system~\eqref{model}.
When $B$ is SPD, from~\eqref{PCG} we readily observe that if $\{ u^m \}$ is the sequence generated by the $B$-preconditioned conjugate gradient method for solving~\eqref{model}, then $\{ Q u^m \}$ coincides with the sequence generated by the $B_Q$-preconditioned conjugate gradient method for solving~\eqref{model_singular} with $u_Q^{0} = Q u^0$ (cf.~\cref{Prop:model_equiv_singular}).
Therefore,~\eqref{PCG_estimate} implies that the convergence rate of the $B$-preconditioned conjugate gradient method for solving~\eqref{model} is determined by the extremal eigenvalues of $B_Q A_Q$.
Since $B_Q A_Q = Q B A Q^t$ on $\mathcal{R}(A)$, we obtain
\begin{equation}
\label{lambda_singular}
\lambda_{\min} (B_Q A_Q) = \lambda_{\min} (BA), \quad
\lambda_{\max} (B_Q A_Q) = \lambda_{\max} (BA),
\end{equation}
where, for semi-SPD operators, $\lambda_{\min}$  denotes the smallest nonzero eigenvalue.
Hence, estimating $\lambda_{\min} (BA)$ and $\lambda_{\max} (BA)$ suffices to analyze the convergence of the $B$-preconditioned conjugate gradient method.

Analogous to \cref{Thm:error_propagation_singular}, and using~\eqref{lambda_singular}, we obtain the following estimates for $\lambda_{\min} (BA)$ and $\lambda_{\max} (BA)$ as a special case of \cref{Thm:condition_number_aux}.

\begin{theorem}
\label{Thm:condition_number_singular}
In the linear system~\eqref{model}, suppose that $A$ is semi-SPD.
If the operator $B$ given in~\eqref{iterative} is SPD, then we have
\begin{align*}
\lambda_{\min} (B A) &= \left( \sup_{ v \in \mathcal{R} (A),\ | v |_A = 1}  \inf_{\phi \in \mathcal{N} (A)} ( B^{-1} (v + \phi), v + \phi ) \right)^{-1}, \\
\lambda_{\max} (B A) &= \left( \inf_{ v \in \mathcal{R} (A),\ | v |_A = 1} \inf_{\phi \in \mathcal{N} (A)} ( B^{-1} (v + \phi), v + \phi ) \right)^{-1}.
\end{align*}
\end{theorem}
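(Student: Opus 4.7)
The plan is to mirror the proof of \cref{Thm:error_propagation_singular}, replacing \cref{Thm:error_propagation_aux} by its condition-number counterpart \cref{Thm:condition_number_aux}, and then to translate the resulting spectral information back from the restricted problem~\eqref{model_singular} to the original problem~\eqref{model}. The conceptual point, already exploited above, is that the semi-SPD system~\eqref{model} is itself the auxiliary system of the SPD system~\eqref{model_singular} via the surjective projection $Q \colon V \to \mathcal{R}(A)$: since $A = Q^t A_Q Q$, the auxiliary space machinery applies symmetrically in this direction, and this is precisely what makes \cref{Thm:condition_number_aux} directly available.

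Concretely, I would invoke \cref{Thm:condition_number_aux} under the substitutions in~\eqref{aux_setting_singular}, namely $V \leftarrow \mathcal{R}(A)$, $\undertilde{V} \leftarrow V$, $\Pi \leftarrow Q$, $A \leftarrow A_Q$, and $\undertilde{B} \leftarrow B$. Under these substitutions, the inner operator $\Pi \undertilde{B} \Pi^t$ becomes $B_Q = Q B Q^t$, and the hypothesis that the new $\undertilde{B}$ be SPD is exactly the assumed SPD-ness of $B$. The theorem therefore yields
\begin{equation*}
\lambda_{\min}(B_Q A_Q) = \left( \sup_{v_Q \in \mathcal{R}(A),\ \|v_Q\|_{A_Q} = 1} \inf_{v \in V,\ Qv = v_Q} (B^{-1} v, v) \right)^{-1},
\end{equation*}
together with the analogous identity in which the outer $\sup$ is replaced by $\inf$ for $\lambda_{\max}(B_Q A_Q)$.

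The remaining task is essentially bookkeeping. I would combine the eigenvalue identity~\eqref{lambda_singular}, which gives $\lambda_{\min}(B_Q A_Q) = \lambda_{\min}(BA)$ and $\lambda_{\max}(B_Q A_Q) = \lambda_{\max}(BA)$, with the facts that $\|v_Q\|_{A_Q} = |v_Q|_A$ for $v_Q \in \mathcal{R}(A)$ and that the constraint $Qv = v_Q$ is equivalent to $v - v_Q \in \mathcal{N}(Q) = \mathcal{R}(A)^\perp = \mathcal{N}(A)$, i.e., to $v = v_Q + \phi$ for some $\phi \in \mathcal{N}(A)$. Reparametrizing the inner infimum via $\phi = v - v_Q$ produces the stated formulas verbatim. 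I do not anticipate any genuine obstacle: the argument is a single specialization of \cref{Thm:condition_number_aux} followed by a change of variables, and the only point worth double-checking is that the identification $\mathcal{N}(Q) = \mathcal{N}(A)$, which rests on the symmetry of $A$, is valid in the semi-SPD setting, as it is.
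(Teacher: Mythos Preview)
Your proposal is correct and follows essentially the same approach as the paper: invoke \cref{Thm:condition_number_aux} with the substitutions~\eqref{aux_setting_singular}, then use~\eqref{lambda_singular} and the reparametrization $Qv = v_Q \Leftrightarrow v = v_Q + \phi$ with $\phi \in \mathcal{N}(A)$ to obtain the stated identities. The paper's proof is simply a terser statement of exactly these two steps.
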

\begin{proof}
In \cref{Thm:condition_number_aux}, we adopt the setting~\eqref{aux_setting_singular}.  
By invoking~\eqref{lambda_singular}, we obtain the desired result.
\end{proof}

In view of \cref{Thm:error_propagation_aux_general,Thm:condition_number_aux_general}, the requirements in \cref{Thm:error_propagation_singular,Thm:condition_number_singular} that $\bar{B}$ and $B$ be SPD can be relaxed.  
It suffices to assume that $\bar{B}$ and $B$ are SPD only on $\mathcal{R}(A)$; see \cref{Thm:error_propagation_singular_general,Thm:condition_number_singular_general}, respectively.
Relevant discussions can be found in, e.g.,~\cite{Hayami:2018,Kaasschieter:1988}.

\begin{theorem}
\label{Thm:error_propagation_singular_general}
In the linear system~\eqref{model}, suppose that $A$ is semi-SPD.
If the symmetrized operator
\begin{equation}
\label{symmetrized_singular}
\bar{B}_Q = Q \bar{B} Q^t \colon \mathcal{R}(A) \to \mathcal{R}(A)
\end{equation} is SPD, then the iterative method~\eqref{iterative} converges, and it satisfies
\begin{equation*}
\lvert I - BA \rvert_A^2 
= 1 - \left( \sup_{v \in \mathcal{R}(A),\ \lvert v \rvert_A = 1} 
( \bar{B}_Q^{-1} v, v ) \right)^{-1}.
\end{equation*}
\end{theorem}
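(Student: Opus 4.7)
The plan is to obtain this result as a direct application of \cref{Thm:error_propagation_aux_general}, using the same viewpoint employed in the proof of \cref{Thm:error_propagation_singular}: the original semi-SPD system~\eqref{model} is regarded as an auxiliary system for the restricted SPD system~\eqref{model_singular} via the surjection $Q \colon V \to \mathcal{R}(A)$. The only difference from \cref{Thm:error_propagation_singular} is that we no longer assume $\bar{B}$ is SPD on all of $V$; instead, only its compression to $\mathcal{R}(A)$ is SPD, which is precisely the setting that motivated the relaxation in \cref{Thm:error_propagation_aux_general}.

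Concretely, I would apply \cref{Thm:error_propagation_aux_general} under the substitutions
\[
V \leftarrow \mathcal{R}(A),\qquad \undertilde{V} \leftarrow V,\qquad \Pi \leftarrow Q,\qquad A \leftarrow A_Q,\qquad \undertilde{B} \leftarrow B,
\]
together with the choice of subspace $\undertilde{W} \leftarrow \mathcal{R}(A) \subseteq V = \undertilde{V}$. Under these substitutions, the auxiliary operator is $\undertilde{A} = Q^t A_Q Q = A$, so $\mathcal{R}(\undertilde{A}) = \mathcal{R}(A) = \undertilde{W}$ and the hypothesis $\undertilde{W} \supset \mathcal{R}(\undertilde{A})$ is satisfied. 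The orthogonal projection $\undertilde{Q}$ onto $\undertilde{W}$ coincides with $Q$, so the symmetrized operator in~\eqref{symmetrized_aux_general} becomes
\[
\undertilde{\bar{B}}_{\undertilde{Q}} = Q(B + B^t - B^t A B)Q^t = Q \bar{B} Q^t = \bar{B}_Q,
\]
which is SPD by assumption. Hence \cref{Thm:error_propagation_aux_general} applies and yields convergence of~\eqref{iterative_singular}, which by \cref{Prop:iterative_equiv} is equivalent to convergence of~\eqref{iterative} in the sense of \cref{Def:convergence_singular}.

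The final step is to unwind the formula provided by \cref{Thm:error_propagation_aux_general}, namely
\[
\|I_Q - B_Q A_Q\|_{A_Q}^2 = 1 - \left( \sup_{v \in \mathcal{R}(A),\ \|v\|_{A_Q} = 1} \inf_{\undertilde{v} \in \mathcal{R}(A),\ Q\undertilde{v} = v} (\bar{B}_Q^{-1} \undertilde{v}, \undertilde{v}) \right)^{-1}.
\]
Two simplifications finish the argument: first, for $\undertilde{v} \in \mathcal{R}(A)$ we have $Q\undertilde{v} = \undertilde{v}$, so the constraint $Q\undertilde{v} = v$ forces $\undertilde{v} = v$ and the inner infimum collapses to $(\bar{B}_Q^{-1} v, v)$; second, for $v \in \mathcal{R}(A)$ we have $\|v\|_{A_Q} = |v|_A$, while~\eqref{error_propagation_singular} identifies $\|I_Q - B_Q A_Q\|_{A_Q}$ with $|I - BA|_A$. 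Together these give the claimed identity.

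I do not foresee any essential obstacle: the proof is a bookkeeping exercise of instantiating \cref{Thm:error_propagation_aux_general} correctly. The only mildly delicate point is to verify that the choice $\undertilde{W} = \mathcal{R}(A)$ makes $\undertilde{Q}$ identical to $Q$ as an operator on $V$, so that the symmetrized operator of the general auxiliary framework coincides with $\bar{B}_Q$ as defined in~\eqref{symmetrized_singular}; once this identification is made, the rest is immediate.
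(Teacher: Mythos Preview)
Your proposal is correct and follows exactly the route the paper indicates: the paper presents \cref{Thm:error_propagation_singular_general} as the relaxation of \cref{Thm:error_propagation_singular} obtained ``in view of \cref{Thm:error_propagation_aux_general},'' and your instantiation of \cref{Thm:error_propagation_aux_general} with the substitutions~\eqref{aux_setting_singular} together with $\undertilde{W}=\mathcal{R}(A)$ (so that $\undertilde{Q}=Q$ and $\undertilde{\bar{B}}_{\undertilde{Q}}=\bar{B}_Q$) carries this out precisely. The collapse of the inner infimum and the use of~\eqref{error_propagation_singular} are exactly the simplifications needed, and your appeal to \cref{Prop:iterative_equiv} (applied with $A\leftarrow A_Q$, which is SPD) to pass convergence from~\eqref{iterative_singular} to~\eqref{iterative} matches the paper's own reasoning in \cref{Sec:Iterative_singular}.
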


\begin{theorem}
\label{Thm:condition_number_singular_general}
In the linear system~\eqref{model}, suppose that $A$ is semi-SPD.
If the operator $B_Q$ given in~\eqref{iterative_singular} is SPD, then we have
\begin{align*}
\lambda_{\min} (B A) &= \left( \sup_{ v \in \mathcal{R} (A),\ | v |_A = 1} ( B_Q^{-1} v, v ) \right)^{-1}, \\
\lambda_{\max} (B A) &= \left( \inf_{ v \in \mathcal{R} (A),\ | v |_A = 1} ( B_Q^{-1} v, v ) \right)^{-1}.
\end{align*}
\end{theorem}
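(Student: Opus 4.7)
The plan is to establish \cref{Thm:condition_number_singular_general} as a direct corollary of \cref{Thm:condition_number_aux_general}, mirroring exactly the way \cref{Thm:condition_number_singular} was derived from \cref{Thm:condition_number_aux}. The unifying observation of \cref{Sec:Iterative_singular} is that the semi-SPD system~\eqref{model} is itself an auxiliary system for the reduced SPD system~\eqref{model_singular}, with the surjection $Q \colon V \to \mathcal{R}(A)$. The new twist in the present theorem is that $B$ is required to be SPD only after restriction to $\mathcal{R}(A)$, so I would invoke the more flexible auxiliary space result \cref{Thm:condition_number_aux_general} that permits a subspace $\undertilde{W} \subset \undertilde{V}$.

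First, I would apply \cref{Thm:condition_number_aux_general} under the identifications~\eqref{aux_setting_singular}: $V \leftarrow \mathcal{R}(A)$, $\undertilde{V} \leftarrow V$, $\Pi \leftarrow Q$, $A \leftarrow A_Q$, and $\undertilde{B} \leftarrow B$. Under these substitutions, the auxiliary operator $\undertilde{A}$ becomes $Q^t A_Q Q$, whose range, viewed inside $V$, is exactly $\mathcal{R}(A)$.

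Next, I would take $\undertilde{W} = \mathcal{R}(A) \subset V$, so that $\undertilde{W} \supset \mathcal{R}(\undertilde{A})$ holds (in fact with equality), and the orthogonal projection $\undertilde{Q} \colon V \to \undertilde{W}$ coincides with $Q$. With this choice, the operator $\undertilde{B}_{\undertilde{Q}}$ in~\eqref{B_aux_general} collapses to $Q B Q^t = B_Q$, so the SPD hypothesis of \cref{Thm:condition_number_aux_general} matches the assumption of the theorem. Moreover, the constraint $\Pi \undertilde{v} = v$ reduces to $Q Q^t \undertilde{v} = v$ for $\undertilde{v}, v \in \mathcal{R}(A)$, which forces $\undertilde{v} = v$ and removes the inner infimum entirely. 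This is the key simplification compared with \cref{Thm:condition_number_singular}, where the $\undertilde{v}$-infimum manifested as the infimum over $\phi \in \mathcal{N}(A)$.

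Finally, I would translate the output. \cref{Thm:condition_number_aux_general} now yields identities for $\lambda_{\min}(B_Q A_Q)$ and $\lambda_{\max}(B_Q A_Q)$ in terms of $B_Q^{-1}$, and~\eqref{lambda_singular} already equates these extremal eigenvalues with $\lambda_{\min}(BA)$ and $\lambda_{\max}(BA)$, respectively; noting that $|v|_{A_Q} = |v|_A$ for $v \in \mathcal{R}(A)$ gives the stated formulas. The proof is essentially a bookkeeping exercise under the auxiliary space dictionary, so no deep obstacle is anticipated; the only care needed is to track the two projections $Q$ and $\undertilde{Q}$, which conveniently coincide under the chosen substitutions, and to verify that $\mathcal{R}(\undertilde{A}) = \mathcal{R}(A)$ after the identifications so that the subspace condition is satisfied.
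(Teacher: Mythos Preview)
Your proposal is correct and follows exactly the approach the paper intends: the text preceding \cref{Thm:error_propagation_singular_general,Thm:condition_number_singular_general} signals that these are obtained from \cref{Thm:error_propagation_aux_general,Thm:condition_number_aux_general} under the identifications~\eqref{aux_setting_singular}, and your choice $\undertilde{W} = \mathcal{R}(A)$ with $\undertilde{Q} = Q$ is precisely what is required. The only point worth tightening is your appeal to~\eqref{lambda_singular}: in the paper that identity is stated in the context of $B$ being SPD, whereas here only $B_Q$ is assumed SPD; you should note that the block lower-triangular structure of $BA$ with respect to $V = \mathcal{R}(A) \oplus \mathcal{N}(A)$ (diagonal blocks $B_Q A_Q$ and $0$) yields~\eqref{lambda_singular} without any positivity assumption on $B$ itself.
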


We note that \cref{Thm:error_propagation_singular_general} is consistent with~\cite[Lemma~2.2]{WLXZ:2008}.  
Some equivalent conditions ensuring that the operator $\bar{B}_Q$ is SPD can be found in~\cite[Lemma~2.1 and Theorem~2.1]{WLXZ:2008}.

\section{Auxiliary space theory for semidefinite linear systems}
\label{Sec:Aux_singular}
In this section, we consider the most general setting that combines those in \cref{Sec:Aux,Sec:Iterative_singular}.  
Namely, in the linear system~\eqref{model}, we assume that $A$ is semi-SPD.  
Furthermore, we assume that the operator $B$ in~\eqref{iterative} is defined in terms of the auxiliary space as in~\eqref{iterative_aux}, i.e., $B = \Pi \undertilde{B} \Pi^t$.

Within this setting, \cref{Thm:error_propagation_aux_singular} characterizes the error propagation operator $I - BA$ in terms of the auxiliary system~\eqref{model_aux}.  
Interestingly, although \cref{Thm:error_propagation_aux_singular} generalizes both \cref{Thm:error_propagation_aux,Thm:error_propagation_singular}, it can in fact be derived from \cref{Thm:error_propagation_aux}, its own special case.

\begin{theorem}
\label{Thm:error_propagation_aux_singular}
In the linear system~\eqref{model}, suppose that $A$ is semi-SPD.
If the symmetrized operator $\undertilde{\bar{B}}$ given in~\eqref{symmetrized_aux} is SPD, then the iterative method~\eqref{iterative} converges, and it satisfies
\begin{equation*}
| I - B A |_A^2 = 1 - \left( \sup_{v \in \mathcal{R} (A),\ | v |_A = 1} \inf_{\phi \in \mathcal{N} (A)} \inf_{\undertilde{v} \in \undertilde{V},\ \Pi \undertilde{v} = v + \phi} (\undertilde{\bar{B}}^{-1} \undertilde{v}, \undertilde{v} ) \right)^{-1}.
\end{equation*}
\end{theorem}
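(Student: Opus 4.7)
The plan is to derive Theorem 5.1 from Theorem 3.4 by composing the two auxiliary-space reductions already exhibited in Sections 3 and 4. I view the semi-SPD system (1.1) via its SPD restriction (4.1) on $\mathcal{R}(A)$, and then treat $\undertilde{V}$ as an auxiliary space not for (1.1) itself but for this restricted SPD system, with the surjection given by the composition $Q\Pi \colon \undertilde{V} \to \mathcal{R}(A)$. The key observation is that this composition makes $(A_Q, B_Q)$ on $\mathcal{R}(A)$ fit the auxiliary-space template of Section 3 with auxiliary data $(\undertilde{A}, \undertilde{B})$ on $\undertilde{V}$.

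Concretely, I would first check the compatibility identities
\[
(Q\Pi)^t A_Q (Q\Pi) = \Pi^t A \Pi = \undertilde{A}, \qquad (Q\Pi)\, \undertilde{B}\, (Q\Pi)^t = Q B Q^t = B_Q,
\]
where $Q^t$ denotes the natural embedding $\mathcal{R}(A) \hookrightarrow V$ silently suppressed throughout Section 4. Since $\undertilde{\bar{B}}$ is SPD by hypothesis, Theorem 3.4 applied with the substitutions $V \leftarrow \mathcal{R}(A)$, $A \leftarrow A_Q$, $B \leftarrow B_Q$, $\Pi \leftarrow Q\Pi$ immediately yields convergence of the iterative method (4.3) together with the identity
\[
\| I_Q - B_Q A_Q \|_{A_Q}^2 = 1 - \left( \sup_{v \in \mathcal{R}(A),\ \|v\|_{A_Q} = 1} \ \inf_{\undertilde{v} \in \undertilde{V},\ Q\Pi \undertilde{v} = v} (\undertilde{\bar{B}}^{-1} \undertilde{v}, \undertilde{v}) \right)^{-1}.
\]

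To transfer this back to the original semi-SPD system, I would invoke the equivalences already established in Section 4: convergence of (4.3) is equivalent to convergence of (1.3) in the sense of Definition 4.2 via Proposition 3.4, and identity (4.4) gives $\| I_Q - B_Q A_Q \|_{A_Q} = |I - BA|_A$. The relation $\|v\|_{A_Q} = |v|_A$ on $\mathcal{R}(A)$ is immediate from the definition of $A_Q$. The final step is to rewrite the constraint on $\undertilde{v}$: since $A$ is symmetric, $\mathcal{N}(Q) = \mathcal{R}(A)^\perp = \mathcal{N}(A)$, so for $v \in \mathcal{R}(A)$ the condition $Q\Pi \undertilde{v} = v$ is equivalent to the existence of $\phi \in \mathcal{N}(A)$ with $\Pi \undertilde{v} = v + \phi$. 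Splitting the infimum over $\undertilde{v}$ according to this $\phi$ produces exactly the double infimum in the claimed identity. There is no serious obstacle; the only care needed is the bookkeeping around the embedding $Q^t$, which the paper already warns is suppressed when no ambiguity arises.
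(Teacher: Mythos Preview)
Your proposal is correct and follows essentially the same route as the paper's proof: both apply \cref{Thm:error_propagation_aux} with the substitutions $V \leftarrow \mathcal{R}(A)$, $A \leftarrow A_Q$, $\Pi \leftarrow Q\Pi$, $\undertilde{B} \leftarrow \undertilde{B}$, then invoke~\eqref{error_propagation_singular} and the equivalence of convergence via \cref{Prop:iterative_equiv}, and finally rewrite the constraint $Q\Pi\undertilde{v} = v$ as $\Pi\undertilde{v} = v + \phi$ for some $\phi \in \mathcal{N}(A)$. The only cosmetic difference is that you spell out both compatibility identities $(Q\Pi)^t A_Q (Q\Pi) = \undertilde{A}$ and $(Q\Pi)\undertilde{B}(Q\Pi)^t = B_Q$, whereas the paper records only $(Q\Pi)\undertilde{\bar{B}}(Q\Pi)^t = \bar{B}_Q$; you should also state explicitly that $Q\Pi$ is surjective, as the paper does.
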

\begin{proof}
We can easily verify that $Q \Pi \colon \undertilde{V} \to \mathcal{R} (A)$ is surjective and that
\begin{equation*}
(Q \Pi) \undertilde{\bar{B}} (Q \Pi)^t = \bar{B}_Q,
\end{equation*}
where $\bar{B}_Q$ was given in~\eqref{symmetrized_singular}.
Hence, by setting
\begin{equation*}
\begin{aligned}
V \leftarrow \mathcal{R} (A), \quad
\undertilde{V} \leftarrow \undertilde{V}, \quad
\Pi \leftarrow Q \Pi , \quad
A \leftarrow A_Q, \quad
\undertilde{B} \leftarrow \undertilde{B}
\end{aligned}
\end{equation*}
in \cref{Thm:error_propagation_aux}, we deduce that the iterative method~\eqref{iterative_singular} converges.
By \cref{Prop:iterative_equiv}, this is equivalent to the convergence of~\eqref{iterative}.
Moreover, we have
\begin{equation*}
\| I_Q - B_Q A_Q \|_{A_Q}^2 = 1 - \left( \sup_{v \in \mathcal{R} (A),\ \| v \|_{A_Q} = 1} \inf_{\undertilde{v} \in \undertilde{V},\ Q \Pi \undertilde{v} = v} ( \undertilde{\bar{B}}^{-1} \undertilde{v}, \undertilde{v} ) \right)^{-1}.
\end{equation*}
The proof is completed by invoking~\eqref{error_propagation_singular} and noting that $Q \Pi \undertilde{v} = v$ if and only if there exists $\phi \in \mathcal{N}(A)$ such that $\Pi \undertilde{v} = v + \phi$.
\end{proof}

Similarly, when $\undertilde{B}$ is SPD on $\undertilde{V}$, 
\cref{Thm:condition_number_aux_singular} 
provides characterizations of the extremal eigenvalues of $BA$ in terms of the auxiliary system~\eqref{model_aux}.  
The proof is omitted, as it follow from an argument analogous to that of \cref{Thm:error_propagation_aux_singular}.

\begin{theorem}
\label{Thm:condition_number_aux_singular}
In the linear system~\eqref{iterative}, suppose that $A$ is semi-SPD.
If the operator $\undertilde{B}$ given in~\eqref{iterative_aux} is SPD, then we have
\begin{align*}
\lambda_{\min} (B A) &= \left( \sup_{ v \in \mathcal{R} (A),\ | v |_A = 1} \inf_{\phi \in \mathcal{N} (A)}\inf_{\undertilde{v} \in \undertilde{V},\ \Pi \undertilde{v} = v + \phi } ( \undertilde{B}^{-1} \undertilde{v}, \undertilde{v} ) \right)^{-1}, \\
\lambda_{\max} (B A) &= \left( \inf_{ v \in \mathcal{R} (A),\ | v |_A = 1} \inf_{\phi \in \mathcal{N} (A)} \inf_{ \undertilde{v} \in \undertilde{V},\ \Pi \undertilde{v} = v + \phi } ( \undertilde{B}^{-1} \undertilde{v}, \undertilde{v} ) \right)^{-1}.
\end{align*}
\end{theorem}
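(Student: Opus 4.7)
The plan is to mirror the proof of \cref{Thm:error_propagation_aux_singular} step by step, invoking \cref{Thm:condition_number_aux} in place of \cref{Thm:error_propagation_aux} at the critical juncture. The template is dictated by the fact that \cref{Thm:condition_number_aux_singular} is the $\lambda_{\min}/\lambda_{\max}$ analogue of \cref{Thm:error_propagation_aux_singular}, and the same change of variables from~$V$ to~$\mathcal{R}(A)$ should convert the semi-SPD problem into an SPD one on the range.

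First, I would observe that $Q\Pi \colon \undertilde{V} \to \mathcal{R}(A)$ is surjective: $\Pi$ maps $\undertilde{V}$ onto $V$ and $Q$ restricts to the identity on $\mathcal{R}(A)$, so the composition hits every element of $\mathcal{R}(A)$. Next, I would compute
\begin{equation*}
(Q\Pi)\,\undertilde{B}\,(Q\Pi)^t = Q\,\Pi \undertilde{B} \Pi^t\, Q^t = Q B Q^t = B_Q,
\end{equation*}
so $B_Q$ is the auxiliary-space representation of $\undertilde{B}$ with respect to the surjection $Q\Pi$. Since $\undertilde{B}$ is SPD by hypothesis, \cref{Lem:aux_space_lemma} guarantees that $B_Q$ is SPD on $\mathcal{R}(A)$, which is exactly the assumption required to invoke the condition-number theory on $\mathcal{R}(A)$.

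With this in place, the main step is to apply \cref{Thm:condition_number_aux} with the substitutions
\begin{equation*}
V \leftarrow \mathcal{R}(A), \quad \undertilde{V} \leftarrow \undertilde{V}, \quad \Pi \leftarrow Q\Pi, \quad A \leftarrow A_Q, \quad \undertilde{B} \leftarrow \undertilde{B},
\end{equation*}
which yields expressions for $\lambda_{\min}(B_Q A_Q)$ and $\lambda_{\max}(B_Q A_Q)$ as the reciprocals of a sup-inf and an inf-inf, respectively, over $\{ v \in \mathcal{R}(A) : |v|_A = 1\}$ and $\{\undertilde{v} \in \undertilde{V} : Q\Pi\undertilde{v} = v\}$. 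Equation~\eqref{lambda_singular} then transfers these identities from $B_Q A_Q$ to $B A$.

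The last step is to rewrite the constraint $Q\Pi\undertilde{v} = v$. Since $Q$ is the orthogonal projection onto $\mathcal{R}(A)$ and $\mathcal{R}(A)^\perp = \mathcal{N}(A)$ (because $A$ is symmetric), we have $Q\Pi\undertilde{v} = v$ if and only if $\Pi\undertilde{v} - v \in \mathcal{N}(A)$, that is, $\Pi\undertilde{v} = v + \phi$ for some $\phi \in \mathcal{N}(A)$. Splitting the single constraint into the pair of infima over $\phi \in \mathcal{N}(A)$ and over $\{\undertilde{v} \in \undertilde{V} : \Pi\undertilde{v} = v+\phi\}$ delivers the advertised formulas. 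I do not anticipate a real obstacle: every ingredient is already in place, and the only mild bookkeeping is the reformulation of the constraint, which is identical to the one used in the proof of \cref{Thm:error_propagation_aux_singular}.
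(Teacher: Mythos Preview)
Your proposal is correct and follows exactly the approach the paper intends: the paper omits the proof with the remark that it follows from an argument analogous to that of \cref{Thm:error_propagation_aux_singular}, and your write-up carries out precisely that analogy, substituting \cref{Thm:condition_number_aux} for \cref{Thm:error_propagation_aux} and invoking~\eqref{lambda_singular}.
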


We also present counterparts of \cref{Thm:error_propagation_aux_general,Thm:condition_number_aux_general} for the semi-SPD case, given in \cref{Thm:error_propagation_aux_singular_general,Thm:condition_number_singular_general}, respectively.

\begin{theorem}
\label{Thm:error_propagation_aux_singular_general}
In the linear system~\eqref{model}, suppose that $A$ is semi-SPD.
Let $\undertilde{W}$ be a subspace of $\undertilde{V}$ with $\undertilde{W} \supset \mathcal{R} (\undertilde{A})$, and let $\undertilde{Q} \colon \undertilde{V} \to \undertilde{W}$ be the orthogonal projection onto $\undertilde{W}$.
If the symmetrized operator $\undertilde{\bar{B}}_{\undertilde{Q}}$ given in~\eqref{symmetrized_aux_general} is SPD, then the iterative method~\eqref{iterative} converges, and it satisfies
\begin{equation*}
| I - B A |_A^2 = 1 - \left( \sup_{v \in \mathcal{R} (A),\ | v |_A = 1} \inf_{\phi \in \mathcal{N} (A)} \inf_{\undertilde{v} \in \undertilde{W},\ \Pi \undertilde{v} = v + \phi} (\undertilde{\bar{B}}_{\undertilde{Q}}^{-1} \undertilde{v}, \undertilde{v} ) \right)^{-1}.
\end{equation*}
\end{theorem}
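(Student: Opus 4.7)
The plan is to adapt the reduction used in the proof of \cref{Thm:error_propagation_aux_singular}: I would restrict everything to the range of $A$, where the problem becomes SPD, and then invoke \cref{Thm:error_propagation_aux_general} in place of \cref{Thm:error_propagation_aux}. The central observation is that $Q\Pi \colon \undertilde{V} \to \mathcal{R}(A)$ is surjective (as a composition of surjections), and under the resulting identification the restricted auxiliary data match the original ones: a direct computation gives $(Q\Pi)^t A_Q (Q\Pi) = \Pi^t A \Pi = \undertilde{A}$ and $(Q\Pi)\, \undertilde{B}\, (Q\Pi)^t = Q B Q^t = B_Q$. Consequently the hypothesis $\undertilde{W} \supset \mathcal{R}(\undertilde{A})$ transfers unchanged, and the symmetrized operator built from the substituted problem coincides exactly with $\undertilde{\bar{B}}_{\undertilde{Q}}$ from~\eqref{symmetrized_aux_general}, which is SPD by assumption.

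With these identifications in hand, I would apply \cref{Thm:error_propagation_aux_general} with the substitutions $V \leftarrow \mathcal{R}(A)$, $\Pi \leftarrow Q\Pi$, $A \leftarrow A_Q$, $\undertilde{B} \leftarrow \undertilde{B}$, keeping $\undertilde{V}$, $\undertilde{W}$, and $\undertilde{Q}$ unchanged. This immediately yields convergence of the restricted iteration~\eqref{iterative_singular}, which by \cref{Prop:iterative_equiv} together with the discussion preceding~\eqref{error_propagation_singular} is equivalent to convergence of~\eqref{iterative} in the semi-SPD sense, as well as the identity
\begin{equation*}
\| I_Q - B_Q A_Q \|_{A_Q}^2 = 1 - \left( \sup_{v \in \mathcal{R}(A),\ \| v \|_{A_Q} = 1}\ \inf_{\undertilde{v} \in \undertilde{W},\ (Q\Pi)\undertilde{v} = v} (\undertilde{\bar{B}}_{\undertilde{Q}}^{-1} \undertilde{v}, \undertilde{v}) \right)^{-1}.
\end{equation*}

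Finally, I would translate this back to the original space using the same elementary observations that appear throughout \cref{Sec:Iterative_singular}: the left-hand side equals $| I - BA |_A^2$ by~\eqref{error_propagation_singular}; on $\mathcal{R}(A)$ one has $\| v \|_{A_Q} = | v |_A$; and since $\mathcal{N}(Q) = \mathcal{R}(A)^\perp = \mathcal{N}(A)$, the constraint $(Q\Pi)\undertilde{v} = v$ is equivalent to the existence of some $\phi \in \mathcal{N}(A)$ with $\Pi \undertilde{v} = v + \phi$, which reindexes the inner infimum as the advertised double infimum over $\phi \in \mathcal{N}(A)$ and $\undertilde{v} \in \undertilde{W}$ with $\Pi \undertilde{v} = v + \phi$. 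I do not anticipate a genuine obstacle: the entire argument is bookkeeping in the spirit of \cref{Thm:error_propagation_aux_singular}, and the only item deserving an explicit line of justification is the identification of the substituted symmetrized operator with the given $\undertilde{\bar{B}}_{\undertilde{Q}}$, which follows immediately from $(Q\Pi)^t A_Q (Q\Pi) = \undertilde{A}$.
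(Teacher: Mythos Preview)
Your proposal is correct and follows precisely the approach implicit in the paper: the paper omits the proof of this theorem, but your argument is the natural combination of the proof of \cref{Thm:error_propagation_aux_singular} (restricting to $\mathcal{R}(A)$ via $Q$) with \cref{Thm:error_propagation_aux_general} in place of \cref{Thm:error_propagation_aux}. All the identifications you list---in particular $(Q\Pi)^t A_Q (Q\Pi)=\undertilde{A}$, so that both the hypothesis $\undertilde{W}\supset\mathcal{R}(\undertilde{A})$ and the symmetrized operator $\undertilde{\bar{B}}_{\undertilde{Q}}$ transfer verbatim---are correct, and the final reindexing of the constraint $(Q\Pi)\undertilde{v}=v$ as $\Pi\undertilde{v}=v+\phi$ with $\phi\in\mathcal{N}(A)$ is exactly as in the proof of \cref{Thm:error_propagation_aux_singular}.
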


\begin{theorem}
\label{Thm:condition_number_aux_singular_general}
In the linear system~\eqref{iterative}, suppose that $A$ is semi-SPD.
Let $\undertilde{W}$ be a subspace of $\undertilde{V}$ with $\undertilde{W} \supset \mathcal{R} (\undertilde{A})$, and let $\undertilde{Q} \colon \undertilde{V} \to \undertilde{W}$ be the orthogonal projection onto $\undertilde{W}$.
If the operator $\undertilde{B}_{\undertilde{Q}}$ given in~\eqref{B_aux_general} is SPD, then we have
\begin{align*}
\lambda_{\min} (B A) &= \left( \sup_{ v \in \mathcal{R} (A),\ | v |_A = 1} \inf_{\phi \in \mathcal{N} (A)}\inf_{\undertilde{v} \in \undertilde{W},\ \Pi \undertilde{v} = v + \phi } ( \undertilde{B}_{\undertilde{Q}}^{-1} \undertilde{v}, \undertilde{v} ) \right)^{-1}, \\
\lambda_{\max} (B A) &= \left( \inf_{ v \in \mathcal{R} (A),\ | v |_A = 1} \inf_{\phi \in \mathcal{N} (A)} \inf_{ \undertilde{v} \in \undertilde{W},\ \Pi \undertilde{v} = v + \phi } ( \undertilde{B}_{\undertilde{Q}}^{-1} \undertilde{v}, \undertilde{v} ) \right)^{-1}.
\end{align*}
\end{theorem}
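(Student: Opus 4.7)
The plan is to reduce the statement to its SPD counterpart, \cref{Thm:condition_number_aux_general}, via the same two-level compression already used in the proofs of \cref{Thm:error_propagation_aux_singular,Thm:error_propagation_aux_singular_general}. Specifically, I would first pass from the semi-SPD system on $V$ to the SPD reduced system~\eqref{model_singular} on $\mathcal{R}(A)$, and then view $\undertilde{V}$ (together with the subspace $\undertilde{W}$) as an auxiliary space for this reduced system via the composite surjection $Q \Pi \colon \undertilde{V} \to \mathcal{R}(A)$. This mirrors exactly the derivation of \cref{Thm:condition_number_aux_singular} from \cref{Thm:condition_number_aux}, with the subspace $\undertilde{W}$ carried along unchanged.

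Concretely, I would invoke \cref{Thm:condition_number_aux_general} with the substitutions $V \leftarrow \mathcal{R}(A)$, $A \leftarrow A_Q$, $\Pi \leftarrow Q\Pi$, and $\undertilde{V}, \undertilde{W}, \undertilde{Q}, \undertilde{B}$ unchanged. Verifying the hypotheses amounts to three short observations: $A_Q$ is SPD on $\mathcal{R}(A)$ by construction; $Q\Pi$ is surjective as a composition of surjective maps; and since $A = Q^t A_Q Q$, the induced auxiliary operator satisfies $(Q\Pi)^t A_Q (Q\Pi) = \Pi^t A \Pi = \undertilde{A}$, so that its range coincides with $\mathcal{R}(\undertilde{A})$ and the subspace hypothesis $\undertilde{W} \supset \mathcal{R}(\undertilde{A})$ transfers verbatim. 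The preconditioner induced on $\mathcal{R}(A)$ is $(Q\Pi)\undertilde{B}(Q\Pi)^t = Q B Q^t = B_Q$, whose compression $\undertilde{B}_{\undertilde{Q}}$ to $\undertilde{W}$ is SPD by hypothesis.

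\cref{Thm:condition_number_aux_general} then delivers identities for $\lambda_{\min}(B_Q A_Q)$ and $\lambda_{\max}(B_Q A_Q)$ in which the constraint reads $\undertilde{v} \in \undertilde{W}$ with $Q\Pi\undertilde{v} = v$. To finish, I would first use~\eqref{lambda_singular} to replace $\lambda_{\min}(B_Q A_Q)$ and $\lambda_{\max}(B_Q A_Q)$ by $\lambda_{\min}(BA)$ and $\lambda_{\max}(BA)$, respectively, and then rewrite the constraint: for $v \in \mathcal{R}(A)$, $Q\Pi\undertilde{v} = v$ is equivalent to $\Pi\undertilde{v} - v \in \mathcal{N}(Q) = \mathcal{N}(A)$, i.e., $\Pi\undertilde{v} = v + \phi$ for some $\phi \in \mathcal{N}(A)$. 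Splitting the single infimum over $\undertilde{v}$ into a nested infimum over $\phi \in \mathcal{N}(A)$ followed by an inner infimum over $\undertilde{v} \in \undertilde{W}$ with $\Pi\undertilde{v} = v + \phi$ yields exactly the stated form. No serious obstacle is anticipated; the only point worth flagging is the identity $(Q\Pi)^t A_Q (Q\Pi) = \undertilde{A}$, which is the mechanism by which the subspace hypothesis transfers intact between the two settings, but this follows immediately from $A = Q^t A_Q Q$.
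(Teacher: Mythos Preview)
Your proposal is correct and matches the paper's intended approach. The paper does not give an explicit proof of this theorem, but the surrounding text makes clear it is meant to follow by the same substitution device used in the proof of \cref{Thm:error_propagation_aux_singular}---namely, reducing to the SPD system on $\mathcal{R}(A)$ via $\Pi \leftarrow Q\Pi$, $A \leftarrow A_Q$, and then invoking the corresponding SPD result (here \cref{Thm:condition_number_aux_general}) with $\undertilde{W}$, $\undertilde{Q}$, and $\undertilde{B}$ unchanged; your verification that $(Q\Pi)^t A_Q (Q\Pi) = \undertilde{A}$, which is what makes the subspace hypothesis transfer, is exactly the point that needs checking.
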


\begin{remark}
\label{Rem:error_propagation_aux_singular}
In \cref{Thm:error_propagation_aux_singular,Thm:error_propagation_aux_singular_general,Thm:condition_number_aux_singular,Thm:condition_number_aux_singular_general}, the characterizations involve an infimum over the null space $\mathcal{N}(A)$, a feature that also appears in the literature on iterative methods for semidefinite problems; see, e.g.,~\cite{LP:2024b,LWXZ:2008}.
\end{remark}

\section{Application I: Subspace correction methods}
\label{Sec:MSC}
As a first application of the auxiliary space theory, we consider subspace correction methods~\cite{Xu:1992}, which provide a unified framework for a broad class of iterative methods, ranging from classical Jacobi and Gauss--Seidel methods to modern domain decomposition and multigrid methods. 
In particular, we show that sharp convergence results for subspace correction methods, such as the Xu--Zikatanov identity~\cite{XZ:2002}, even in the case of semidefinite linear problems~\cite{LWXZ:2008,WLXZ:2008}, can be derived from the auxiliary space theory.
We note that a similar approach was used in~\cite{Chen:2011} to derive the Xu--Zikatanov identity for SPD linear problems.

\subsection{Space decomposition and subspace correction}
We consider the general semi-SPD linear problem~\eqref{model}. 
Our fundamental assumption is that the solution space~\( V \) admits the following decomposition:
\begin{equation}
\label{space_decomposition}
V = \sum_{j=1}^J \Pi_j V_j,
\end{equation}
where each~\( V_j \), \( 1 \leq j \leq J \), is a finite-dimensional vector space, referred to as a \textit{local space}, and \( \Pi_j \colon V_j \to V \) is a linear operator. 
In particular, if~\( V_j \) is a subspace of~\( V \) and~\( \Pi_j \) is the natural embedding, then~\eqref{space_decomposition} reduces to the standard subspace decomposition considered in much of the literature on subspace correction methods; see, e.g.,~\cite{Xu:1992,XZ:2002}.

In each local space~\( V_j \), we define the local operator~\( A_j \colon V_j \to V_j \) by
\begin{equation}
\label{A_j}
A_j = \Pi_j^t A \Pi_j.
\end{equation}
Clearly, each $A_j$ is semi-SPD.
In addition, suppose we are given a linear operator
\begin{equation*}
R_j \colon V_j \to V_j,
\end{equation*}
which acts as an approximate inverse to~\( A_j \) and serves as a local solver. 
We will later discuss sufficient conditions on~\( R_j \) that ensure convergence of the subspace correction methods.

Given a current approximation~\( u^{\mathrm{old}} \) to the solution~\( u \), the next approximation~\( u^{\mathrm{new}} \) is obtained by solving a local problem on each local space:
\begin{equation*}
u^{\mathrm{new}} = u^{\mathrm{old}} + \Pi_j R_j \Pi_j^t (f - A u^{\mathrm{old}}).
\end{equation*}
Equivalently, the update defines an error propagation operator~\( T_j \colon V \to V \) satisfying
\begin{equation}
\label{T_j}
u - u^{\mathrm{new}} = (I - T_j)(u - u^{\mathrm{old}}), 
\quad
T_j = \Pi_j R_j \Pi_j^t A.
\end{equation}

\subsection{Parallel subspace correction method}
The parallel subspace correction~(PSC) preconditioner \( B_{\mathrm{PSC}} \colon V \to V \) is given by
\begin{equation}
\label{PSC}
B_{\mathrm{PSC}} = \sum_{j=1}^J \Pi_j R_j \Pi_j^t.
\end{equation}

We first demonstrate that PSC for solving~\eqref{model} can be interpreted as a simple block iterative method for solving a certain auxiliary linear system~(cf.~\cite{Chen:2011,JPX:2025}).
We define an auxiliary space $\undertilde{V}$ and an associated linear operator $\Pi \colon \undertilde{V} \to V$ as follows:
\begin{equation}
\label{V_undertilde}
\undertilde{V} = \prod_{j=1}^J \mathcal{R} (A_j),
\quad
\Pi= [ \Pi_1, \dots, \Pi_J ].
\end{equation}
It follows from~\eqref{space_decomposition} that $\Pi$ is surjective.
We have the corresponding auxiliary linear system~\eqref{model_aux}, sometimes called \textit{expanded system}~\cite{JPX:2025}.
Clearly, the linear operator $\undertilde{A}$ has the block structure
\begin{equation}
\label{A_undertilde}
\undertilde{A} = [A_{ij}]_{i,j=1}^J,
\quad \text{where} \quad
A_{ij} = \Pi_i^t A \Pi_j, \quad 1 \leq i,j \leq J.
\end{equation}
In particular, we have $A_{jj} = A_j$.
Let $\undertilde{D}$ and $\undertilde{L}$ denote the block diagonal and
block strictly lower triangular parts of $\undertilde{A}$, respectively.
Define
\begin{equation}
\label{R_undertilde}
\undertilde{R}
= \operatorname{diag}(R_{1}, \dots, R_{J}) \colon \undertilde{V} \to \undertilde{V}.
\end{equation}

In \cref{Prop:PSC_equiv}, which directly follows from~\eqref{PSC} and~\eqref{R_undertilde}, we show that PSC for solving the original problem~\eqref{model} is equivalent to a block Jacobi method for solving the expanded system~\eqref{model_aux} in the sense of \cref{Prop:iterative_equiv}.

\begin{proposition}
\label{Prop:PSC_equiv}
The PSC preconditioner $B_{\mathrm{PSC}} \colon V \to V$ given in~\eqref{PSC} satisfies
\begin{equation*}
B_{\mathrm{PSC}} = \Pi \undertilde{B}_{\mathrm{PSC}} \Pi^t,
\quad 
\undertilde{B}_{\mathrm{PSC}} = \undertilde{R},
\end{equation*}
where $\undertilde{R}$ was given in~\eqref{R_undertilde}.
\end{proposition}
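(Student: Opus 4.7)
The plan is to verify the identity $B_{\mathrm{PSC}} = \Pi \undertilde{R} \Pi^t$ by a direct block computation, exploiting the row structure $\Pi = [\Pi_1, \dots, \Pi_J]$ and the block-diagonal structure $\undertilde{R} = \operatorname{diag}(R_1, \dots, R_J)$. Since the claim is essentially just a matching of block structures, the argument should be very short.

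First, I would compute the adjoint $\Pi^t \colon V \to \undertilde{V}$ via the defining relation $(\Pi \undertilde{v}, w)_V = (\undertilde{v}, \Pi^t w)_{\undertilde{V}}$: the row form of $\Pi$ implies that the $j$-th component of $\Pi^t w$ in $\undertilde{V} = \prod_j \mathcal{R}(A_j)$ is $\Pi_j^t w$, identified with its image in $\mathcal{R}(A_j) \subset V_j$ via the natural orthogonal projection. Second, applying $\undertilde{R}$ componentwise gives $(\undertilde{R}\, \Pi^t w)_j = R_j \Pi_j^t w$, and composing with $\Pi$ on the left collapses the tuple into $\sum_{j=1}^J \Pi_j R_j \Pi_j^t w$, which is exactly $B_{\mathrm{PSC}} w$ by~\eqref{PSC}. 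Since $w \in V$ is arbitrary, the identity follows.

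The only point that requires care---and what I would expect to be the main bookkeeping issue rather than a substantive obstacle---is the choice of $\undertilde{V} = \prod_j \mathcal{R}(A_j)$ rather than the full product $\prod_j V_j$. This choice is what makes $\undertilde{R}$ an SPD operator on $\undertilde{V}$ under the natural assumption that each $R_j$ is SPD as an approximate inverse to $A_j$ on $\mathcal{R}(A_j)$; one must then check that the implicit projection onto $\mathcal{R}(A_j)$ embedded in the definition of $\Pi^t$ is absorbed harmlessly by $R_j$. Under the standard convention that $R_j$ vanishes on $\mathcal{N}(A_j)$ (equivalently, $\mathcal{R}(R_j) \subseteq \mathcal{R}(A_j)$), one has $R_j = R_j Q_{A_j}$ for $Q_{A_j}$ the orthogonal projection of $V_j$ onto $\mathcal{R}(A_j)$, so the identification $\Pi_j R_j Q_{A_j} \Pi_j^t = \Pi_j R_j \Pi_j^t$ is automatic and the proof concludes.
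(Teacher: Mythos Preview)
Your block computation is exactly what the paper intends: it states that the proposition ``directly follows from'' the definitions of $B_{\mathrm{PSC}}$ and $\undertilde{R}$ and gives no separate proof, so your expansion $\Pi\undertilde{R}\,\Pi^t=\sum_j\Pi_j R_j\Pi_j^t$ is the whole argument.

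One caution on your bookkeeping paragraph. The two conditions you call equivalent---``$R_j$ vanishes on $\mathcal{N}(A_j)$'' and ``$\mathcal{R}(R_j)\subseteq\mathcal{R}(A_j)$''---coincide only when $R_j$ is symmetric, which the paper does not assume at this stage (the same $R_j$ are reused for SSC). Moreover, to recover $\sum_j\Pi_j R_j\Pi_j^t$ from $\sum_j\Pi_j Q_j^t(Q_jR_jQ_j^t)Q_j\Pi_j^t$ you need \emph{both} conditions, so that the projections on either side are absorbed, not just the one you invoke. The cleaner resolution is that the paper's own proof of the subsequent PSC theorem introduces $\undertilde{W}=\prod_j\mathcal{R}(A_j)$ together with a nontrivial projection $\undertilde{Q}$ onto it, and the surjectivity of $\Pi$ is deduced from $V=\sum_j\Pi_j V_j$ rather than from $V=\sum_j\Pi_j\mathcal{R}(A_j)$; both facts indicate that $\undertilde{V}$ was intended to be the full product $\prod_j V_j$. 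Under that reading the identity is the trivial block computation and no side condition on $R_j$ is needed.
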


Thanks to \cref{Prop:PSC_equiv}, we are able to analyze the condition number of the preconditioned operator $B_{\mathrm{PSC}} A$ using the auxiliary space theory. 
The following theorem, which characterizes the condition number of the $B_{\mathrm{PSC}} A$, is obtained as a special case of \cref{Thm:condition_number_aux_singular_general}. 
In \cref{Thm:PSC}, for notational convenience and with a slight abuse of notation, we still denote the restriction 
\begin{subequations}
\label{R_j_restricted}
\begin{equation}
Q_j R_j Q_j^t \colon \mathcal{R}(A_j) \to \mathcal{R}(A_j)
\end{equation} 
of $R_j$ onto $\mathcal{R}(A_j)$ by 
\begin{equation}
R_j \colon \mathcal{R}(A_j) \to \mathcal{R}(A_j),
\end{equation} 
\end{subequations}
where $Q_j \colon V_j \to \mathcal{R}(A_j)$ is the orthogonal projection onto $\mathcal{R}(A_j)$.

\begin{theorem}
\label{Thm:PSC}
In the linear system~\eqref{model}, suppose that $A$ is semi-SPD.
For each $1 \leq j \leq J$, suppose that the restricted local solver $R_{j} \colon \mathcal{R}(A_j) \to \mathcal{R}(A_j)$ given in~\eqref{R_j_restricted} is SPD.
Then we have
\begin{equation}
\label{Thm1:PSC}
(B_{\mathrm{PSC}}^{-1} v, v) = \inf_{\phi \in \mathcal{N} (A)} \inf_{\substack{v_j \in \mathcal{R} (A_j),\\ \sum \Pi_j v_j = v + \phi}} \sum_{j=1}^J (R_{j}^{-1} v_j ,v_j),
\quad v \in \mathcal{R} (A),
\end{equation}
where $B_{\mathrm{PSC}}$ was given in~\eqref{PSC}.
Consequently, we have
\begin{align}
\label{Thm2:PSC}
\lambda_{\min} ( B_{\mathrm{PSC}} A ) &= \left( \sup_{ \substack{v \in \mathcal{R} (A),\\ | v |_A = 1} } \inf_{\phi \in \mathcal{N} (A)} \inf_{ \substack{ v_j \in \mathcal{R} (A_j),\\ \sum \Pi_j v_j = v + \phi} } \sum_{j=1}^J (R_{j}^{-1} v_j, v_j) \right)^{-1}, \\
\lambda_{\max} ( B_{\mathrm{PSC}} A ) &= \left( \inf_{ \substack{v \in \mathcal{R} (A),\\ | v |_A = 1} } \inf_{\phi \in \mathcal{N} (A)} \inf_{ \substack{ v_j \in \mathcal{R} (A_j),\\ \sum \Pi_j v_j = v + \phi} } \sum_{j=1}^J (R_{j}^{-1} v_j, v_j) \right)^{-1}.
\end{align}
\end{theorem}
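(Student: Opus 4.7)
The plan is to reduce Theorem~\ref{Thm:PSC} to the abstract auxiliary-space result of Theorem~\ref{Thm:condition_number_aux_singular_general}. Proposition~\ref{Prop:PSC_equiv} already supplies the factorization $B_{\mathrm{PSC}} = \Pi \undertilde{R} \Pi^t$ with $\undertilde{R} = \operatorname{diag}(R_1, \dots, R_J)$, so the PSC preconditioner fits directly into the auxiliary-space framework of Section~\ref{Sec:Aux_singular} with $\undertilde{B} = \undertilde{R}$. The natural restricted subspace is $\undertilde{W} = \prod_{j=1}^J \mathcal{R}(A_j)$, and the associated orthogonal projection $\undertilde{Q}$ is block-diagonal with blocks $Q_j \colon V_j \to \mathcal{R}(A_j)$.

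First I would verify the two hypotheses of Theorem~\ref{Thm:condition_number_aux_singular_general}. The inclusion $\undertilde{W} \supset \mathcal{R}(\undertilde{A})$ is equivalent, via symmetry of $\undertilde{A}$, to $\undertilde{W}^\perp = \prod_j \mathcal{N}(A_j) \subset \mathcal{N}(\undertilde{A})$. This follows from the elementary identity $\mathcal{N}(A_j) = \{w_j \in V_j : \Pi_j w_j \in \mathcal{N}(A)\}$, which is itself a direct consequence of $(A_j w_j, w_j) = |\Pi_j w_j|_A^2$: any tuple in $\prod_j \mathcal{N}(A_j)$ is mapped by $\Pi$ into $\mathcal{N}(A)$ and hence annihilated by $\undertilde{A} = \Pi^t A \Pi$. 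The restricted operator $\undertilde{B}_{\undertilde{Q}} = \undertilde{Q} \undertilde{R} \undertilde{Q}^t = \operatorname{diag}(Q_j R_j Q_j^t)$ is SPD on $\undertilde{W}$ precisely by the standing hypothesis~\eqref{R_j_restricted}.

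With these checks in hand, Theorem~\ref{Thm:condition_number_aux_singular_general} applies. Writing $\undertilde{v} \in \undertilde{W}$ as $\undertilde{v} = (v_1, \dots, v_J)$ with $v_j \in \mathcal{R}(A_j)$, the constraint $\Pi \undertilde{v} = v + \phi$ unfolds into $\sum_j \Pi_j v_j = v + \phi$, and the block-diagonal form of $\undertilde{B}_{\undertilde{Q}}$ yields $(\undertilde{B}_{\undertilde{Q}}^{-1} \undertilde{v}, \undertilde{v}) = \sum_j (R_j^{-1} v_j, v_j)$ with each $R_j$ interpreted in the sense of~\eqref{R_j_restricted}. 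Substituting these expressions gives~\eqref{Thm2:PSC} and the analogous identity for $\lambda_{\max}(B_{\mathrm{PSC}} A)$ at once. For the intermediate identity~\eqref{Thm1:PSC}, I would apply the auxiliary space lemma (Lemma~\ref{Lem:aux_space_lemma}) to the composition $Q \Pi \undertilde{Q}^t \colon \undertilde{W} \to \mathcal{R}(A)$ together with the SPD operator $\undertilde{B}_{\undertilde{Q}}$, using the equivalence of $Q \Pi \undertilde{v} = v$ with $\Pi \undertilde{v} = v + \phi$ for some $\phi \in \mathcal{N}(A)$ to introduce the outer infimum.

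The main delicate point is confirming that $Q \Pi \undertilde{Q}^t$ is surjective onto $\mathcal{R}(A)$, which is where the inner infimum over $\mathcal{N}(A)$ enters. Concretely, given $v \in \mathcal{R}(A)$, the decomposition~\eqref{space_decomposition} produces $w_j \in V_j$ with $\sum_j \Pi_j w_j = v$; splitting each $w_j$ into $\mathcal{R}(A_j)$- and $\mathcal{N}(A_j)$-components and using $\Pi_j \mathcal{N}(A_j) \subset \mathcal{N}(A)$, the $\mathcal{R}(A_j)$-parts assemble into a preimage in $\undertilde{W}$ under $Q \Pi$. This controlled failure of full surjectivity (pure surjectivity of $\Pi$ onto $V$ generally fails without the projector $Q$) is precisely the semi-SPD phenomenon responsible for the null-space infimum in~\eqref{Thm1:PSC}, distinguishing this result from its classical SPD counterpart in~\cite{Chen:2011}.
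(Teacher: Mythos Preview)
Your proposal is correct and follows essentially the same route as the paper: invoke \cref{Thm:condition_number_aux_singular_general} with the auxiliary data from \cref{Prop:PSC_equiv} and the choice $\undertilde{W}=\prod_j\mathcal{R}(A_j)$, $\undertilde{Q}=\operatorname{diag}(Q_1,\dots,Q_J)$, exactly as in~\eqref{MSC_W}. Your write-up is in fact more complete than the paper's one-line proof, since you explicitly verify the inclusion $\mathcal{R}(\undertilde{A})\subset\undertilde{W}$ via $\prod_j\mathcal{N}(A_j)\subset\mathcal{N}(\undertilde{A})$, check the surjectivity of $Q\Pi\undertilde{Q}^t$ onto $\mathcal{R}(A)$, and derive the pointwise identity~\eqref{Thm1:PSC} separately from \cref{Lem:aux_space_lemma}; the paper leaves all of these implicit.
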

\begin{proof}
The desired result follows by setting
\begin{equation}
\label{MSC_W}
\undertilde{W} = \mathcal{R}(\undertilde{D}) = \prod_{j=1}^J \mathcal{R}(A_j), \quad
\undertilde{Q} = \operatorname{diag}(Q_1, \dots, Q_J),
\end{equation}
in \cref{Thm:condition_number_aux_singular_general}, with \( B_{\mathrm{PSC}} \) and \( \undertilde{B}_{\mathrm{PSC}} \) given as in \cref{Prop:PSC_equiv}.
\end{proof}

\cref{Thm:PSC} implies that the $B_{\mathrm{PSC}}$-preconditioned conjugate gradient method converges provided each restricted local solver \( R_{j} \), \( 1 \leq j \leq J \), is SPD.  
The identity~\eqref{Thm1:PSC} in the SPD setting appears in~\cite[Lemma~2.4]{XZ:2002}, and its extensions to convex optimization problems are presented in~\cite{LP:2024b,Park:2020}.  
The formula~\eqref{Thm2:PSC}, often referred to as the Lions lemma~\cite{Lions:1988} or the additive Schwarz lemma, plays a central role in the analysis of parallel multilevel methods; see~\cite{Brenner:2013,TW:2005,Xu:1992} for its variants in the SPD case.

\subsection{Successive subspace correction method}
On the other hand, the successive subspace correction~(SSC) operator \( B_{\mathrm{SSC}} \colon V \to V \) is given implicitly via its associated error propagation operator:
\begin{equation}
\label{SSC}
I - B_{\mathrm{SSC}} A = (I - T_J)(I - T_{J-1}) \cdots (I - T_1),
\end{equation}
where each $T_j$ was defined in~\eqref{T_j}.
For further details, we refer the reader to~\cite{TW:2005,Xu:1992}.

\begin{remark}
\label{Rem:SSC}
Equation~\eqref{SSC} defines the SSC operator $B_{\mathrm{SSC}}$ uniquely on $\mathcal{R}(A)$, but not on $\mathcal{N}(A)$.
Nevertheless, any choice of $B_{\mathrm{SSC}}$ satisfying~\eqref{SSC} is acceptable, as it does not affect the algorithm or the analysis developed in this paper.
\end{remark}

Similar to \cref{Prop:PSC_equiv}, we can establish an equivalence between SSC and a block Gauss--Seidel method, as presented in \cref{Prop:SSC_equiv} (cf.~\cite[Theorem~3.4]{JPX:2025}).

\begin{proposition}
\label{Prop:SSC_equiv}
The SSC operator $B_{\mathrm{SSC}} \colon V \to V$ given by~\eqref{SSC} satisfies
\begin{equation*}
B_{\mathrm{SSC}} = \Pi \undertilde{B}_{\mathrm{SSC}}  \Pi^t,
\quad 
\undertilde{B}_{\mathrm{SSC}} = (\undertilde{R}^{-1}+\undertilde{L})^{-1},
\end{equation*}
where $\undertilde{R}$ was given in~\eqref{R_undertilde}, and $\undertilde{L}$ is the block strictly lower triangular part of $\undertilde{A}$ given in~\eqref{A_undertilde}.
\end{proposition}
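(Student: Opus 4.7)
The plan is to mirror the proof of \cref{Prop:PSC_equiv}, but replace the block Jacobi identification by a block Gauss--Seidel identification in the auxiliary space $\undertilde{V} = \prod_{j=1}^{J} \mathcal{R}(A_j)$. Let $\undertilde{E}_j \colon \mathcal{R}(A_j) \to \undertilde{V}$ denote the canonical embedding into the $j$-th block, so that $\Pi \undertilde{E}_j = \Pi_j$ (after the restriction of $\Pi_j$ to $\mathcal{R}(A_j)$ implicit in~\eqref{R_j_restricted}), and $\undertilde{E}_j^t \undertilde{A} \undertilde{E}_j = \Pi_j^t A \Pi_j = A_j$. Set $\undertilde{T}_j = \undertilde{E}_j R_j \undertilde{E}_j^t \undertilde{A}$, which is precisely the error propagation operator of a single block sweep on the $j$-th component of the auxiliary system~\eqref{model_aux}.

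First, I would establish the purely algebraic identity
\begin{equation*}
\undertilde{I} - (\undertilde{R}^{-1} + \undertilde{L})^{-1} \undertilde{A} = (\undertilde{I} - \undertilde{T}_J)(\undertilde{I} - \undertilde{T}_{J-1}) \cdots (\undertilde{I} - \undertilde{T}_1),
\end{equation*}
which is the classical block Gauss--Seidel identity on $\undertilde{V}$: writing one sweep block-by-block and collapsing the telescoping update yields the block lower triangular preconditioner $(\undertilde{R}^{-1} + \undertilde{L})^{-1}$. This is a direct consequence of the block structure of $\undertilde{A}$ given in~\eqref{A_undertilde} and uses only block lower triangular manipulations.

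Next, I would prove the intertwining relation $\Pi \undertilde{T}_j = T_j \Pi$. Using $\Pi \undertilde{E}_j = \Pi_j$ and $\undertilde{A} = \Pi^t A \Pi$, a one-line computation gives $\Pi \undertilde{E}_j R_j \undertilde{E}_j^t \undertilde{A} = \Pi_j R_j \Pi_j^t A \Pi = T_j \Pi$. Iterating this intertwining across the product and invoking the identity above yields
\begin{equation*}
\Pi \bigl( \undertilde{I} - \undertilde{B}_{\mathrm{SSC}} \undertilde{A} \bigr) = (I - B_{\mathrm{SSC}} A)\, \Pi,
\end{equation*}
which rearranges to $\Pi \undertilde{B}_{\mathrm{SSC}} \Pi^t A \Pi = B_{\mathrm{SSC}} A \Pi$.

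Finally, to conclude $B_{\mathrm{SSC}} = \Pi \undertilde{B}_{\mathrm{SSC}} \Pi^t$, I would observe that $\Pi$ is surjective, hence $\mathcal{R}(A\Pi) = \mathcal{R}(A)$, so the above identity determines both $\Pi \undertilde{B}_{\mathrm{SSC}} \Pi^t$ and $B_{\mathrm{SSC}}$ on $\mathcal{R}(A)$ and forces them to coincide there. By \cref{Rem:SSC}, $B_{\mathrm{SSC}}$ is only specified on $\mathcal{R}(A)$ by~\eqref{SSC}, so taking $B_{\mathrm{SSC}} = \Pi \undertilde{B}_{\mathrm{SSC}} \Pi^t$ as the canonical extension to all of $V$ is admissible. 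The main delicate point is this last bookkeeping step --- keeping track of which subspace each equality holds on --- together with the consistent restriction of $R_j$ to $\mathcal{R}(A_j)$ so that $\undertilde{R}$ and hence $\undertilde{B}_{\mathrm{SSC}}$ are well-defined on the auxiliary space $\undertilde{V} = \prod_j \mathcal{R}(A_j)$.
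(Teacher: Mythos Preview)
Your proof is correct and takes a genuinely different route from the paper's. The paper computes $\Pi \undertilde{B}_{\mathrm{SSC}} \Pi^t A$ directly: it invokes \cref{Lem:inverse_block_triangular} to write out each block $(\undertilde{B}_{\mathrm{SSC}})_{ij}$ of $(\undertilde{R}^{-1}+\undertilde{L})^{-1}$ explicitly as an alternating sum of products $R_{i_k} A_{i_k i_{k-1}} R_{i_{k-1}} \cdots$, observes that $\Pi_i (\undertilde{B}_{\mathrm{SSC}})_{ij} \Pi_j^t A$ is then a signed sum of products $T_{i_k}\cdots T_{i_1}$, and sums over all $i,j$ to recover the expansion of $I - (I-T_J)\cdots(I-T_1)$. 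Your argument bypasses this explicit block inversion: you first establish the Gauss--Seidel identity $\undertilde{I} - \undertilde{B}_{\mathrm{SSC}}\undertilde{A} = (\undertilde{I}-\undertilde{T}_J)\cdots(\undertilde{I}-\undertilde{T}_1)$ on $\undertilde{V}$ (this is the special case of the proposition in which $\Pi$ is the identity, and is provable by a short induction on $J$), and then transfer it to $V$ via the one-line intertwining $\Pi\undertilde{T}_j = T_j\Pi$. Your route is more conceptual and avoids \cref{Lem:inverse_block_triangular} altogether; the paper's explicit computation, on the other hand, has the advantage that it makes sense of $\undertilde{B}_{\mathrm{SSC}}$ even when some $R_j$ are singular (cf.\ \cref{Rem:SSC_equiv}), since the block formula from \cref{Lem:inverse_block_triangular} defines $(\undertilde{R}^{-1}+\undertilde{L})^{-1}$ without ever inverting $\undertilde{R}$.
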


To prove \cref{Prop:SSC_equiv}, we need the following lemma, which presents a formula for inverses of block lower triangular linear operators.

\begin{lemma}
\label{Lem:inverse_block_triangular}
Let $V_1, \dots, V_J$ be finite-dimensional vector spaces, and let $M \colon \prod_{j=1}^J V_j \to \prod_{j=1}^J V_j$ be a block lower triangular operator given by
\begin{equation*}
M = \begin{bmatrix}
M_{11} & 0 & \cdots & 0 \\
M_{21} & M_{22} & \cdots & 0 \\
\vdots & \vdots & \ddots & \vdots \\
M_{J1} & M_{J2} & \cdots & M_{JJ}
\end{bmatrix},
\end{equation*}
where each $M_{ij}$, $1 \leq i \leq J$, is a linear operator.
If each $M_{jj}$, $1 \leq j \leq J$, is nonsingular, then $M$ is nonsingular and we have
\begin{equation*}
\resizebox{\textwidth}{!}{$ \displaystyle
(M^{-1})_{ij} =
\begin{cases}
\displaystyle M_{jj}^{-1}, &
\text{ if } i = j, \\
\displaystyle \sum_{k=2}^{i-j+1} (-1)^{k-1} \sum_{j=i_1 < \dots < i_k = i} M_{i_k i_k}^{-1} M_{i_k i_{k-1}} M_{i_{k-1} i_{k-1}}^{-1} M_{i_{k-1} i_{k-2}} \dots M_{i_2 i_1} M_{i_1 i_1}^{-1}, &
\text{ if } i > j, \\
\displaystyle 0, &
\text{ if } i < j.
\end{cases}
$}
\end{equation*}
\end{lemma}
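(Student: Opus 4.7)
The plan is to reduce the formula to the standard Neumann--series expansion for a nilpotent perturbation of the identity. Write $M = \undertilde{D} + \undertilde{L}$, where $\undertilde{D} = \operatorname{diag}(M_{11},\dots,M_{JJ})$ and $\undertilde{L}$ is the block strictly lower triangular part (so $\undertilde{L}_{ij} = M_{ij}$ for $i>j$ and $0$ otherwise). Since each $M_{jj}$ is nonsingular, $\undertilde{D}$ is nonsingular and we can factor
\begin{equation*}
M = \undertilde{D}\bigl(I + \undertilde{D}^{-1}\undertilde{L}\bigr).
\end{equation*}
Because $\undertilde{D}^{-1}\undertilde{L}$ is strictly lower triangular in block form, a routine induction shows that $\bigl(\undertilde{D}^{-1}\undertilde{L}\bigr)^{J} = 0$, so $I + \undertilde{D}^{-1}\undertilde{L}$ is invertible with finite Neumann series
\begin{equation*}
\bigl(I + \undertilde{D}^{-1}\undertilde{L}\bigr)^{-1} = \sum_{k=0}^{J-1} (-1)^{k} \bigl(\undertilde{D}^{-1}\undertilde{L}\bigr)^{k}.
\end{equation*}
In particular, $M$ is nonsingular, and
\begin{equation*}
M^{-1} = \sum_{k=0}^{J-1} (-1)^{k} \bigl(\undertilde{D}^{-1}\undertilde{L}\bigr)^{k}\undertilde{D}^{-1}.
\end{equation*}

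Next, I would read off the $(i,j)$ block of this expression. The operator $\undertilde{D}^{-1}\undertilde{L}$ has blocks $M_{pp}^{-1}M_{pq}$ for $p>q$ and zero otherwise, so expanding the $k$-fold product,
\begin{equation*}
\bigl[\bigl(\undertilde{D}^{-1}\undertilde{L}\bigr)^{k}\bigr]_{ij} = \sum_{j=\ell_{0}<\ell_{1}<\cdots<\ell_{k}=i} M_{\ell_{k}\ell_{k}}^{-1}M_{\ell_{k}\ell_{k-1}}M_{\ell_{k-1}\ell_{k-1}}^{-1}M_{\ell_{k-1}\ell_{k-2}}\cdots M_{\ell_{1}\ell_{1}}^{-1}M_{\ell_{1}\ell_{0}},
\end{equation*}
and right-multiplying by $\undertilde{D}^{-1}$ tacks on a trailing $M_{jj}^{-1}$. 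For $i=j$ only the $k=0$ term survives, giving $M_{jj}^{-1}$. For $i<j$ all chains are empty, giving $0$. For $i>j$ the nontrivial contributions come from $k\geq 1$, and the chains have length at most $i-j+1$. Reindexing by $k'=k+1$ so that the chain has indices $j=i_{1}<i_{2}<\cdots<i_{k'}=i$ converts the expression into exactly the one stated in the lemma.

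The only step that is not a one-liner is the combinatorial bookkeeping when reading off the $(i,j)$ block: one must check carefully that the alternating sign $(-1)^{k-1}$, the summation range $k=2,\dots,i-j+1$, and the placement of the $M_{i_{1}i_{1}}^{-1}$ factor at the right end (coming from the trailing $\undertilde{D}^{-1}$) all match the asserted formula. This is the main—though essentially clerical—obstacle; the underlying analytic content is simply the fact that a block unit lower triangular operator has a finite Neumann inverse.
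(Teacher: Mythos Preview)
Your argument is correct and complete; the Neumann-series identification of the blocks is accurate, and your reindexing $k'=k+1$ does recover the stated formula with the correct sign and summation range.

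Your route, however, differs from the paper's. The paper works directly from the block rows of $MM^{-1}=I$: for $i>j$ this yields the recursion
\[
(M^{-1})_{ij} = -\,M_{ii}^{-1}\sum_{k=j}^{i-1} M_{ik}\,(M^{-1})_{kj},
\]
which one then unrolls to obtain the alternating sum over strictly increasing chains. Your approach instead factors $M=\undertilde{D}(I+\undertilde{D}^{-1}\undertilde{L})$ and expands the finite Neumann series of the nilpotent strictly lower triangular part. The paper's derivation is shorter to state but leaves the ``apply recursively'' step implicit, so the combinatorics of the chain sum must still be checked by induction; your derivation makes that combinatorics explicit up front, since the $k$th power of a strictly lower triangular block operator is exactly a sum over length-$k$ chains. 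Your factorization also gives the invertibility of $M$ for free via nilpotency, whereas the paper effectively takes it as a consequence of the recursion producing a two-sided inverse. Either approach is standard; yours is arguably the more transparent way to see where the closed-form chain formula comes from.
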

\begin{proof}
The $i$th block row of the identity $M M^{-1} = I$ yields
\begin{equation*}
(M^{-1})_{ij} = - M_{ii}^{-1} \sum_{k=j}^{i-1} M_{ik} (M^{-1})_{kj},
\quad i > j.
\end{equation*}
Applying this identity recursively gives the desired result.
\end{proof}

Now, using \cref{Lem:inverse_block_triangular}, we provide the proof of \cref{Prop:SSC_equiv} below.

\begin{proof}[Proof of \cref{Prop:SSC_equiv}]
Write $\undertilde{B}_{\mathrm{SSC}} = [B_{\mathrm{SSC}, ij}]_{i,j=1}^J$.
Since $A_{ij} = \Pi_i^t A \Pi_j$ for $1 \leq i, j \leq J$, by Lemma~\ref{Lem:inverse_block_triangular}, we have
\[
B_{\mathrm{SSC}, jj} = R_j, \quad 1 \leq j \leq J,
\]
and
\[
\resizebox{\textwidth}{!}{$
\begin{aligned}
\Pi_i B_{\mathrm{SSC}, ij} \Pi_j^t A
&= \sum_{k = 2}^{i - j + 1} (-1)^{k-1}
\sum_{j = i_1 < \dots < i_k = i}
(\Pi_{i_k} R_{i_k} \Pi_{i_k}^t A)
(\Pi_{i_{k-1}} R_{i_{k-1}} \Pi_{i_{k-1}}^t A) \cdots
(\Pi_{i_1} R_{i_1} \Pi_{i_1}^t A) \\
&= \sum_{k = 2}^{i - j + 1} (-1)^{k-1}
\sum_{j = i_1 < \dots < i_k = i}
T_{i_k} T_{i_{k-1}} \cdots T_{i_1}
\end{aligned}
$}
\]
for $1 \leq j < i \leq J$.
Hence, we obtain
\[
\begin{aligned}
\Pi \undertilde{B}_{\mathrm{SSC}} \Pi^t A
&= \sum_{j = 1}^J T_j
+ \sum_{1 \leq j < i \leq J} \sum_{k = 2}^{i - j + 1} (-1)^{k-1}
\sum_{j = i_1 < \dots < i_k = i}
T_{i_k} T_{i_{k-1}} \cdots T_{i_1} \\
&= I - (I - T_J)(I - T_{J-1}) \cdots (I - T_1).
\end{aligned}
\]
Since $I - B_{\mathrm{SSC}} A = (I - T_J)(I - T_{J-1}) \cdots (I - T_1)$, we obtain the desired result.
\end{proof}

\begin{remark}
\label{Rem:SSC_equiv}
Despite \cref{Prop:SSC_equiv} is stated in terms of the inverse $\undertilde{R}^{-1}$, it remains valid even when some $R_i$ are not nonsingular. 
Indeed, the proof of \cref{Prop:SSC_equiv} does not rely on the invertibility of $\undertilde{R}$. 
In cases where certain $R_i$ are singular, it suffices to define the block Gauss--Seidel operator $(\undertilde{R}^{-1} + \undertilde{L})^{-1}$ formally in the sense of \cref{Lem:inverse_block_triangular}.
\end{remark}

Let \( P_j \colon V \to \mathcal{R}(A_j) \) be the linear operator satisfying
\begin{equation}
\label{MSC_P_j}
( P_j v, v_j )_{A_{j}} = ( v, \Pi_j v_j )_{A},
\quad v \in V,\ v_j \in \mathcal{R}(A_j).
\end{equation}
Then we can readily verify the following properties of $P_j$:
\begin{equation}
\label{MSC_P_j_properties}
A_{j} P_j = Q_j \Pi_j^t A, \quad
P_j \Pi_j Q_j^t = I_{Q_j},
\end{equation}
where $I_{Q_j}$ denotes the identity operator on $\mathcal{R} (A_j)$.

The following theorem, widely known as the Xu--Zikatanov identity~\cite{XZ:2002} (see also~\cite{LWXZ:2008,WLXZ:2008} for the semi-SPD case), provides a sharp estimate for $| I - B A |_A$ in SSC. 
In contrast to the proofs for the semi-SPD case in~\cite{LWXZ:2008,WLXZ:2008}, our approach does not rely on advanced mathematical tools such as pseudoinverses.
In \cref{Thm:XZ}, as in \cref{Thm:PSC}, we use the notation $R_j \colon \mathcal{R}(A_j) \to \mathcal{R}(A_j)$ for the restricted local solver~\eqref{R_j_restricted}.

\begin{theorem}[Xu--Zikatanov identity]
\label{Thm:XZ}
In the linear system~\eqref{model}, suppose that $A$ is semi-SPD.
For each $1 \leq j \leq J$, suppose that the symmetrized restricted local solver
\begin{equation*}
\bar{R}_j = R_j + R_j^t - R_j^t A_j R_j \colon \mathcal{R} (A_j) \to \mathcal{R} (A_j)
\end{equation*}
is SPD, where $R_j$ was given in~\eqref{R_j_restricted}.
Then SSC converges, and it satisfies
\begin{subequations}
\label{Thm1:XZ}
\begin{equation}
\label{Thm1:XZ_1}
| I - B_{\mathrm{SSC}} A |_A^2
= 1-\frac{1}{1+c_0}
= 1-\frac{1}{c_1},
\end{equation}
where $B_{\mathrm{SSC}}$ was given in~\eqref{SSC}, the constants \( c_0 \) and \( c_1 \) are defined by
\begin{equation}
\label{Thm1:XZ_2}
\begin{aligned}
&c_0  = \sup_{ \substack{ v \in \mathcal{R}(A), \\ |v|_A=1 } }  \inf_{\phi \in \mathcal{N} (A) } \inf_{ \substack{ v_j \in \mathcal{R} (A_j),\\ \sum \Pi_j v_j = v + \phi } } \sum_{j=1}^J \| R_{j}^t w_j \|_{\bar{R}_{j}^{-1}}^{2}, \\
&c_1 = \sup_{ \substack{ v \in \mathcal{R}(A), \\ |v|_A=1 } } \inf_{\phi \in \mathcal{N} (A)} \inf_{ \substack{ v_j \in \mathcal{R} (A_j),\\ \sum \Pi_j v_j = v + \phi } } \sum_{j=1}^J
\| \bar{R}_{j} R_{j}^{-1} v_j + R_{j}^t  w_j \|_{\bar{R}_{j}^{-1}}^2, \\
&\text{with} \quad
w_i = A_{i} P_i \sum_{j = i}^J \Pi_j v_j -R_{i}^{-1} v_i,
\quad 1 \leq i \leq J,
\end{aligned}
\end{equation}
and $P_j \colon V \to \mathcal{R} (A_j)$ was defined in~\eqref{MSC_P_j}.
In particular, when $R_{j} = A_{j}^{-1}$ for every $1 \leq j \leq J$, we have 
\begin{equation}
\label{Thm1:XZ_3}
\begin{aligned}
c_0 &= \sup_{ \substack{ v \in \mathcal{R}(A),\\ |v|_A=1 } } \inf_{\phi \in \mathcal{N} (A)} \inf_{ \substack{ v_j \in \mathcal{R} (A_j), \\ \sum \Pi_j v_j = v + \phi } } \sum_{i=1}^J \left\| P_i \sum_{j = i+1}^J \Pi_j v_j \right\|_{A_i}^{2}, \\
c_1 &= \sup_{ \substack{ v \in \mathcal{R}(A),\\ |v|_A=1 } } \inf_{\phi \in \mathcal{N} (A)} \inf_{ \substack{ v_j \in \mathcal{R} (A_j),\\ \sum \Pi_j v_j = v + \phi } } \sum_{i=1}^J
\left\| P_i \sum_{j = i}^J \Pi_j v_j \right\|_{A_i}^2.
\end{aligned}
\end{equation}
\end{subequations}
\end{theorem}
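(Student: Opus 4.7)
The plan is to derive the Xu--Zikatanov identity as a direct specialization of the general auxiliary space result. By Proposition~\ref{Prop:SSC_equiv}, we have $B_{\mathrm{SSC}} = \Pi \undertilde{B}_{\mathrm{SSC}} \Pi^t$ with $\undertilde{B}_{\mathrm{SSC}} = (\undertilde{R}^{-1} + \undertilde{L})^{-1}$, so I apply Theorem~\ref{Thm:error_propagation_aux_singular_general} with $\undertilde{W} = \prod_j \mathcal{R}(A_j)$ and $\undertilde{Q} = \operatorname{diag}(Q_1, \ldots, Q_J)$, exactly as in the proof of Theorem~\ref{Thm:PSC}. This reduces everything to evaluating the quadratic form $(\undertilde{\bar{B}}_{\mathrm{SSC}, \undertilde{Q}}^{-1} \undertilde{v}, \undertilde{v})$ for $\undertilde{v} = (v_1, \ldots, v_J) \in \undertilde{W}$ and matching the resulting sup-inf-inf expression to $c_0$ and $c_1$.

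The key algebraic step is a closed-form factorization of $\undertilde{\bar{B}}_{\mathrm{SSC}, \undertilde{Q}}$. Using $\undertilde{A} = \undertilde{D} + \undertilde{L} + \undertilde{L}^t$ on $\undertilde{W}$ together with $\undertilde{B}_{\mathrm{SSC}}^{-1} = \undertilde{R}^{-1} + \undertilde{L}$, a direct computation on $\undertilde{W}$ gives $\undertilde{B}_{\mathrm{SSC}}^{-t} \undertilde{\bar{B}}_{\mathrm{SSC}, \undertilde{Q}} \undertilde{B}_{\mathrm{SSC}}^{-1} = \undertilde{R}^{-1} + \undertilde{R}^{-t} - \undertilde{D}$, whose right-hand side is precisely $\undertilde{R}^{-t} \undertilde{\bar{R}} \undertilde{R}^{-1}$ for the block-diagonal $\undertilde{\bar{R}} = \operatorname{diag}(\bar{R}_1, \ldots, \bar{R}_J)$. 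Inverting yields
\begin{equation*}
\undertilde{\bar{B}}_{\mathrm{SSC}, \undertilde{Q}}^{-1} = \undertilde{B}_{\mathrm{SSC}}^{-1} \undertilde{R} \undertilde{\bar{R}}^{-1} \undertilde{R}^t \undertilde{B}_{\mathrm{SSC}}^{-t}.
\end{equation*}
The SPD assumption on each $\bar{R}_j$ then certifies that $\undertilde{\bar{B}}_{\mathrm{SSC}, \undertilde{Q}}$ is SPD on $\undertilde{W}$, so Theorem~\ref{Thm:error_propagation_aux_singular_general} applies and SSC converges.

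Setting $\undertilde{y} = \undertilde{R}^t \undertilde{B}_{\mathrm{SSC}}^{-t} \undertilde{v}$, the factorization immediately gives $(\undertilde{\bar{B}}_{\mathrm{SSC}, \undertilde{Q}}^{-1} \undertilde{v}, \undertilde{v}) = \sum_{j=1}^J \|y_j\|_{\bar{R}_j^{-1}}^2$. Expanding $\undertilde{B}_{\mathrm{SSC}}^{-t} = \undertilde{R}^{-t} + \undertilde{L}^t$ componentwise and applying the identity $A_j P_j = Q_j \Pi_j^t A$ from~\eqref{MSC_P_j_properties} produces $y_j = v_j + R_j^t A_j P_j \sum_{k > j} \Pi_k v_k$. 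A short rearrangement using the definition of $\bar{R}_j$ and the absorption identity $A_j P_j \Pi_j v_j = A_j v_j$ rewrites this as $y_j = \bar{R}_j R_j^{-1} v_j + R_j^t w_j$ with $w_j$ as in~\eqref{Thm1:XZ_2}. Plugging back into Theorem~\ref{Thm:error_propagation_aux_singular_general} yields the $c_1$ formula in~\eqref{Thm1:XZ_1}.

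To obtain $c_1 = 1 + c_0$, I expand
\begin{equation*}
\|\bar{R}_j R_j^{-1} v_j + R_j^t w_j\|_{\bar{R}_j^{-1}}^2 = \|\bar{R}_j R_j^{-1} v_j\|_{\bar{R}_j^{-1}}^2 + 2(v_j, w_j) + \|R_j^t w_j\|_{\bar{R}_j^{-1}}^2
\end{equation*}
and sum over $j$. Using $\|\bar{R}_j R_j^{-1} v_j\|_{\bar{R}_j^{-1}}^2 = 2(R_j^{-1} v_j, v_j) - \|v_j\|_{A_j}^2$ and the definition~\eqref{MSC_P_j} of $P_j$, the first two contributions collapse into the full Galerkin form $(A \sum_j \Pi_j v_j, \sum_j \Pi_j v_j) = |v + \phi|_A^2 = |v|_A^2 = 1$, which is the main technical obstacle and is precisely where the normalization $|v|_A = 1$ enters. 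The specialization~\eqref{Thm1:XZ_3} to $R_j = A_j^{-1}$ is then immediate: $\bar{R}_j = A_j^{-1}$ gives $\bar{R}_j R_j^{-1} v_j = v_j$, $R_j^t w_j = P_j \sum_{k \geq j} \Pi_k v_k - v_j$, and another application of $P_j \Pi_j v_j = v_j$ reduces the sums to the stated closed-form expressions.
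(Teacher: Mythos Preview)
Your proposal is correct and follows essentially the same route as the paper: both apply \cref{Thm:error_propagation_aux_singular_general} with $\undertilde{W}=\prod_j\mathcal{R}(A_j)$, establish the factorization $\undertilde{\bar{B}}_{\mathrm{SSC}}=\undertilde{B}_{\mathrm{SSC}}^t\undertilde{R}^{-t}\undertilde{\bar{R}}\undertilde{R}^{-1}\undertilde{B}_{\mathrm{SSC}}$ (your $\undertilde{y}=\undertilde{R}^t\undertilde{B}_{\mathrm{SSC}}^{-t}\undertilde{v}$ is exactly the paper's $(\undertilde{\bar{R}}\undertilde{R}^{-1}+\undertilde{R}^t\undertilde{\hat{L}}^t)\undertilde{v}$), and identify the blocks with $w_i$ via~\eqref{MSC_P_j_properties}. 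The only cosmetic difference is that the paper obtains $c_1=1+c_0$ from the alternative expansion $\undertilde{\bar{B}}_{\mathrm{SSC}}^{-1}=\undertilde{A}+\undertilde{\hat{L}}\undertilde{R}\undertilde{\bar{R}}^{-1}\undertilde{R}^t\undertilde{\hat{L}}^t$, whereas you reach the same conclusion by expanding the square and telescoping $\sum_j(\Pi_jv_j,\sum_{k\geq j}\Pi_kv_k)_A$ into $|v|_A^2$.
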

\begin{proof}
To prove the convergence of SSC, it suffices, by \cref{Thm:error_propagation_aux_singular_general}, to show that the operator 
\(\undertilde{\bar{B}}_{\mathrm{SSC},\undertilde{Q}} \colon \undertilde{W} \to \undertilde{W}\), defined analogously to~\eqref{symmetrized_aux_general}, is SPD, where $\undertilde{W}$ and $\undertilde{Q}$ are defined in~\eqref{MSC_W}.  
With a slight abuse of notation, we drop the subscript $\undertilde{Q}$ and write $\undertilde{\bar{B}}_{\mathrm{SSC}} \colon \undertilde{W} \to \undertilde{W}$ whenever no ambiguity arises.

By \cref{Cor:iterative}, the definiteness of each \( \bar{R}_{j} \) implies the invertibility of each \( R_{j} \), and hence \( \undertilde{R} = \operatorname{diag} (R_1, \dots, R_J) \colon \undertilde{W} \to \undertilde{W} \) is nonsingular. Then, by \cref{Lem:inverse_block_triangular}, \( \undertilde{B}_{\mathrm{SSC}} \colon \undertilde{W} \to \undertilde{W} \) is also nonsingular.
By direct calculation, we obtain
\begin{equation*}
\undertilde{\bar{B}}_{\mathrm{SSC}} 
= \undertilde{B}_{\mathrm{SSC}}^t \undertilde{R}^{-t} \undertilde{\bar{R}} \undertilde{R}^{-1} \undertilde{B}_{\mathrm{SSC}},
\end{equation*}
where
\begin{equation*}
    \undertilde{\bar{R}} = \operatorname{diag} (\bar{R}_1, \dots, \bar{R}_J) = \undertilde{R} + \undertilde{R}^t - \undertilde{R}^t \undertilde{D} \undertilde{R} \colon \undertilde{W} \to \undertilde{W}.
\end{equation*}
Hence, the definiteness of \( \undertilde{\bar{R}} \), which is equivalent to the definiteness of each \( \bar{R}_{j} \), implies that \( \undertilde{\bar{B}}_{\mathrm{SSC}} \) is SPD. Consequently, SSC is convergent.

Next, we prove~\eqref{Thm1:XZ}.
By \cref{Thm:error_propagation_aux_singular_general}, we have
\begin{equation}
\label{Thm2:XZ}
| I - B_{\mathrm{SSC}} A |_A^2 = 1 - \left( \sup_{ \substack{ v \in \mathcal{R} (A),\\ | v |_A = 1 }} \inf_{\phi \in \mathcal{N} (A)} \inf_{ \substack{ \undertilde{v} \in \undertilde{W},\\ \Pi \undertilde{v} = v + \phi } } (\undertilde{\bar{B}}_{\mathrm{SSC}}^{-1} \undertilde{v}, \undertilde{v} ) \right)^{-1}.
\end{equation}
Since $\undertilde{R}$ and $\undertilde{\bar{R}}$ are nonsingular, a calculation similar to that in~\cite[Theorem~3]{Chen:2011} yields
\begin{equation*}
\undertilde{\bar{B}}_{\mathrm{SSC}}^{-1} 
= (\undertilde{\bar{R}} \undertilde{R}^{-1} + \undertilde{R}^{t} \undertilde{\hat{L}}^t )^t
\undertilde{\bar{R}}^{-1}
(\undertilde{\bar{R}} \undertilde{R}^{-1} + \undertilde{R}^{t} \undertilde{\hat{L}}^t ) 
= \undertilde{A} + \undertilde{\hat{L}} \undertilde{R} \undertilde{\bar{R}}^{-1}
\undertilde{R}^t \undertilde{\hat{L}}^t,
\end{equation*}
where
\begin{equation*}
\undertilde{\hat{L}} = \undertilde{Q} ( \undertilde{L} + \undertilde{D} ) \undertilde{Q}^t - \undertilde{R}^{-t} \colon \undertilde{W} \to \undertilde{W}.
\end{equation*}
Hence, we get
\begin{equation}
\label{Thm3:XZ}
(\undertilde{\bar{B}}_{\mathrm{SSC}}^{-1} \undertilde{v}, \undertilde{v} )
= \| \bar{\undertilde{R}} \undertilde{R}^{-1} \undertilde{v} + \undertilde{R}^t \undertilde{\hat{L}}^t \undertilde{v} \|_{\bar{\undertilde{R}}^{-1}}^2
= | \Pi \undertilde{v} |_A^2 + \| \undertilde{R}^t \undertilde{\hat{L}}^t \undertilde{v} \|_{\bar{\undertilde{R}}^{-1}}^2,\ \undertilde{v} \in \undertilde{W},
\end{equation}
We observe that, for each $1 \leq i \leq J$, the $i$th component $w_i$ of $\undertilde{\hat{L}}^t \undertilde{v}$ satisfies
\begin{equation}
\label{Thm4:XZ}
w_i
= Q_i \Pi_i^t A \sum_{j = i}^J \Pi_j v_j - R_{i}^{-1} v_i
= A_{i} P_i \sum_{j = i}^J \Pi_j v_j - R_{i}^{-1} v_i.
\end{equation}
where the last equality follows from~\eqref{MSC_P_j_properties}.
Combining~\eqref{Thm2:XZ},~\eqref{Thm3:XZ}, and~\eqref{Thm4:XZ} yields~\eqref{Thm1:XZ_1} and~\eqref{Thm1:XZ_2}.

To prove~\eqref{Thm1:XZ_3}, observe that, if $R_{i} = A_{i}^{-1}$, then
\begin{equation*}
R_{i}^t w_i = P_{i} \sum_{j=i}^J \Pi_j v_j - v_i
= P_{i} \sum_{j=i+1}^J \Pi_j v_j,
\end{equation*}
where the last inequality is due to~\eqref{MSC_P_j_properties}.
This completes the proof.
\end{proof}

In view of \cref{Thm:error_propagation_singular_general}, \cref{Thm:XZ} shows that SSC converges provided that each restricted local solver \( R_{j} \) is convergent. Equivalent conditions for the convergence of \( R_{j} \) can be found in~\cite{Brenner:2013,LWXZ:2008,WLXZ:2008}.

We also remark that several different proofs of the Xu--Zikatanov identity in the SPD case have appeared in the literature; see~\cite{Brenner:2013,Chen:2011,CXZ:2008,XZ:2002}. Applications of the Xu--Zikatanov identity to the analysis of various multilevel iterative methods can be found in~\cite{Brenner:2013,LWXZ:2008,XZ:2002}.

\section{Application II: Hiptmair--Xu preconditioners}
\label{Sec:HX}
The Hiptmair--Xu preconditioners~\cite{HX:2007,HX:2008} refer to a class of optimal preconditioners for \( H(\operatorname{curl}) \)- and \( H(\operatorname{div}) \)-problems based on the auxiliary space method~\cite{Xu:1996}.
Some recent developments can be found in~\cite{Hu:2021,Hu:2023}.
In this section, we present a refined convergence analysis of the Hiptmair--Xu preconditioners based on the auxiliary space theory developed in this paper.

\subsection{Preliminaries}
Let \( \Omega \subset \mathbb{R}^3 \) be a bounded polyhedral domain.  
We consider the following model variational problem: find \( u \in H_0 (\mathsf{D}; \Omega) \) such that  
\begin{equation}
\label{HX_model_continuous}
(\mathsf{D} u, \mathsf{D} v)_{L^2} + \epsilon (u, v)_{L^2} = f(v),
\quad v \in H_0 (\mathsf{D}; \Omega),
\end{equation}
where \( \mathsf{D} \) denotes a differential operator such as \( \operatorname{grad} \), \( \operatorname{curl} \), or \( \operatorname{div} \), \( f \) is a linear functional on \( H_0 (\mathsf{D}; \Omega) \), and $\epsilon$ is a nonnegative parameter.
For simplicity, we assume that $\epsilon \in [0,1]$; the case $\epsilon > 1$ was discussed in~\cite{HX:2008}.

To discretize~\eqref{HX_model_continuous}, we consider a quasi-uniform triangulation \( \mathcal{T}_h \) of \( \Omega \) and the corresponding lowest-order conforming finite element space \( H_h(\mathsf{D}) \), where $h \in L^{\infty} (\Omega)$ stands for the piecewise constant mesh diameter function; see~\cite{Hiptmair:2002,HX:2007} for details.  
Applying the Galerkin method to~\eqref{HX_model_continuous} using the space $H_h(\mathsf{D})$, we obtain a linear system of the form
\begin{equation}
\label{HX_model}
A_{\mathsf{D}} u = f.
\end{equation}
In~\eqref{HX_model}, $A_{\mathsf{D}}$ is semi-SPD when $\epsilon = 0$ and $\mathsf{D}$ is either $\operatorname{curl}$ or $\operatorname{div}$, and is SPD otherwise.

For notational convenience, we define
\begin{equation*}
\mathsf{D}^0 = \operatorname{grad}, \quad
\mathsf{D}^1 = \operatorname{curl}, \quad
\mathsf{D}^2 = \operatorname{div}, \quad
\mathsf{D}^3 = I.
\end{equation*}
In addition, we let \(S_h\) denote the space of continuous and piecewise linear functions in either $H_0^1 (\Omega)$ or $H_0^1 (\Omega)^3$, depending on the context.

As discussed in~\cite{HX:2007}, the finite element space \( H_h(\mathsf{D}) \) admits a canonical interpolation operator \( \Pi_h^{\mathsf{D}} \colon C(\Omega) \to H_h(\mathsf{D}) \), which assigns a unique finite element function matching the same nodal values.  
The canonical interpolation operator \( \Pi_h^{\mathsf{D}^k} \) (\( k = 0,1,2 \)) satisfies the following properties:
\begin{equation}
\label{canonical_interpolation}
\mathsf{D}^k \Pi_h^{\mathsf{D}^k} v_h = \Pi_h^{\mathsf{D}^{k+1}} \mathsf{D}^k v_h, \quad
\| \Pi_h^{\mathsf{D}^k} v_h \|_{L^2} \lesssim \| v_h \|_{L^2}, \quad v_h \in S_h.
\end{equation}

The most important properties of the spaces \( H_h(\operatorname{curl}) \) and \( H_h(\operatorname{div}) \) for constructing the Hiptmair--Xu preconditioners are the so-called discrete regular decompositions~\cite{HX:2007,Hu:2021}.  
For any \( v_h \in H_h(\operatorname{curl}; \Omega) \), there exist \( \tilde{v}_h \in H_h(\operatorname{curl}; \Omega) \), \( \psi_h \in H_h(\operatorname{grad}; \Omega)^3 \), and \( p_h \in H_h(\operatorname{grad}; \Omega) \) such that
\begin{equation}
\label{regular_decomposition_curl}
\begin{aligned}
&v_h = \tilde{v}_h + \Pi_h^{\operatorname{curl}} \psi_h + \operatorname{grad} p_h, \\
&\| h^{-1} \tilde{v}_h \|_{L^2}^2 + \| \psi_h \|_{H^1}^2 \lesssim \| \operatorname{curl} v_h \|_{L^2}^2, \quad
\| p_h \|_{H^1}^2 \lesssim \| v_h \|_{H(\operatorname{curl})}^2.
\end{aligned}
\end{equation}
Similarly, for any \( v_h \in H_h(\operatorname{div}; \Omega) \), there exist \( \tilde{v}_h \in H_h(\operatorname{div}; \Omega) \), \( \psi_h \in H_h(\operatorname{grad}; \Omega)^3 \), and \( w_h \in H_h(\operatorname{curl}; \Omega) \) such that
\begin{equation}
\label{regular_decomposition_div}
\begin{aligned}
&v_h = \tilde{v}_h + \Pi_h^{\operatorname{div}} \psi_h + \operatorname{curl} w_h, \\
&\| h^{-1} \tilde{v}_h \|_{L^2}^2 + \| \psi_h \|_{H^1}^2 \lesssim \| \operatorname{div} v_h \|_{L^2 }^2, \quad
\| w_h  \|_{H(\operatorname{curl})}^2 \lesssim \| v_h \|_{H(\operatorname{div})}^2.
\end{aligned}
\end{equation}

\subsection{\texorpdfstring{$H(\operatorname{curl})$}{H(curl)} problems}
For $\epsilon > 0$, the Hiptmair--Xu preconditioner \( B_{\operatorname{curl}} \) for the linear operator \( A_{\operatorname{curl}} \) given in~\eqref{HX_model} with $\mathsf{D} = \operatorname{curl}$ is defined by
\begin{equation}
\label{HX_curl}
B_{\operatorname{curl}} =
D_{\operatorname{curl}}^{-1} +
\Pi_h^{\operatorname{curl}} A_{\operatorname{grad}^3}^{-1} (\Pi_h^{\operatorname{curl}})^t +
\epsilon^{-1} \operatorname{grad} A_{\operatorname{grad}}^{-1} \operatorname{grad}^t,
\end{equation}
where \( D_{\operatorname{curl}}^{-1} \) is the nodal Jacobi smoother.  
We observe that the structure of \( B_{\operatorname{curl}} \) is consistent with the auxiliary space preconditioner given in~\eqref{iterative_aux}.  
Namely, we have
\begin{equation}
\label{HX_curl_aux}
\begin{aligned}
&B_{\operatorname{curl}} = \Pi \undertilde{B}_{\operatorname{curl}} \Pi^t, \quad \text{with} \\
&V = H_h (\operatorname{curl}), \quad
\undertilde{V} = H_h(\operatorname{curl}) \times H_h(\operatorname{grad})^3 \times H_h(\operatorname{grad}), \\
&\Pi = \left[ I,\ \Pi_h^{\operatorname{curl}},\ \operatorname{grad} \right], \quad
\undertilde{B}_{\operatorname{curl}} =
\begin{bmatrix}
D_{\operatorname{curl}}^{-1} & 0 & 0 \\
0 & A_{\operatorname{grad}^3}^{-1} & 0 \\
0 & 0 & \epsilon^{-1} A_{\operatorname{grad}}^{-1}
\end{bmatrix}.
\end{aligned}
\end{equation}

On the other hand, for the case $\epsilon = 0$, the last term in~\eqref{HX_curl} can be dropped.
That is, we define
\begin{equation}
\label{HX_curl_singular}
B_{\operatorname{curl}}
= D_{\operatorname{curl}}^{-1}
+ \Pi_h^{\operatorname{curl}} A_{\operatorname{grad}^3}^{-1} (\Pi_h^{\operatorname{curl}})^t,
\end{equation}
so that we have the following structure of the auxiliary space preconditioner~\eqref{iterative_aux}:
\begin{equation}
\label{HX_curl_singular_aux}
\begin{aligned}
&B_{\operatorname{curl}} = \Pi \undertilde{B}_{\operatorname{curl}} \Pi^t, \quad \text{with} \\
&V = H_h (\operatorname{curl}), \quad
\undertilde{V} = H_h(\operatorname{curl}) \times H_h(\operatorname{grad})^3, \\
&\Pi = \left[ I,\ \Pi_h^{\operatorname{curl}} \right], \quad
\undertilde{B}_{\operatorname{curl}} =
\begin{bmatrix}
D_{\operatorname{curl}}^{-1} & 0 \\
0 & A_{\operatorname{grad}^3}^{-1}
\end{bmatrix}.
\end{aligned}
\end{equation}

Hence, the condition number \( \kappa(B_{\operatorname{curl}} A_{\operatorname{curl}}) \) can be estimated using \cref{Thm:condition_number_aux,Thm:condition_number_aux_singular}. 
In \cref{Thm:HX_curl}, we prove the optimality of the Hiptmair--Xu preconditioner; namely, \( \kappa(B_{\operatorname{curl}} A_{\operatorname{curl}}) \) is bounded by a uniform constant~\cite[Theorem~7.1]{HX:2007}.

\begin{theorem}
\label{Thm:HX_curl}
The Hiptmair--Xu preconditioner $B_{\operatorname{curl}}$ defined in~\eqref{HX_curl} and~\eqref{HX_curl_singular} satisfies
\begin{equation*}
\kappa (B_{\operatorname{curl}} A_{\operatorname{curl}}) \lesssim 1.
\end{equation*}
\end{theorem}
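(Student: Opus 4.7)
The plan is to apply \cref{Thm:condition_number_aux_singular} (which covers both the SPD and semi-SPD cases in a unified way) to the auxiliary space decompositions~\eqref{HX_curl_aux} and~\eqref{HX_curl_singular_aux}. Since $\undertilde{B}_{\operatorname{curl}}$ is block diagonal and SPD, bounding $\kappa(B_{\operatorname{curl}} A_{\operatorname{curl}}) \lesssim 1$ uniformly in $h$ and in $\epsilon \in [0,1]$ reduces, via \cref{Thm:condition_number_aux_singular}, to establishing the two-sided estimate
\begin{equation*}
\inf_{\phi \in \mathcal{N}(A_{\operatorname{curl}})} \inf_{\undertilde{v}: \Pi \undertilde{v} = v + \phi} (\undertilde{B}_{\operatorname{curl}}^{-1} \undertilde{v}, \undertilde{v}) \eqsim |v|_{A_{\operatorname{curl}}}^2, \quad v \in \mathcal{R}(A_{\operatorname{curl}}),
\end{equation*}
where $(\undertilde{B}_{\operatorname{curl}}^{-1} \undertilde{v}, \undertilde{v}) = \|v_1\|_{D_{\operatorname{curl}}}^2 + \|\psi_h\|_{A_{\operatorname{grad}^3}}^2 + \epsilon \|p_h\|_{A_{\operatorname{grad}}}^2$ in the case $\epsilon > 0$, with the last term absent when $\epsilon = 0$. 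When $\epsilon > 0$ the null space is trivial and the outer infimum disappears; when $\epsilon = 0$ we have $\mathcal{N}(A_{\operatorname{curl}}) = \operatorname{grad} H_h(\operatorname{grad})$ by the discrete exactness.

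For the $\lesssim$ inequality, which yields the lower bound on $\lambda_{\min}(B_{\operatorname{curl}} A_{\operatorname{curl}})$, I invoke the discrete regular decomposition~\eqref{regular_decomposition_curl} to write $v = \tilde{v}_h + \Pi_h^{\operatorname{curl}} \psi_h + \operatorname{grad} p_h$ with the stated stability bounds. For $\epsilon > 0$ I choose $\undertilde{v} = (\tilde{v}_h, \psi_h, p_h)$ directly; for $\epsilon = 0$ I take $\phi = -\operatorname{grad} p_h \in \mathcal{N}(A_{\operatorname{curl}})$ so that $v + \phi = \tilde{v}_h + \Pi_h^{\operatorname{curl}} \psi_h$. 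Then $\|\tilde{v}_h\|_{D_{\operatorname{curl}}}^2 \lesssim \|h^{-1} \tilde{v}_h\|_{L^2}^2 \lesssim \|\operatorname{curl} v\|_{L^2}^2$ by the standard scaling of the nodal Jacobi smoother, $\|\psi_h\|_{A_{\operatorname{grad}^3}}^2 \lesssim \|\psi_h\|_{H^1}^2 \lesssim \|\operatorname{curl} v\|_{L^2}^2$, and $\epsilon \|p_h\|_{A_{\operatorname{grad}}}^2 \lesssim \epsilon \|p_h\|_{H^1}^2 \lesssim \epsilon \|v\|_{H(\operatorname{curl})}^2 \lesssim |v|_{A_{\operatorname{curl}}}^2$, where $\epsilon \leq 1$ is used to absorb $\epsilon \|\operatorname{curl} v\|_{L^2}^2$ into the curl term of $|v|_{A_{\operatorname{curl}}}^2$.

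For the $\gtrsim$ inequality, giving the upper bound on $\lambda_{\max}(B_{\operatorname{curl}} A_{\operatorname{curl}})$, I must show that \emph{every} admissible $\undertilde{v}$ has energy controlling $|\Pi \undertilde{v}|_{A_{\operatorname{curl}}}^2 = |v|_{A_{\operatorname{curl}}}^2$. The commuting identity~\eqref{canonical_interpolation} gives $\operatorname{curl}(\Pi_h^{\operatorname{curl}} \psi_h) = \Pi_h^{\operatorname{div}} \operatorname{curl} \psi_h$, and $\operatorname{curl}(\operatorname{grad} p_h) = 0 = \operatorname{curl}\phi$, so $\|\operatorname{curl} v\|_{L^2}^2 \lesssim \|\operatorname{curl} v_1\|_{L^2}^2 + \|\operatorname{curl} \psi_h\|_{L^2}^2$ by $L^2$-stability of $\Pi_h^{\operatorname{div}}$. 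The first piece is handled by an inverse estimate $\|\operatorname{curl} v_1\|_{L^2}^2 \lesssim \|h^{-1} v_1\|_{L^2}^2 \lesssim \|v_1\|_{D_{\operatorname{curl}}}^2$, and the second by $\|\psi_h\|_{A_{\operatorname{grad}^3}}^2$. The remaining $\epsilon \|v\|_{L^2}^2$ part (only relevant for $\epsilon > 0$) is bounded by $\epsilon \|v_1\|_{L^2}^2 + \epsilon \|\Pi_h^{\operatorname{curl}}\psi_h\|_{L^2}^2 + \epsilon \|\operatorname{grad} p_h\|_{L^2}^2$, each term being absorbed into the corresponding component energy via~\eqref{canonical_interpolation} and trivial estimates.

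The main obstacle is the $\lesssim$ direction, specifically the existence of a stable decomposition with $\epsilon$-independent constants; this is where the nontrivial analytical content of~\eqref{regular_decomposition_curl} enters, in particular the $H^1$ control of the smooth part $\psi_h$ by $\|\operatorname{curl} v\|_{L^2}^2$ alone rather than the full $H(\operatorname{curl})$-norm. The auxiliary space theory developed in this paper then cleanly converts this analytical input into the condition number bound, and the treatment of the semidefinite case $\epsilon = 0$ requires no separate argument: the infimum over $\mathcal{N}(A_{\operatorname{curl}})$ in \cref{Thm:condition_number_aux_singular} precisely absorbs the missing $\operatorname{grad} p_h$ component of the regular decomposition, making the two cases~\eqref{HX_curl} and~\eqref{HX_curl_singular} amenable to a single unified analysis.
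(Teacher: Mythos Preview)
Your proposal is correct and follows essentially the same approach as the paper's own proof: both reduce the condition number bound via the auxiliary space theorems to verifying a stable decomposition (from the discrete regular decomposition~\eqref{regular_decomposition_curl}) and a boundedness estimate (from the commuting property~\eqref{canonical_interpolation}, the $L^2$-stability of the canonical interpolants, and the coloring/inverse-inequality control of the Jacobi smoother), and both handle $\epsilon=0$ by absorbing $\operatorname{grad} p_h$ into $\phi\in\mathcal{N}(A_{\operatorname{curl}})$. The only cosmetic difference is that the paper treats $\epsilon>0$ and $\epsilon=0$ separately via \cref{Thm:condition_number_aux} and \cref{Thm:condition_number_aux_singular}, whereas you invoke \cref{Thm:condition_number_aux_singular} uniformly; since the latter reduces to the former when $\mathcal{N}(A)=\{0\}$, this is not a genuine departure.
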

\begin{proof}
We first consider the case $\epsilon > 0$, i.e., when $A_{\operatorname{curl}}$ is SPD.
Note that~\eqref{HX_curl_aux} implies
\begin{equation*}
\| \undertilde{v}_h \|_{\undertilde{B}_{\operatorname{curl}}^{-1}}^2
= \| \tilde{v}_h \|_{D_{\operatorname{curl}}}^2 + \| \psi_h \|_{A_{\operatorname{grad}^3}}^2 + \epsilon \| p_h\|_{A_{\operatorname{grad}}}^2,
\quad \undertilde{v}_h = (\tilde{v}_h, \psi_h, p_h) \in \undertilde{V}.
\end{equation*}
Thanks to \cref{Thm:condition_number_aux}, it suffices to verify the following:
\begin{enumerate}[(a)]
\item For any $\tilde{v}_h \in H_h (\operatorname{curl})$, $\psi_h \in H_h (\operatorname{grad})^3$, and $p_h \in H_h (\operatorname{grad})$, we have
\begin{equation*}
\| \tilde{v}_h + \Pi_h^{\operatorname{curl}} \psi_h + \operatorname{grad} p_h \|_{A_{\operatorname{curl}}}^2
\lesssim \| \tilde{v}_h \|_{D_{\operatorname{curl}}}^2 + \| \psi_h \|_{A_{\operatorname{grad}^3}}^2 + \epsilon \| p_h\|_{A_{\operatorname{grad}}}^2.
\end{equation*}
\item For any $v_h \in H_h (\operatorname{curl})$, there exist $\tilde{v}_h \in H_h (\operatorname{curl})$, $\psi_h \in H_h (\operatorname{grad})^3$, and $p_h \in H_h (\operatorname{grad})$ such that $v_h = \tilde{v}_h + \Pi_h^{\operatorname{curl}} \psi_h + \operatorname{grad} p_h$ and
\begin{equation*}
\| \tilde{v}_h \|_{D_{\operatorname{curl}}}^2 + \| \psi_h \|_{A_{\operatorname{grad}^3}}^2 + \epsilon \| p_h\|_{A_{\operatorname{grad}}}^2
\lesssim \| v_h \|_{A_{\operatorname{curl}}}^2.
\end{equation*}
\end{enumerate}

To prove~(a), take any $\tilde{v}_h \in H_h(\operatorname{curl})$, $\psi_h \in H_h(\operatorname{grad})^3$, and $p_h \in H_h(\operatorname{grad})$.  
By the standard coloring argument, we have the estimate
\begin{equation}
\label{Thm1:HX_curl}
\| \tilde{v}_h \|_{A_{\operatorname{curl}}}^2
\lesssim \| \tilde{v}_h \|_{D_{\operatorname{curl}}}^2.
\end{equation}
On the other hand, by the properties of the canonical interpolation operators presented in~\eqref{canonical_interpolation}, we deduce
\begin{equation}
\label{Thm2:HX_curl}
\begin{aligned}
\| \Pi_h^{\operatorname{curl}} \psi_h \|_{A_{\operatorname{curl}}}^2
&= \| \operatorname{curl} \Pi_h^{\operatorname{curl}} \psi_h \|_{L^2}^2 + \epsilon \| \Pi_h^{\operatorname{curl}} \psi_h \|_{L^2}^2 \\
&= \| \Pi_h^{\operatorname{div}} \operatorname{curl} \psi_h \|_{L^2}^2 + \epsilon \| \Pi_h^{\operatorname{curl}} \psi_h \|_{L^2}^2 \\
&\lesssim \| \operatorname{curl} \psi_h \|_{L^2}^2 + \epsilon \| \psi_h \|_{L^2}^2 \\
&\lesssim  \| \operatorname{grad} \psi_h \|_{L^2}^2 + \epsilon \| \psi_h \|_{L^2}^2\\
&= \| \psi_h \|_{A_{\operatorname{grad}^3}}^2.
\end{aligned}
\end{equation}
Additionally, we directly obtain
\begin{equation}
\label{Thm3:HX_curl}
\| \operatorname{grad} p_h \|_{A_{\operatorname{curl}}}^2 
= \epsilon \| \operatorname{grad} p_h \|_{L^2}^2 
\leq \epsilon \| p_h \|_{A_{\operatorname{grad}}}^2.
\end{equation}
Combining the estimates~\eqref{Thm1:HX_curl}, \eqref{Thm2:HX_curl}, and~\eqref{Thm3:HX_curl}, we arrive at
\begin{align*}
\big\| \tilde{v}_h + \Pi_h^{\operatorname{curl}} \psi_h + \operatorname{grad} p_h \big\|_{A_{\operatorname{curl}}}^2 
&\lesssim \| \tilde{v}_h \|_{A_{\operatorname{curl}}}^2 + \| \Pi_h^{\operatorname{curl}} \psi_h \|_{A_{\operatorname{curl}}}^2 + \| \operatorname{grad} p_h \|_{A_{\operatorname{curl}}}^2 \\
&\lesssim \| \tilde{v}_h \|_{D_{\operatorname{curl}}}^2 + \| \psi_h \|_{A_{\operatorname{grad}^3}}^2 + \epsilon  \| p_h \|_{A_{\operatorname{grad}}}^2,
\end{align*}
which completes the proof of~(a).

Next, we prove~(b).
Given $v_h \in H_h (\operatorname{curl})$, let $\tilde{v}_h \in H_h (\operatorname{curl})$, $\psi_h \in H_h (\operatorname{grad})^3$, and $p_h \in H_h (\operatorname{grad})$ be given by the discrete regular decomposition~\eqref{regular_decomposition_curl}.
Then we get
\begin{align*}
\| h^{-1} \tilde{v}_h \|_{L^2}^2 + \| \psi_h \|_{A_{\operatorname{grad}^3}}^2
\leq \| h^{-1} \tilde{v}_h \|_{L^2}^2 + \| \psi_h \|_{H^1}^2
\lesssim \| \operatorname{curl} v_h \|_{L^2}^2
\leq \| v_h \|_{A_{\operatorname{curl}}}^2, \\
\epsilon \| p_h \|_{A_{\operatorname{grad}}}^2
\leq \epsilon \| p_h \|_{H^1}^2
\lesssim \epsilon \| v_h \|_{H(\operatorname{curl})}^2
\leq \| v_h \|_{A_{\operatorname{curl}}}^2.
\end{align*}
Hence, we only need to prove that
\begin{equation*}
\| \tilde{v}_h \|_{D_{\operatorname{curl}}}^2 \lesssim \| h^{-1} \tilde{v}_h \|_{L^2}^2,
\end{equation*}
which is a direct consequence of the inverse inequality and the finite overlap property.
This completes the proof of the case $\epsilon > 0$.

Next, we consider the case $\epsilon = 0$, i.e., when $A_{\operatorname{curl}}$ is semi-SPD.
Thanks to \cref{Thm:condition_number_aux_singular} and~\eqref{HX_curl_singular_aux}, it suffices to verify the following:
\begin{enumerate}[(a)$'$]
\item For any $\tilde{v}_h \in H_h (\operatorname{curl})$, $\psi_h \in H_h (\operatorname{grad})^3$, and $\phi_h \in \mathcal{N} (\operatorname{curl})$, we have
\begin{equation*}
| \tilde{v}_h + \Pi_h^{\operatorname{curl}} \psi_h - \phi_h |_{A_{\operatorname{curl}}}^2
\lesssim \| \tilde{v}_h \|_{D_{\operatorname{curl}}}^2 + \| \psi_h \|_{A_{\operatorname{grad}^3}}^2.
\end{equation*}
\item For any $v_h \in H_h (\operatorname{curl})$, there exist $\tilde{v}_h \in H_h (\operatorname{curl})$, $\psi_h \in H_h (\operatorname{grad})^3$, and $\phi_h \in \mathcal{N} (\operatorname{curl})$ such that $v_h + \phi_h = \tilde{v}_h + \Pi_h^{\operatorname{curl}} \psi_h$ and
\begin{equation*}
\| \tilde{v}_h \|_{D_{\operatorname{curl}}}^2 + \| \psi_h \|_{A_{\operatorname{grad}^3}}^2
\lesssim | v_h |_{A_{\operatorname{curl}}}^2.
\end{equation*}
\end{enumerate}
We readily observe that (a)$'$ can be established by the same argument as (a).  
To prove (b)$'$, given $v_h \in H_h(\operatorname{curl})$, let 
$\tilde{v}_h \in H_h(\operatorname{curl})$, 
$\psi_h \in H_h(\operatorname{grad})^3$, and 
$p_h \in H_h(\operatorname{grad})$ 
be given by the discrete regular decomposition~\eqref{regular_decomposition_curl}.  
We further set $\phi_h = - \operatorname{grad} p_h$.  
Then the same argument as in (b) applies, which completes the proof.
\end{proof}

\begin{remark}
\label{Rem:smoother_grad}
To construct more practical preconditioners, we may replace \( A_{\operatorname{grad}^3}^{-1} \) and \( A_{\operatorname{grad}}^{-1} \) in~\eqref{HX_curl} with suitable optimal preconditioners, such as the Bramble--Pasciak--Xu preconditioner~\cite{BPX:1990,LX:2016,Zhang:1992} or algebraic multigrid preconditioners~\cite{KV:2009,KV:2012}.  
The convergence analysis remains valid in this case as well.
\end{remark}

\subsection{\texorpdfstring{$H(\operatorname{div})$}{H(div)} problems}
For $\epsilon > 0$, the Hiptmair--Xu preconditioner \( B_{\operatorname{div}} \) for the linear operator \( A_{\operatorname{div}} \) given in~\eqref{HX_model} with $\mathsf{D} = \operatorname{div}$ is defined by
\begin{equation}
\label{HX_div}
B_{\operatorname{div}} =
D_{\operatorname{div}}^{-1} +
\Pi_h^{\operatorname{div}} A_{\operatorname{grad}^3}^{-1} (\Pi_h^{\operatorname{div}})^t +
\epsilon^{-1} \operatorname{curl} A_{\operatorname{curl}}^{-1} \operatorname{curl}^t,
\end{equation}
where \( D_{\operatorname{div}}^{-1} \) is the nodal Jacobi smoother.  
Similar to the case of \( H(\operatorname{curl}) \) problems, the structure of \( B_{\operatorname{div}} \) is consistent with the auxiliary space preconditioner~\eqref{iterative_aux}:
\begin{equation*}
\begin{aligned}
&B_{\operatorname{div}} = \Pi_{\operatorname{div}} \undertilde{B}_{\operatorname{div}} \Pi_{\operatorname{div}}^t, \quad \text{with} \\
&V = H_h (\operatorname{div}), \quad
\undertilde{V} = H_h(\operatorname{div}) \times H_h(\operatorname{grad})^3 \times H_h(\operatorname{curl}), \\
&\Pi_{\operatorname{div}} = \left[ I,\ \Pi_h^{\operatorname{div}},\ \operatorname{curl} \right], \quad
\undertilde{B}_{\operatorname{div}} =
\begin{bmatrix}
D_{\operatorname{div}}^{-1} & 0 & 0 \\
0 & A_{\operatorname{grad}^3}^{-1} & 0 \\
0 & 0 & A_{\operatorname{curl}}^{-1}
\end{bmatrix}.
\end{aligned}
\end{equation*}

For the case $\epsilon = 0$, similar to~\eqref{HX_curl_singular}, we define
\begin{equation}
\label{HX_div_singular}
B_{\operatorname{div}} =
D_{\operatorname{div}}^{-1} +
\Pi_h^{\operatorname{div}} A_{\operatorname{grad}^3}^{-1} (\Pi_h^{\operatorname{div}})^t,
\end{equation}
which satisfies the following structure of the auxiliary space preconditioner~\eqref{iterative_aux}:
\begin{equation*}
\begin{aligned}
&B_{\operatorname{div}} = \Pi_{\operatorname{div}} \undertilde{B}_{\operatorname{div}} \Pi_{\operatorname{div}}^t, \quad \text{with} \\
&V = H_h (\operatorname{div}), \quad
\undertilde{V} = H_h(\operatorname{div}) \times H_h(\operatorname{grad})^3, \\
&\Pi_{\operatorname{div}} = \left[ I,\ \Pi_h^{\operatorname{div}} \right], \quad
\undertilde{B}_{\operatorname{div}} =
\begin{bmatrix}
D_{\operatorname{div}}^{-1} & 0 \\
0 & A_{\operatorname{grad}^3}^{-1}
\end{bmatrix}.
\end{aligned}
\end{equation*}

Therefore, the condition number \( \kappa(B_{\operatorname{div}} A_{\operatorname{div}}) \) can be estimated using \cref{Thm:condition_number_aux,Thm:condition_number_aux_singular}, as presented in \cref{Thm:HX_div}~(see also~\cite[Theorem~7.2]{HX:2007}).  
Since the proof of \cref{Thm:HX_div} is nearly identical to that of \cref{Thm:HX_curl}, we omit it here.

\begin{theorem}
\label{Thm:HX_div}
The Hiptmair--Xu preconditioner $B_{\operatorname{div}}$ defined in~\eqref{HX_div} and~\eqref{HX_div_singular} satisfies
\begin{equation*}
\kappa (B_{\operatorname{div}} A_{\operatorname{div}}) \lesssim 1.
\end{equation*}
\end{theorem}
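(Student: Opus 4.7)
The plan is to mirror the proof of \cref{Thm:HX_curl} with the role substitutions $H_h(\operatorname{curl}) \leftrightarrow H_h(\operatorname{div})$, $\Pi_h^{\operatorname{curl}} \leftrightarrow \Pi_h^{\operatorname{div}}$, $\operatorname{grad} \leftrightarrow \operatorname{curl}$, and with the discrete regular decomposition~\eqref{regular_decomposition_curl} replaced by~\eqref{regular_decomposition_div}. Since the auxiliary space structure is parallel to~\eqref{HX_curl_aux} and~\eqref{HX_curl_singular_aux}, the auxiliary seminorm evaluates to
\[
\|\undertilde{v}_h\|_{\undertilde{B}_{\operatorname{div}}^{-1}}^2
= \|\tilde{v}_h\|_{D_{\operatorname{div}}}^2 + \|\psi_h\|_{A_{\operatorname{grad}^3}}^2 + \epsilon\,\|w_h\|_{A_{\operatorname{curl}}}^2,
\]
for $\undertilde{v}_h=(\tilde{v}_h,\psi_h,w_h)$, so by \cref{Thm:condition_number_aux} (for $\epsilon>0$) and \cref{Thm:condition_number_aux_singular} (for $\epsilon=0$) it suffices to establish a stability estimate and the existence of a stable splitting.

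For the stability estimate~(a), I would split $\tilde{v}_h+\Pi_h^{\operatorname{div}}\psi_h+\operatorname{curl} w_h$ by the triangle inequality and bound each piece separately. The term $\|\tilde{v}_h\|_{A_{\operatorname{div}}}^2 \lesssim \|\tilde{v}_h\|_{D_{\operatorname{div}}}^2$ follows from a standard finite overlap/coloring argument, exactly as in~\eqref{Thm1:HX_curl}. For the interpolation term, the commuting diagram property~\eqref{canonical_interpolation} gives $\operatorname{div}\Pi_h^{\operatorname{div}}\psi_h = \Pi_h^{\mathsf{D}^3}\operatorname{div}\psi_h$, and combined with the $L^2$-stability of $\Pi_h^{\operatorname{div}}$ this yields
\[
\|\Pi_h^{\operatorname{div}}\psi_h\|_{A_{\operatorname{div}}}^2
\lesssim \|\operatorname{div}\psi_h\|_{L^2}^2 + \epsilon\,\|\psi_h\|_{L^2}^2
\lesssim \|\psi_h\|_{A_{\operatorname{grad}^3}}^2,
\]
in analogy with~\eqref{Thm2:HX_curl}. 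Finally, since $\operatorname{div}\operatorname{curl} = 0$, we obtain
\[
\|\operatorname{curl} w_h\|_{A_{\operatorname{div}}}^2 = \epsilon\,\|\operatorname{curl} w_h\|_{L^2}^2 \leq \epsilon\,\|w_h\|_{A_{\operatorname{curl}}}^2,
\]
which is the analog of~\eqref{Thm3:HX_curl}. Summing yields the upper bound.

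For the existence of a stable splitting~(b), given $v_h\in H_h(\operatorname{div})$ I would invoke the discrete regular decomposition~\eqref{regular_decomposition_div} to produce $\tilde{v}_h$, $\psi_h$, and $w_h$ with $v_h=\tilde{v}_h+\Pi_h^{\operatorname{div}}\psi_h+\operatorname{curl} w_h$. The first inequality in~\eqref{regular_decomposition_div}, together with the inverse inequality $\|\tilde{v}_h\|_{D_{\operatorname{div}}}^2 \lesssim \|h^{-1}\tilde{v}_h\|_{L^2}^2$ and the embedding $\|\psi_h\|_{A_{\operatorname{grad}^3}}\leq\|\psi_h\|_{H^1}$ (valid since $\epsilon\leq 1$), controls the first two components by $\|\operatorname{div} v_h\|_{L^2}^2\leq\|v_h\|_{A_{\operatorname{div}}}^2$. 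The second inequality in~\eqref{regular_decomposition_div} gives $\epsilon\|w_h\|_{A_{\operatorname{curl}}}^2\leq\epsilon\|w_h\|_{H(\operatorname{curl})}^2\lesssim\epsilon\|v_h\|_{H(\operatorname{div})}^2\leq\|v_h\|_{A_{\operatorname{div}}}^2$, completing (b).

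For the case $\epsilon=0$, one replaces (a) and (b) with (a)$'$ and (b)$'$ obtained by allowing a null space correction $\phi_h\in\mathcal{N}(A_{\operatorname{div}})$. Bound (a)$'$ is proven by the identical argument as (a), since the seminorm $|\cdot|_{A_{\operatorname{div}}}$ coincides with $\|\operatorname{div}\cdot\|_{L^2}$ and the $\operatorname{curl} w_h$ contribution simply disappears. For (b)$'$, using~\eqref{regular_decomposition_div} I would set $\phi_h=-\operatorname{curl} w_h$, which lies in $\mathcal{N}(\operatorname{div})=\mathcal{N}(A_{\operatorname{div}})$, so that $v_h+\phi_h=\tilde{v}_h+\Pi_h^{\operatorname{div}}\psi_h$, and only the first inequality in~\eqref{regular_decomposition_div} is needed. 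The main subtlety is verifying that the auxiliary space argument correctly absorbs the null space via the extra infimum over $\phi\in\mathcal{N}(A)$ in \cref{Thm:condition_number_aux_singular}; once this bookkeeping is in place the calculation is routine and parallel to the $H(\operatorname{curl})$ case.
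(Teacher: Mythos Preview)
Your proposal is correct and is precisely the argument the paper has in mind: the paper states that the proof is ``nearly identical to that of \cref{Thm:HX_curl}'' and omits it, and you have carried out exactly that parallel argument with the substitutions $\operatorname{curl}\to\operatorname{div}$, $\operatorname{grad}\to\operatorname{curl}$, and~\eqref{regular_decomposition_curl}$\to$\eqref{regular_decomposition_div}.
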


\section{Application III: Auxiliary grid methods}
\label{Sec:Grid}
Auxiliary grid methods, first proposed in~\cite{Xu:1996} and later studied in, e.g.,~\cite{GWX:2016,WHCX:2013,ZX:2014}, provide a means of designing optimal or nearly optimal iterative methods for problems posed on unstructured grids.  
The basic idea behind auxiliary grid methods is to introduce a structured auxiliary grid that possesses a natural nested multilevel hierarchy, enabling the application of optimal multilevel preconditioners~\cite{BPX:1990,Zhang:1992}.  
In this section, we present an interpretation of auxiliary grid methods within the framework of the auxiliary space theory developed in this paper.

\subsection{Structured auxiliary grid}
Let $\Omega \subset \mathbb{R}^2$ be a bounded polygonal domain, and let $\mathcal{T}_h$ be a quasi-uniform triangulation of $\Omega$, where $h$ denotes the characteristic mesh size.  
Let $V$ denote the continuous piecewise linear finite element space defined on $\mathcal{T}_h$ with the homogeneous Dirichlet boundary condition.  
As a model problem, we consider the linear system~\eqref{model}, where the system matrix $A$ is the stiffness matrix arising from the finite element discretization of the Poisson problem using the space $V$; see, e.g.,~\cite{BS:2008} for details.

Given a uniform square partition of the entire space $\mathbb{R}^2$ with mesh size $h_0 \eqsim h$, let $\mathcal{T}_0$ denote the collection of squares that are fully contained in $\Omega$, and let $\Omega_0$ be the corresponding mesh domain determined by $\mathcal{T}_0$.  
Let $V_0$ be the continuous piecewise bilinear finite element space defined on $\mathcal{T}_0$, and let $A_0$ denote the associated stiffness matrix.

In~\cite[Lemmas~4.1 and~4.2]{Xu:1996}, the following estimates for the standard nodal interpolation operators $\Pi_h \colon V_0 \to V$ and $\Pi_0 \colon V \to V_0$ were established:
\begin{subequations}
\label{aux_grid_interpolation}
\begin{align}
\label{aux_grid_interpolation_h}
\| v_0 - \Pi_h v_0 \|_{L^2} &\lesssim h \| v_0 \|_{H^1}, &
\| \Pi_h v_0 \|_{H^1} &\lesssim \| v_0 \|_{H^1}, \quad
v_0 \in V_0, \\
\label{aux_grid_interpolation_0}
\| v - \Pi_0 v \|_{L^2} &\lesssim h \| v \|_{H^1}, &
\| \Pi_0 v \|_{H^1} &\lesssim \| v \|_{H^1}, \quad
v \in V.
\end{align}
\end{subequations}

\subsection{Auxiliary grid preconditioner}
The auxiliary grid preconditioner $B$ for the linear operator $A$ is defined by
\begin{equation}
\label{aux_grid_preconditioner}
B = D^{-1} + \Pi_h B_0 \Pi_h^t,
\end{equation}
where $D^{-1}$ is the nodal Jacobi smoother and $B_0$ is an appropriate preconditioner for $A_0$.
We observe that the structure of $B$ is consistent with the auxiliary space preconditioner given in~\eqref{iterative_aux}:
\begin{equation}
\label{aux_grid_preconditioner_aux}
B = \Pi \undertilde{B} \Pi^t, \quad \text{with} \quad
\undertilde{V} = V \times V_0,\
\Pi = [I,\ \Pi_h],\
\undertilde{B} = \begin{bmatrix} D^{-1} & 0 \\ 0 & B_0 \end{bmatrix}.
\end{equation}
Hence, the conditioner number $\kappa (BA)$ can be estimated using \cref{Thm:condition_number_aux}~(cf.~\cite{Xu:1996}).

\begin{theorem}
\label{Thm:aux_grid}
The auxiliary grid preconditioner $B$ defined in~\eqref{aux_grid_preconditioner} satisfies
\begin{equation*}
\kappa (BA) \lesssim \kappa (B_0 A_0).
\end{equation*}
\end{theorem}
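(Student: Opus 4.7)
The plan is to apply \cref{Thm:condition_number_aux} to the auxiliary space structure~\eqref{aux_grid_preconditioner_aux}, so that $\lambda_{\min}(BA)$ and $\lambda_{\max}(BA)$ are characterized as extrema of the quadratic form $\|\tilde v\|_D^2 + (B_0^{-1} v_0, v_0)$ over decompositions $v = \tilde v + \Pi_h v_0$ with $\tilde v \in V$ and $v_0 \in V_0$. I would establish $\lambda_{\max}(BA) \lesssim \lambda_{\max}(B_0 A_0)$ and $\lambda_{\min}(BA) \gtrsim \lambda_{\min}(B_0 A_0)$ separately, and then take the quotient.

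For the upper bound on $\lambda_{\max}(BA)$, I would show that, for \emph{every} decomposition $v = \tilde v + \Pi_h v_0$,
\begin{equation*}
\|v\|_A^2 \lesssim \|\tilde v\|_D^2 + \lambda_{\max}(B_0 A_0)\,(B_0^{-1} v_0, v_0).
\end{equation*}
This follows from three standard ingredients, combined via the triangle inequality: (i) the smoother estimate $\|\tilde v\|_A^2 \lesssim \|\tilde v\|_D^2$, a consequence of the inverse inequality on the quasi-uniform mesh $\mathcal{T}_h$; (ii) the $H^1$-stability $\|\Pi_h v_0\|_A \lesssim \|v_0\|_{A_0}$ from~\eqref{aux_grid_interpolation_h}; and (iii) the spectral inequality $\|v_0\|_{A_0}^2 \leq \lambda_{\max}(B_0 A_0)\,(B_0^{-1} v_0, v_0)$, which is a direct application of \cref{Thm:condition_number} to the auxiliary system. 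Taking the infimum over decompositions and over $v$ with $\|v\|_A=1$ then yields the desired upper bound.

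For the lower bound on $\lambda_{\min}(BA)$, I would exhibit a specific decomposition. Given $v \in V$, set $v_0 = \Pi_0 v \in V_0$ and $\tilde v = v - \Pi_h v_0 \in V$, and split
\begin{equation*}
\tilde v = (v - \Pi_0 v) + (\Pi_0 v - \Pi_h \Pi_0 v).
\end{equation*}
Applying the $L^2$-approximation estimates in~\eqref{aux_grid_interpolation}, together with the $H^1$-stability of $\Pi_0$, produces $\|\tilde v\|_{L^2} \lesssim h\,\|v\|_{H^1}$; combined with the nodal equivalence $\|\tilde v\|_D^2 \eqsim h^{-2}\|\tilde v\|_{L^2}^2$ for the Jacobi smoother on a quasi-uniform grid, this gives $\|\tilde v\|_D^2 \lesssim \|v\|_A^2$. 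Likewise, $(B_0^{-1} v_0, v_0) \leq \lambda_{\min}(B_0 A_0)^{-1}\|v_0\|_{A_0}^2 \lesssim \lambda_{\min}(B_0 A_0)^{-1}\|v\|_A^2$ by \cref{Thm:condition_number} and the $H^1$-stability of $\Pi_0$ in~\eqref{aux_grid_interpolation_0}. Summing and dividing by $\|v\|_A^2$ delivers $\lambda_{\min}(BA) \gtrsim \lambda_{\min}(B_0 A_0)$.

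The main subtlety is the matching of scales in the lower-bound step: the $O(h)$ approximation properties in~\eqref{aux_grid_interpolation} must exactly absorb the $h^{-2}$ factor inherent in the $D$-norm, so that $\|\tilde v\|_D^2$ is controlled by $\|v\|_A^2$ uniformly in $h$. Everything else is a direct transcription of the fictitious-space-style characterizations supplied by \cref{Thm:condition_number_aux,Thm:condition_number}.
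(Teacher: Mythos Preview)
Your proposal is correct and follows essentially the same route as the paper: apply \cref{Thm:condition_number_aux} to the structure~\eqref{aux_grid_preconditioner_aux}, obtain the upper eigenvalue bound from the smoother estimate $\|\tilde v\|_A \lesssim \|\tilde v\|_D$ together with the $H^1$-stability of $\Pi_h$ in~\eqref{aux_grid_interpolation_h}, and obtain the lower eigenvalue bound from the explicit decomposition $v_0 = \Pi_0 v$, $\tilde v = v - \Pi_h \Pi_0 v$, using the approximation estimates~\eqref{aux_grid_interpolation} to cancel the $h^{-2}$ factor in $\|\cdot\|_D$. The only cosmetic difference is that the paper phrases the smoother bound as a ``coloring argument'' rather than an inverse inequality, which for the nodal Jacobi smoother amounts to the same thing.
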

\begin{proof}
Note that~\eqref{aux_grid_preconditioner_aux} implies
\begin{equation*}
\| \undertilde{v} \|_{\undertilde{B}^{-1}}^2 = \| \tilde{v} \|_D^2 + \| v_0 \|_{B_0^{-1}}^2,
\quad \undertilde{v} = ( \tilde{v}, v_0) \in \undertilde{V}.
\end{equation*}
Thanks to \cref{Thm:condition_number_aux}, the desired result follows if we verify the following:
\begin{enumerate}[(a)]
\item For any $\undertilde{v} \in V$ and $v_0 \in V_0$, we have
\begin{equation*}
\| \tilde{v} + \Pi_h v_0 \|_A^2 \lesssim \lambda_{\max} (B_0 A_0) ( \| \tilde{v} \|_{D}^2 + \| v_0 \|_{B_0^{-1}}^2 ).
\end{equation*}
\item For any $v \in V$, there exist $\tilde{v} \in V$ and $v_0 \in V_0$ such that $v = \tilde{v} + \Pi_h v_0$ and
\begin{equation*}
\| \tilde{v} \|_D^2 + \| v_0 \|_{B_0^{-1}}^2 \lesssim \lambda_{\min} (B_0 A_0)^{-1} \| v \|_A^2.
\end{equation*}
\end{enumerate}

To prove~(a), it suffices to apply a coloring argument analogous to~\eqref{Thm1:HX_curl}, together with the interpolation estimate~\eqref{aux_grid_interpolation_h}.  
Namely, we have
\begin{equation*}
\| \tilde{v} + \Pi_h v_0 \|_A^2 
\lesssim \| \tilde{v} \|_A^2 + \| \Pi_h v_0 \|_A^2
\lesssim \| \tilde{v} \|_D^2 + \| v_0 \|_{A_0}^2
\leq \| \tilde{v} \|_D^2 + \lambda_{\max}(B_0 A_0) \| v_0 \|_{B_0^{-1}}^2.
\end{equation*}

Next, we prove~(b).  
Given $v \in V$, let $v_0 = \Pi_0 v$ and $\tilde{v} = v - \Pi_h \Pi_0 v$.  
Then we get
\begin{equation}
\label{Thm1:aux_grid}
\| \tilde{v} \|_D^2 + \| v_0 \|_{B_0^{-1}}^2 = \| v - \Pi_h \Pi_0 v \|_D^2 + \| \Pi_0 v \|_{B_0^{-1}}^2.
\end{equation}
By the inverse inequality and the finite overlap property, we have
\begin{equation}
\label{Thm2:aux_grid}
\| v - \Pi_h \Pi_0 v \|_D^2 \lesssim h^{-2} \| v - \Pi_h \Pi_0 v \|_{L^2}^2.
\end{equation}
Moreover, using the interpolation estimates~\eqref{aux_grid_interpolation}, we obtain
\begin{equation}
\label{Thm3:aux_grid}
\begin{split}
\| v - \Pi_h \Pi_0 v \|_{L^2}^2
&\lesssim \| v - \Pi_0 v \|_{L^2}^2 + \| \Pi_0 v - \Pi_h \Pi_0 v \|_{L^2}^2 \\
&\lesssim h^2 \| v \|_A^2 + h^2 \| \Pi_0 v \|_{A_0}^2
\lesssim h^2 \| v \|_A^2.
\end{split}
\end{equation}
On the other hand, by~\eqref{aux_grid_interpolation_0}, we have
\begin{equation}
\label{Thm4:aux_grid}
\| \Pi_0 v \|_{B_0^{-1}}^2 
\leq \lambda_{\min}(B_0 A_0)^{-1} \| \Pi_0 v \|_{A_0}^2
\lesssim \lambda_{\min}(B_0 A_0)^{-1} \| v \|_A^2.
\end{equation}
Combining~\eqref{Thm1:aux_grid}--\eqref{Thm4:aux_grid} completes the proof of~(b).
\end{proof}

\cref{Thm:aux_grid} implies that the $A$-problem~\eqref{model} posed on the unstructured grid can be preconditioned as effectively as the corresponding $A_0$-problem defined on the structured auxiliary grid.  
Therefore, we can apply optimal preconditioners originally developed for structured grids~\cite{BPX:1990,LX:2016,Zhang:1992}.

\section{Conclusion}
\label{Sec:Conclusion}
In this paper, we presented the auxiliary space theory, which offers a unified framework for understanding a wide range of advanced techniques in numerical analysis, including subspace correction methods, Hiptmair--Xu preconditioners, and auxiliary grid methods.

A key point we emphasize is that the auxiliary space theory cleanly separates the analysis into two parts: a purely linear algebraic component and a problem-dependent component.
Since the linear algebraic aspects are handled uniformly by the auxiliary space framework, the applications discussed in \cref{Sec:MSC,Sec:HX,Sec:Grid} require only minimal additional effort in terms of algebraic manipulation.
Specifically, the remaining work involves verifying certain problem-specific properties, such as regular decompositions and interpolation estimates, along with some elementary calculations.
We highlight that this separation not only simplifies the analysis but also provides better conceptual understanding.

\bibliographystyle{siamplain}
\bibliography{refs_aux}

\end{document}